\newtheorem{theorem}{Theorem}[section]
\newtheorem{lemma}[theorem]{Lemma}
\newtheorem{proposition}[theorem]{Proposition}
\newtheorem{remarks}[theorem]{Remarks}
\newtheorem{remark}[theorem]{Remark}
\theoremstyle{corollary}
\newtheorem{corollary}[theorem]{Corollary}
\def\N{\mathbb N}
\def\C{\mathbb C}
\def\R{\mathbb R}
\def\D{\mathbb D}
\def\T{\mathbb T}
\def\K{\mathbb K}
\def\ve{\varepsilon}
\def\al{\alpha}
\def\la{\lambda}
\def\dis{\displaystyle}
\def\ovl{\overline}
\date{}
\begin{document}

\title{Lineability criteria, with applications}

\author{Luis Bernal-Gonz\'alez and Manuel Ord\'o\~{n}ez Cabrera}

\maketitle

{\footnotesize  
{\sl
\centerline{Departamento de An\'alisis Matem\'atico. Facultad de Matem\'aticas.}
\centerline{Apdo.~1160. Avda.~Reina Mercedes, 41080 Sevilla, Spain.}
\centerline{E-mails: {\tt lbernal@us.es, cabrera@us.es}}
}}

\begin{abstract}
\noindent Lineability is a property enjoyed by some subsets within a vector space $X$.
A subset $A$ of $X$ is called lineable whenever $A$ contains, except for zero, an infinite dimensional
vector subspace. If, additionally, $X$ is endowed with richer structures, then the more stringent notions
of dense-lineability, maxi\-mal dense-lineability and spaceability
arise naturally. In this paper, se\-ve\-ral lineability
criteria are provided and applied to specific topological vector spaces, mainly function spaces. Sometimes, such criteria
furnish unified proofs of a number of scattered results in the related literature.
Families of strict-order integrable functions, hypercyclic vectors, non-extendable holomorphic mappings, Riemann non-Lebesgue integrable functions,
sequences not satisfying the Lebesgue dominated convergence theorem, nowhere ana\-ly\-tic functions, bounded variation functions, entire functions with fast growth and Peano curves, among others, are ana\-ly\-zed from the point of view of lineability.

\vskip .15cm

\noindent {\sl 2010 Mathematics Subject Classification:} 15A03, 26A46, 28A25, 30B40, 46E10, 46E30, 47A16.

\vskip .15cm

\noindent {\sl Key words and phrases:} Lineability, maximal dense-lineability, spaceability,
strict-order integrability, hypercyclicity,
non-continuable holomorphic functions, fast growth entire functions, Peano curves.
\end{abstract}

\section{Introduction}

\quad In the last two decades there has been a crescent interest in the search of nice
algebraic-topological structures within sets (mainly sets of functions or sequences)
that do not enjoy themselves such structures. This paper wants to contribute to shed light
on this recent trend, by providing a number of general criteria that guarantee the existence of the
mentioned structures, with emphasis in maximal dense-lineability and spaceability. Definitions are
given below. For a recent survey on lineability, see \cite{BPS}.

\vskip .15cm

To this respect, let us recall some recent terminology introduced
in \cite{AGS}, \cite{Bay1}, \cite{Ber2} and \cite{GuQ}. Assume that
$X$ is a vector space (over $\K :=$ the real line $\R$ or the complex plane $\C$)
and that $\alpha$ is a cardinal number. Then a subset $A$ of $X$ is called
\begin{enumerate}
\item[$\bullet$] {\it lineable} if $A \cup \{0\}$ contains an
infinite dimensional vector subspace,
\item[$\bullet$] {\it $\alpha$-lineable} if $A \cup \{0\}$ contains an
$\alpha$-dimensional vector subspace (hence lineable means $\aleph_0$-lineable,
where $\aleph_0 = {\rm card}\,(\N )$ and $\N$ stands for the set of positive integers),
\item[$\bullet$] {\it maximal lineable} if $A$ is ${\rm dim}\,(X)$-lineable.
\end{enumerate}
If, in addition, $X$ is a topological vector space, then we say that $A$ is
\begin{enumerate}
\item[$\bullet$] {\it dense-lineable} or {\it algebraically generic}
whenever $A \cup \{0\}$ contains a dense vector subspace of $X$,
\item[$\bullet$] {\it maximal dense-lineable}
whenever $A \cup \{0\}$ contains a dense vector subspace $M$ of $X$
with dim$\,(M) =$ dim$\,(X)$,
\item[$\bullet$] {\it spaceable} if $A \cup \{0\}$ contains some
infinite dimensional closed vector subspace.
\end{enumerate}

\noindent Other interesting properties --such as algebrability, introduced in
\cite{APS}, additivity, introduced in \cite{Nat1,Nat2} (see also \cite{GaMS}), and moduleability \cite{GPS}--
will not be considered here.
Note that if $X$ is an infinite dimensional separable Baire topological
vector space then $\mathfrak c$, the cardinality of the continuum, is the maximal dimension allowed to
any vector subspace of $X$. In particular, spaceability implies maximal lineability in this case.

\vskip .15cm

In the subsequent sections of this paper, a number of sufficient conditions for
maximal dense-lineability and spaceability will be stated, see Sections 2-3. The results that are obtained
turn to be improvements of known criteria. Finally, in Section 4, our results will be applied to obtain
lineability statements, mainly in the setting of function spaces. It is also shown how a number of known
assertions about lineability can be proved by using our theorems.

\section{Maximal dense-lineability}

\quad Many examples of nonlinear sets containing large vector spaces have been given in the
literature. Perhaps one of the most outstanding is the Herrero-Bourdon theorem (see \cite{Bou,Her})
asserting that the set $HC(T)$ of hypercyclic vectors of a (continuous, linear) operator
$T:X \to X$ on a complex Banach space $X$ is dense-lineable (moreover, the dense subspace
obtained is $T$-invariant; the result was extended by B\`es \cite{Bes} and Wengenroth \cite{Wen}
to any real or complex topological vector space). Recall that an operator $T:X \to X$ is said to
be {\it hypercyclic} whenever it admits a dense orbit, that is, whenever there is a vector $x_0 \in X$
(called hypercyclic for $T$) such that the set $\{T^nx_0: \, n \in \N\}$ is dense in $X$
(see \cite{BaM} and \cite{GrP} for excellent surveys on this subject). Another nice example
was established by Aron, Garc\'{\i}a and Maestre  \cite{AGM} in 2001. Namely, if $G \subset \C$ is a domain
(i.e.~$G$ is nonempty, open and connected) and $H(G)$ is the space of holomorphic functions in $G$
(endowed with the compact-open topology) then Mittag-Leffler discovered in 1884 that the subfamily $H_e(G)$ of functions
which are holomorphic exactly at $G$ --that is, which are non-extendable holomorphically
across the boundary $\partial G$ of $G$-- is nonempty. The authors of  \cite{AGM} showed that $H_e(G)$ is both
dense-lineable and spaceable in $H(G)$ (in fact,
the result is given in \cite{AGM} for domains of holomorphy in $\C^N$).
We will go back on these subjects later.

\vskip .15cm

By adopting a wider point of view, one might believe that large topological size
always entails algebraic genericity (for instance, $HC(T)$ is residual if $T$ is hypercyclic
on an $F$-space $X$ and, as proved by Kierst and Szpilrajn \cite{KiS} in 1933, $H_e(G)$ is residual in $H(G)$).
This is far from being true.
As an example, let $\N_0 := \N \cup \{0\}$ and
$\al = (a_k) \in \C^{\N_0}$ be a sequence with $\limsup_{k \to \infty} |a_k|^{1/k} < +\infty$,
and define the associated diagonal operator $\Delta_\al$ as
$\Delta_\al : \sum_{k=0}^\infty f_k z^k \in H(\C ) \mapsto \sum_{k=0}^\infty a_k f_k z^k \in H(\C )$.
If $\{\al_n = (a_{k,n})_{k \geq 0}: \, n \in \N\}$ is dense in $\C^{\N_0}$ then the set
$A := \{f \in H(\C ): \, (\Delta_{\al_n}f)_{n \ge 1}$ is dense in $H(\C )\}$ is residual
in $H(\C )$, but $A$ is not even $2$-lineable \cite{BCP}.

\vskip .15cm

In 2005, Bayart \cite{Bay1} gave several useful dense-lineability criteria, but focused on
divergence and universality of operators. With the aim to include more general situations,
Aron {\it et al.} \cite{AGPS} and the first author \cite{Ber,Ber2} proved res\-pec\-ti\-ve\-ly the following theorems.
According to \cite{AGPS}, if $A$ and $B$ are subsets of a vector space $X$, then $A$ is said
to be {\it stronger} than $B$ provided that $A + B \subset A$.

\begin{theorem}\label{dense-lineable}
Assume that $X$ is a metrizable separable topological vector space. If \,$A$ and $B$ are subsets
of \,$X$ such that $A$ is lineable, $B$ is dense lineable and $A$ is stronger than $B$,
then $A$ is dense-lineable.
\end{theorem}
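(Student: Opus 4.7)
The plan is to construct explicitly a dense, infinite-dimensional vector subspace $M$ of $X$ contained in $A \cup \{0\}$, by perturbing a dense sequence drawn from $B \cup \{0\}$ by \emph{small} vectors taken from a lineable witness for $A$. The absorption condition $A+B \subset A$ will then keep every linear combination inside $A \cup \{0\}$.

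More precisely, I would first use the lineability of $A$ to pick an infinite-dimensional subspace $V \subset A \cup \{0\}$ together with a linearly independent sequence $(v_n)_{n\ge 1}$ in $V$. From the dense-lineability of $B$ I obtain a dense vector subspace $W \subset B \cup \{0\}$, and since $X$ is separable and metrizable, $W$ contains a countable dense subset whose linear span $W_0$ is a countable-dimensional dense subspace of $X$ contained in $B \cup \{0\}$. Enumerate a dense sequence $(b_n)_{n\ge 1}$ in $W_0 \setminus \{0\}$. Using the metric on $X$ and continuity of scalar multiplication at $0$, I then choose nonzero scalars $\ve_n$ with $\ve_n v_n \to 0$, set $u_n := \ve_n v_n + b_n$, and define the candidate $M := \mathrm{span}\,\{u_n : n\ge 1\}$.

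Three things must be checked. For $M \subset A \cup \{0\}$: any nonzero $m \in M$ can be written as a finite sum $\sum_i \alpha_i u_i = \sum_i \alpha_i \ve_i v_i + \sum_i \alpha_i b_i$; linear independence of $(v_n)$ and $\ve_n \neq 0$ force the first summand to be a nonzero element of $V$, hence of $A$, while the second summand lies in $W_0 \subset B \cup \{0\}$, so the hypothesis $A+B \subset A$ (together with $A + \{0\} = A$) places the whole sum in $A$. For density: $u_n - b_n = \ve_n v_n \to 0$ and $(b_n)$ is dense, so each $x \in X$ is a limit of some subsequence $(u_{n_k})$. For infinite dimensionality: in a metrizable (hence Hausdorff) topological vector space every finite-dimensional subspace is closed, so a dense finite-dimensional $M$ would coincide with $X$, which is impossible because $W_0 \subset X$ is already infinite-dimensional.

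The only genuine obstacle is exactly what makes the naive choice $M = V + W_0$ fail: a pure element $w \in W_0 \setminus \{0\}$ lies in $B$ but need not lie in $A$, so $V + W_0$ generally escapes $A \cup \{0\}$. The perturbation $u_n = \ve_n v_n + b_n$ is designed precisely so that every nonzero vector of $M$ carries a nonzero $V$-component, which is what triggers $A + B \subset A$; the scaling $\ve_n v_n \to 0$ is what buys back the density lost by this perturbation. Once this trick is in place, the rest is routine.
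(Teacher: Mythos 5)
Your construction is correct and is essentially the paper's own scheme (used there to prove the stronger Theorem \ref{maximal dense-lineable}): perturb a dense family taken from the subspace inside $B\cup\{0\}$ by small multiples of linearly independent vectors from the lineable witness for $A$, and let $A+B\subset A$ absorb every nonzero combination. Your extra remark that a dense finite-dimensional subspace would be closed and hence all of $X$ neatly settles infinite-dimensionality without the disjointness hypothesis the paper needs only for its dimension count, so nothing is missing.
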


\begin{theorem}\label{dense-lineable-bis}
Assume that $X$ is a metrizable separable topological vector space.
Suppose that \,$\Gamma$ is a family of vector subspaces
of $X$ such that $\bigcap_{S \in \Gamma} S$ is dense in $X$. We have:
\begin{enumerate}
\item[\rm (a)] If $\al$ is an infinite cardinal number such that $\bigcap_{S \in \Gamma} (E \setminus S)$
is $\al$-lineable then it contains, except for zero, a dense vector subspace of dimension $\al$.
\item[\rm (b)] In particular, if \,$\bigcap_{S \in \Gamma} (E \setminus S)$ is lineable then it is
dense-lineable. And if \,$\bigcap_{S \in \Gamma} (E \setminus S)$ is maximal lineable then it
is maximal dense-lineable.
\end{enumerate}
\end{theorem}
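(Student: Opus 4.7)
Write $A:=\bigcap_{S\in\Gamma}(X\setminus S)$ and $B:=\bigcap_{S\in\Gamma}S$ (reading the statement's set as $X\setminus S$). Two algebraic observations will drive everything: $B$ is a dense vector subspace (intersection of subspaces), and, assuming $\Gamma\ne\emptyset$ (the only interesting case), one has both $A+B\subseteq A$ and $A\cap B=\emptyset$ --- indeed, if $a\in A$, $b\in B$, $S\in\Gamma$ and $a+b\in S$, then $a=(a+b)-b\in S$, contradicting $a\in A$. The strategy will be to start from an $\alpha$-dimensional subspace $M\subseteq A\cup\{0\}$ given by $\alpha$-lineability and \emph{perturb} a countable portion of a Hamel basis of $M$ by a linearly independent dense sequence from $B$, producing the sought dense $\alpha$-dimensional subspace.

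First, using separability of $X$ together with density of $B$, I would pick $\{b_n\}\subseteq B$ linearly independent with $N:=\mathrm{span}\{b_n\}$ dense in $X$ (approximate a countable dense subset of $X$ from within $B$, then extract a maximal linearly independent subfamily, necessarily countably infinite since $X$ is infinite-dimensional). Next, fix a Hamel basis $\{e_i\}_{i\in I}$ of $M$ with $|I|=\alpha$, partition $I=I_0\sqcup\{i_n:n\in\N\}$ with $|I_0|=\alpha$, rename $e_n:=e_{i_n}$, and use the metrizability of $X$ to rescale: replacing each $e_n$ by $\lambda_n e_n$ with $\lambda_n>0$ going to $0$ fast enough, I may assume $e_n\to 0$ in $X$. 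Set $f_n:=b_n+e_n\in A+B\subseteq A$ and take
\[
M':=\mathrm{span}\bigl(\{e_i:i\in I_0\}\cup\{f_n:n\in\N\}\bigr).
\]
Every property claimed for $M'$ then hinges on $M\cap B=\{0\}$: any finite relation $\sum c_ie_i+\sum d_nf_n=0$ rearranges to $\sum d_nb_n=-\sum c_ie_i-\sum d_ne_n\in M\cap B=\{0\}$, and linear independence of $\{b_n\}$ forces $d_n=0$ and hence $c_i=0$. This proves the generators of $M'$ are linearly independent (so $\dim M'=|I_0|+\aleph_0=\alpha$) and shows that any nonzero $x\in M'$, uniquely decomposed as $x=m+b$ with $m\in M$ and $b\in N$, satisfies $m\ne 0$; hence $x=m\in A$ (if $b=0$) or $x\in A+B\subseteq A$ (if $b\ne 0$).

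For density, given $x\in X$ and an open $U\ni x$, I would choose a $0$-neighborhood $V$ with $V+V\subseteq U-x$; since $e_n\to 0$, the set $\{n:e_n\in V\}$ is cofinite, while $\{n:b_n\in x+V\}$ is infinite because $X$, being a nontrivial Hausdorff topological vector space, has no isolated points, so the dense sequence $\{b_n\}$ meets the nonempty open set $x+V$ infinitely often. Any $n$ lying in both sets then gives $f_n\in x+V+V\subseteq U$, so $M'\cap U\ne\emptyset$. Part~(b) is immediate by specializing $\alpha=\aleph_0$ or $\alpha=\dim X$. The hardest point will be the tug-of-war between density --- which pushes for nonzero elements of $B$ inside $M'$ --- and the inclusion $M'\setminus\{0\}\subseteq A$ --- which forbids such elements, since $A\cap B=\emptyset$. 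The perturbation $e_n\mapsto b_n+e_n$ threads this needle by using $B$-data only as \emph{shifts} of $M$-basis vectors: the smallness $e_n\to 0$ imports density via $f_n\approx b_n$, while the algebraic separation $M\cap B=\{0\}$ blocks any leakage of $M'$ into $B$.
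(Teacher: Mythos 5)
Your overall strategy is sound and is, in spirit, the same perturbation idea the paper uses (its Theorem \ref{maximal dense-lineable} and Corollary \ref{maximal dense-lineable corollary}): add small vectors from the $\alpha$-lineable set to vectors of a dense subset of $B:=\bigcap_{S\in\Gamma}S$, and exploit $A+B\subseteq A$ together with $A\cap B=\emptyset$. The algebraic part of your argument (independence of the generators of $M'$, $\dim M'=\alpha$, and $M'\setminus\{0\}\subseteq A$) is correct. But there is one concrete gap: the sequence $\{b_n\}$ you construct is \emph{not} dense as a set of points, while your density step needs exactly that. Approximating a countable dense subset of $X$ from within $B$ gives a dense sequence, but ``extracting a maximal linearly independent subfamily'' only preserves density of the \emph{span}, not of the set. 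For instance, in $\ell^2$ one can enumerate the finite rational combinations of the unit vectors $e_k$ (a dense set) in such an order that the greedy extraction returns precisely $\{e_1,e_2,\dots\}$, which is far from dense (no $e_k$ lies within distance $2$ of $3e_1$). For such a $\{b_n\}$ the set $\{n:\,b_n\in x+V\}$ can be empty, and the key step ``pick $n$ with $e_n\in V$ and $b_n\in x+V$'' breaks down; so, as written, the proof of density of $M'$ fails.

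The repair is easy, and worth recording because it clarifies what is really needed. Either (i) choose $b_n$ recursively with $b_n\in G_n\cap B\setminus{\rm span}\{b_1,\dots,b_{n-1}\}$, where $\{G_n\}$ is a countable open basis of $X$: this is possible because $X$ is infinite dimensional (it contains an $\alpha$-dimensional subspace), so each finite-dimensional span is closed with empty interior and $B$ is dense; this yields a sequence that is simultaneously linearly independent and dense, and is exactly the device used in the paper's Theorem \ref{simple dense-lineability}. Or (ii), even simpler, drop the linear-independence requirement on $\{b_n\}$ entirely and take the approximating sequence itself (dense and contained in $B$): your independence computation never really needs independence of the $b_n$'s, since from $\sum_i c_ie_i+\sum_n d_nf_n=0$ one gets $\sum_i c_ie_i+\sum_n d_ne_n=-\sum_n d_nb_n\in M\cap B=\{0\}$, and then independence of the (rescaled) basis vectors $e_i,e_n$ of $M$ already forces all $c_i=d_n=0$. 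With either repair, the rest of your argument — $A+B\subseteq A$, $A\cap B=\emptyset$ for $\Gamma\neq\emptyset$, the rescaling $e_n\to 0$ via metrizability, and the conclusion $f_n=e_n+b_n\in x+V+V\subseteq U$ — goes through, and you recover the paper's statement; the paper's own route differs only in detail, shifting a scaled copy of \emph{every} basis vector $a_i$ into a basic open set $U_{\varphi(i)}$ via a surjection onto the open basis, which avoids any need for a dense sequence in $B$ but is otherwise the same mechanism.
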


The idea which is in the core of both results above is to obtain the desired dense subspace
by adding small vectors coming from a known li\-ne\-a\-ble set to the vectors of a dense subset.
Theorems \ref{dense-lineable} and \ref{dense-lineable-bis} have been used in \cite{AGPS,Ber,Ber2,BeB}
to show the following assertions (each space $C^p[0,1], \, C^\infty [0,1], \, L^p, \, H(\D )$
is endowed with its natural topology):
\begin{enumerate}
\item[$\bullet$] The set $ND[0,1]$ of continuous nowhere differentiable functions on $[0,1]$
as well as the set $DNM[0,1]$ of differentiable nowhere monotone functions on $[0,1]$
are dense in $C[0,1]$ \cite{AGPS}.
\item[$\bullet$] Let $p \in \N_0$. Then the class of functions $f \in C^p[0,1]$ such that
$f^{(p)}$ is nowhere differentiable on $[0,1]$ is dense-lineable in $C^p[0,1]$ \cite{AGPS,Ber}.
\item[$\bullet$] The set of $C^\infty$-functions on $[0,1]$ which are nowhere analytic is
dense-lineable in $C^\infty [0,1]$ \cite{AGPS,Ber}.
\item[$\bullet$] Let $(\Omega, {\cal M}, \mu )$ be a measure space and $p \in [1,\infty )$ such that
the Lebesgue space $L^p := L^p(\mu , \Omega )$ is {\it separable}. Denote
$L^p_{r\hbox{-}strict}
:= L^p \setminus \bigcup_{q \in (p,\infty ]} L^q$ $(p \ge 1)$, $L^p_{l\hbox{-}strict}
:= L^p \setminus \bigcup_{q \in [1,p)} L^q$ $(p > 1)$, and $L^p_{strict}
:= L^p \setminus \bigcup_{q \in [1,\infty ] \setminus \{p\}} L^q$ $(p > 1)$. We have: \hfil\break
\phantom{a} $\blacktriangleright$ \,$L^p_{r\hbox{-}strict}$ is maximal dense-lineable if \hfil\break
    \phantom{aaaaaaaaaaaaa} $\inf \{\mu (S): \, S \in {\cal M}, \, \mu (S) > 0\} = 0$. \hfill $[\alpha ]$ \hfil\break
\phantom{a} $\blacktriangleright$ \,$L^p_{l\hbox{-}strict}$ is maximal dense-lineable if \hfil\break
    \phantom{aaaaaaaaaaaaa} $\sup \{\mu (S): \, S \in {\cal M}, \, \mu (S) < \infty\} = \infty$. \hfill $[\beta ]$ \hfil\break
\phantom{a} $\blacktriangleright$ \,$L^p_{strict}$ is maximal dense-lineable if both $[\al ]$ and $[\beta ]$ hold. \hfil\break
(In fact, conditions
$[\al ]$, $[\beta ]$, $[\al ] + [\beta ]$ are respectively ne\-ce\-s\-sa\-ry in the just mentioned assertions,
because they are respectively equivalent to the non-vacuousness of $L^p_{l\hbox{-}strict}, \, L^p_{l\hbox{-}strict}, \, L^p_{strict}$,
thanks to a result by Romero \cite{Rom} and Subramanian \cite{Sub}; see also
\cite[Section 14.8]{Nie} and \cite[proof of Theorem 3.4]{Ber2}.)
In particular, for the Lebesgue measure on $[0,1]$ we obtain for all $p > 1$ that $L^p[0,1] \setminus \bigcup_{q \in [1,p)}$
is maximal dense-lineable \cite{Ber2} (see also \cite{AGPS} and \cite{MPPS}).
\item[$\bullet$] Let $\D := \{z \in \C : \, |z| < 1\}$, the open unit disc. The set of functions
$f \in H(\D )$ that are strongly annular (i.e.~$\limsup_{r \to 1} \min \{|f(z)|: \, |z| = r\} = +\infty$)
is maximal dense-lineable \cite{BeB}.
\end{enumerate}

The approach of Theorems \ref{dense-lineable}--\ref{dense-lineable-bis} can be used
to discover (maximal) dense-lineability in more (already known or new) cases. In order to undertake
the task in a more systematic way, we are going to strengthen the above theorems. Note that in the following Theorem
\ref{maximal dense-lineable} and Corollary \ref{maximal dense-lineable corollary} {\it neither metrizability
nor separability} are needed as a general assumption. On the contrary, we need disjointness of the subsets
$A, \, B$ in order to estimate the dimension of the subspaces obtained.

\begin{theorem}\label{maximal dense-lineable}
Assume that $X$ is a topological vector space. Let $A \subset X$. Suppose that there exists a subset $B \subset X$
such that $A$ is stronger than $B$ and $B$ is dense-lineable. We have:
\begin{enumerate}
\item[\rm (a)] If $A$ is $\al$-lineable and $X$ has an open basis $\cal B$ for its to\-po\-lo\-gy
such that ${\rm card} ({\cal B}) \le \al$, then $A$ is dense-lineable. If, in addition, $A \cap B = \emptyset$,
then $A \cup \{0\}$ contains a dense vector space $D$ with ${\rm dim} (D) = \al$.
\item[\rm (b)] If $X$ is metrizable and separable and $\al$ is an infinite cardinal number such that $A$
is $\al$-lineable, and $A \cap B = \emptyset$, then $A \cup \{0\}$ contains a dense vector space $D$ with ${\rm dim} (D) = \al$.
\item[\rm (c)] If the origin possesses a fundamental system $\cal U$ of neighborhoods with
${\rm card} ({\cal U}) \le {\rm dim} (X)$, $A$ is maximal lineable and $A \cap B = \emptyset$,
then $A$ is maximal dense-lineable. In particular, the same conclusion follows if $X$ is metrizable,
$A$ is maximal lineable and $A \cap B = \emptyset$.
\end{enumerate}
\end{theorem}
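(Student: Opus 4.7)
The plan is to adapt the perturbation scheme underlying Theorem~\ref{dense-lineable} to transfinite index sets. Fix once and for all an $\alpha$-dimensional subspace $M\subset A\cup\{0\}$ with Hamel basis $\{e_\mu:\mu\in I\}$, $|I|=\alpha$, together with a dense vector subspace $N\subset B\cup\{0\}$ provided by the dense-lineability of $B$. When $A\cap B=\emptyset$ one automatically has $M\cap N=\{0\}$, and this is what will force the final dimension to reach $\alpha$.

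For part (a), write the given basis as ${\cal B}=\{U_\lambda:\lambda\in\Lambda\}$, $|\Lambda|\le\alpha$, and fix an injection $\Lambda\hookrightarrow I$ with $|I\setminus\Lambda|=\alpha$ (possible since $\alpha$ is infinite); relabel so that $\Lambda\subset I$. Density of $N$ provides $b_\lambda\in N\cap U_\lambda$, and continuity of $t\mapsto b_\lambda+te_\lambda$ at $t=0$ a nonzero scalar $t_\lambda$ with $a_\lambda:=b_\lambda+t_\lambda e_\lambda\in U_\lambda$. Set $a_\mu:=e_\mu$ for $\mu\in I\setminus\Lambda$ and $D:=\mathrm{span}\{a_\mu:\mu\in I\}$. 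Any finite combination $v=\sum_\mu c_\mu a_\mu$ splits as $v=m+n$ with $n=\sum_{\lambda\in\Lambda}c_\lambda b_\lambda\in N$ and
\[
m=\sum_{\lambda\in\Lambda}c_\lambda t_\lambda e_\lambda+\sum_{\mu\in I\setminus\Lambda}c_\mu e_\mu\in M.
\]
Since $\{e_\mu\}$ is a Hamel basis and each $t_\lambda\ne 0$, the equation $m=0$ forces every $c_\mu=0$; hence $v\ne 0\Rightarrow m\ne 0$, giving $m\in M\setminus\{0\}\subset A$ and $v=m+n\in A+(B\cup\{0\})\subset A$, so $D\subset A\cup\{0\}$. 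Density of $D$ is immediate from $a_\lambda\in U_\lambda$ and the basis property of $\{U_\lambda\}$. For the first conclusion, $\{e_\mu:\mu\in I\setminus\Lambda\}\subset D$ is a linearly independent set of cardinality $\alpha$, so $D$ is infinite-dimensional and $A$ is dense-lineable. Under $A\cap B=\emptyset$ one further has $M\cap N=\{0\}$, and then $v=0$ becomes $m=-n\in M\cap N=\{0\}$, forcing $m=0$ and every $c_\mu=0$, so $\{a_\mu\}_{\mu\in I}$ is linearly independent and $\dim D=\alpha$.

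Part (b) is an immediate corollary: a metrizable separable TVS is second countable and so admits an open basis of cardinality $\aleph_0\le\alpha$, bringing one back to (a). For part (c), the obstacle is that no open basis of $X$ of cardinality $\le\dim X$ is assumed, and I will manufacture one from the local data. Pick a Hamel basis $\{n_\beta:\beta\in J\}$ of $N$ and let $N_0$ consist of all finite $\Q$-linear (or $\Q(i)$-linear, when $\K=\C$) combinations of the $n_\beta$; density of $\Q$ in $\K$ together with continuity of the vector-space operations forces $N_0$ to be dense in $N$, hence in $X$, while $|N_0|\le\max(|J|,\aleph_0)\le\dim X$. Replacing each $U\in{\cal U}$ by $U\cap(-U)$ to ensure symmetry, a standard TVS argument shows that $\{x+U:x\in N_0,\,U\in{\cal U}\}$ is an open basis of $X$ of cardinality at most $\dim X\cdot\dim X=\dim X$. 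Applying (a) with $\alpha=\dim X$ and this basis delivers the maximal dense-lineability of $A$; the final, metrizable statement is the particular case $|{\cal U}|=\aleph_0$.

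The main technical point, and the only real obstacle, is the bookkeeping in (a): the injection $\Lambda\hookrightarrow I$ must separate the ``corrected'' basis vectors $\{e_\lambda\}_{\lambda\in\Lambda}$ from the ``untouched'' ones $\{e_\mu\}_{\mu\in I\setminus\Lambda}$, and the implication $m=0\Rightarrow$ all $c_\mu=0$ (which simultaneously yields $D\subset A\cup\{0\}$, density of $D$, and the dimension count) depends on this separation together with the nonvanishing of the scalars $t_\lambda$. Once (a) is in hand, parts (b) and (c) are straightforward reductions via an appropriate open basis of $X$.
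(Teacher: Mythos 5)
Your argument is correct and is essentially the paper's own proof: you perturb the basis vectors of an $\alpha$-dimensional subspace of $A\cup\{0\}$ by vectors of the dense subspace $N\subset B\cup\{0\}$ chosen inside the basic open sets, use $A+B\subset A$ to keep the resulting dense span inside $A\cup\{0\}$ and $A\cap B=\emptyset$ for the dimension count, and in (c) you manufacture a basis of cardinality at most ${\rm dim}\,(X)$ by translating the fundamental system $\cal U$ by a rational-coefficient dense set --- exactly the paper's device, except that you rationalize a Hamel basis of $N$ where the paper rationalizes one of $X$. Two minor loose ends: your bookkeeping tacitly assumes the relevant cardinal is infinite (the choice of $\Lambda\subset I$ with ${\rm card}\,(I\setminus\Lambda)=\alpha$ in (a), and ${\rm dim}\,(X)\cdot{\rm dim}\,(X)={\rm dim}\,(X)$ in (c)), and in the metrizable part of (c) the degenerate case ${\rm dim}\,(X)<\aleph_0$ escapes your reduction since then ${\rm card}\,({\cal U})=\aleph_0>{\rm dim}\,(X)$; the paper disposes of it in one line, because maximal lineability then forces $A\cup\{0\}=X$. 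Incidentally, your variant of leaving $\alpha$-many basis vectors unperturbed (only ${\rm card}\,({\cal B})$-many get corrected) actually yields ${\rm dim}\,(D)=\alpha$ in (a) even without assuming $A\cap B=\emptyset$, since the untouched independent vectors already lie in $D$ --- a small strengthening that the paper's construction, which perturbs every basis vector, cannot give directly.
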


\begin{proof}
Observe that (b) is derived from (a) because if $X$ is metrizable and separable then it is second countable,
hence it has a countable open basis $\cal B$ for its to\-po\-lo\-gy. Therefore
${\rm card} ({\cal B}) = {\rm card} (\N ) = \aleph_0 \le \alpha$ because $\al$ is infinite, and (a) applies.

\vskip .15cm

Let us show that (c) is also a consequence of (a). For this, assume that $A, \, B$ and $\cal U$ are as in the
hypothesis of (c). Let $C$ denote a dense countable subset of $\K$, and let $\{u_i\}_{i \in I}$ an algebraic
basis of $X$, so that ${\rm card} (I) = {\rm dim} (X)$. Denote by ${\cal P}_f (I)$ the family of
nonempty finite subsets
of $I$. Since ${\rm card} ({\cal U}) \le {\rm dim} (X)$, we must have that ${\rm dim} (X)$ is not finite,
hence ${\rm card} ({\cal P}_f(I)) = {\rm card} (I) = {\rm dim} (X) \ge {\rm card} (C)$. Moreover,
${\rm card} (I^F) = {\rm card} (I)$ for any nonempty finite set $F$, and ${\rm card} (C \times I) =
{\rm card} (I)$. Now, it is easy to see that the family
$$
{\cal B} := \left\{U + \sum_{i \in F} \al_i u_i: \, U \in {\cal U}, \, \al_i \in C \hbox{ for all } i \in F, \, F \in {\cal P}_f (I) \right\}
$$
is an open basis for the topology of $X$. We have that
$$
{\rm card} ({\cal B}) \le
{\rm card } \big({\cal U} \times \bigcup_{F \in {\cal P}_f(I)} (C \times I)^F \big)
= {\rm card } \big({\cal U} \times \bigcup_{F \in {\cal P}_f(I)} I^F \big)
$$
$$
\le {\rm card } ({\cal U} \times {\cal P}_f(I) \times I)
= {\rm card } ({\cal U} \times I \times I) = \max \{{\rm card} ({\cal U}),{\rm card} (I)\} = {\rm dim} (X).
$$
Since $A$ is ${\rm dim} (X)$-lineable, by applying (a) again we obtain the first part of (c).
As for the second part, simply observe that if $X$ is metrizable then $\cal U$ can be chosen countable,
so ${\rm card} ({\cal U}) \le {\rm dim} (X)$ if ${\rm dim} (X)$ is infinite.
If ${\rm dim} (X)$ is finite then the conclusion is evident because $A \cup \{0\} = X$; indeed, every
vector subspace $M$ of a finite dimensional vector space $X$ such that ${\rm dim} (M) = {\rm dim} (X)$
must equal $X$.

\vskip .15cm

Thus, our only task is to prove (a). Suppose that $A$ is $\al$-lineable and that ${\rm card} ({\cal B}) \le \al$
for some open basis $\cal B$ of $X$. We are also assuming that $A+B \subset A$ and $B$ is dense-lineable.
It follows that there exist vector spaces $A_1,B_1$ such that $A_1 \subset A \cup \{0\}$, $B_1 \subset B \cup \{0\}$,
$B_1$ is dense in $X$ and ${\rm dim} (A_1) = \al \ge {\rm card} ({\cal B})$. Hence there are sets $I,J$, vectors
$a_i$ $(i \in I)$ and open sets $U_j$ $(j \in J)$, such that ${\rm card} (I) = \al$, $\{a_i\}_{i \in I}$
is a linearly independent system contained in $A_1$, ${\cal B} = \{U_j\}_{j \in J}$ and there exists a
surjective mapping $\varphi : I \to J$. By density, we can assign to each $j \in J$ a vector $b_j \in U_j \cap B_1$.
Fix $j \in J$. As $U_j-b_j$ is a neighborhood of $0$ and multiplication by scalars is continuous on $X$,
for each $i \in \varphi^{-1}(\{j\})$ there is $\ve_i > 0$ satisfying $\ve_i a_i \in U_j - b_j$, or $\ve_ia_i+b_j \in U_j$.
Define
$$
D := {\rm span} \, \{\ve_i a_i + b_{\varphi (i)}: \, i \in I\}.
$$
Then $D$ is a vector subspace of $X$. Since $\varphi$ is surjective, we can pick for each $j \in J$ and
index $i(j) \in I$ with $\varphi (i(j)) = j$. As $\{U_j\}_{j \in J}$ is an open basis and $v_{i(j)}a_{i(j)}+b_j \in U_j$
$(j \in J)$, these vectors form a dense subset of $X$. But $D$ contains these vectors, so $D$ is also dense.
Furthermore, if $x \in D$ then there are $p \in \N$, $(\la_1, \dots ,\la_p) \in {\K}^p \setminus \{(0, \dots ,0)\}$
and $i_1, \dots ,i_p \in I$ with
$$
x = \la_1 \ve_{i_1} a_{i_1} + \cdots + \la_p \ve_{i_p} a_{i_p} + \la_1 b_{\varphi (i_1)} + \cdots + \la_p b_{\varphi (i_p)}.
$$
Let define $u := \la_1 \ve_{i_1} a_{i_1} + \cdots + \la_p \ve_{i_p} a_{i_p}$ and
$y := \la_1 b_{\varphi (i_1)} + \cdots + \la_p b_{\varphi (i_p)}$. Then $y \in B_1 \subset B \cup \{0\}$,
and $u \in A_1 \setminus \{0\}$ because of the linear independence of the $a_i$'s. Hence
$u \in A$ and
$$x = u + y \in A + (B \cup \{0\}) \subset A \cup A = A.$$
Consequently, $D \setminus \{0\} \subset A$ and $A$ is dense-lineable.

\vskip .15cm

Finally, we suppose further that $A \cap B = \emptyset$. We want to prove that ${\rm dim} (D) = \al$ or, that is the same,
the vectors $x_i := \ve_i a_i + b_{\varphi (i)}$ $(i \in I)$ are linearly independent. With this aim,
consider a $p \in \N$ and two $p$-tuples $(\la_1, \dots ,\la_p) \in {\K}^p$ and $(i_1, \dots ,i_p) \in I^p$
such that $\sum_{j=1}^p \la_j x_{i_j} = 0$. Assume, by way of contradiction, that
$(\la_1, \dots ,\la_p) \ne (0, \dots ,0)$. Then $u+y=0$, where $u$ and $y$ are as in the preceding paragraph.
Hence $y \in A$ (because $y = -u \in A_1 \setminus \{0\} \subset A$) and $y \in B$
(because $y = -u \ne 0$, so $y \in B_1 \setminus \{0\} \subset B$), which implies $A \cap B \ne \emptyset$.
This contradicts the assumption $A \cap B = \emptyset$, and we are done.
\end{proof}

\begin{corollary} \label{maximal dense-lineable corollary}
Let $X$ be a topological vector space. Suppose that $\Gamma$ is a family of vector subspaces of $X$ such that
$\bigcap_{S \in \Gamma} S$ is dense in $X$. We have:
\begin{enumerate}
\item[\rm (a)] If \,$\bigcap_{S \in \Gamma} (X \setminus S)$ is $\al$-lineable and $X$ has an open basis
$\cal B$ for its to\-po\-lo\-gy such that ${\rm card} ({\cal B}) \le \al$ then $\bigcap_{S \in \Gamma} (X \setminus S)$
is dense-lineable
and, moreover, it contains a dense vector space $D$ with ${\rm dim} (D) = \al$.
\item[\rm (b)] If \,$X$ is metrizable and separable and $\al$ is an infinite cardinal number such that
$\bigcap_{S \in \Gamma} (X \setminus S)$ is $\al$-lineable, then $\bigcap_{S \in \Gamma} (X \setminus S)$ contains,
except for zero, a dense vector space $D$ with ${\rm dim} (D) = \al$.
\item[\rm (c)] If the origin possesses a fundamental system $\cal U$ of neighborhoods with ${\rm card} ({\cal U})
\le {\rm dim} (X)$ then $\bigcap_{S \in \Gamma} (X \setminus S)$ is maximal dense-lineable. The same conclusion
holds if $X$ is metrizable and $\bigcap_{S \in \Gamma} (X \setminus S)$ is maximal lineable.
\end{enumerate}
\end{corollary}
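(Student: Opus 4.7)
The plan is to derive the corollary directly from Theorem~\ref{maximal dense-lineable} by taking $A := \bigcap_{S \in \Gamma}(X \setminus S)$ and the auxiliary set $B := \bigcap_{S \in \Gamma} S$; the three parts (a), (b), (c) of the corollary will then correspond exactly to the three parts of the theorem. First I would verify the hypotheses linking $A$ and $B$. The set $B$ is an intersection of vector subspaces, hence a vector subspace of $X$, and it is dense by assumption, so $B$ is itself a witness to its own dense-lineability. For the condition that $A$ is stronger than $B$, pick $a \in A$ and $b \in B$: for every $S \in \Gamma$ we have $b \in S$ and $a \notin S$, and were $a+b \in S$ the subspace property would force $a = (a+b)-b \in S$, a contradiction; thus $a+b \in X \setminus S$ for every $S$, and $a+b \in A$. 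For disjointness, the only nontrivial case is $\Gamma \neq \emptyset$ (if $\Gamma = \emptyset$ then $A = X = B$ and the conclusions are vacuous); then any $x \in A \cap B$ would lie in $S$ and in $X \setminus S$ simultaneously, for any fixed $S \in \Gamma$.

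With these three facts in hand, parts (a), (b), (c) follow by invoking the homonymous parts of Theorem~\ref{maximal dense-lineable}. The $\al$-lineability, open-basis, and neighborhood-base cardinality hypotheses transfer verbatim from the corollary to the theorem. In part (a), the strengthened conclusion about the existence of a dense vector subspace $D$ of dimension $\al$ is available because the disjointness $A \cap B = \emptyset$ established above is precisely what activates the ``if, in addition'' clause of the theorem; the same remark applies to parts (b) and (c).

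I do not anticipate a genuine obstacle: the corollary is essentially a specialization of Theorem~\ref{maximal dense-lineable} to complements of a family of subspaces, and the only substantive verification is the inclusion $A + B \subset A$. That step is where the hypothesis that every $S \in \Gamma$ is a vector subspace plays an indispensable role, through the closure of $S$ under subtraction; without this, neither ``$A$ stronger than $B$'' nor the subsequent appeal to the theorem would go through.
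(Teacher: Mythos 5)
Your proposal is correct and follows essentially the same route as the paper: take $A := \bigcap_{S \in \Gamma}(X \setminus S)$, $B := \bigcap_{S \in \Gamma} S$, note that $B$ is a dense vector subspace (hence dense-lineable), verify $A+B \subset A$ using that each $S$ is closed under subtraction, check $A \cap B = \emptyset$, and invoke the corresponding parts of Theorem~\ref{maximal dense-lineable}. The only difference is cosmetic (you argue $a+b \notin S$ directly, the paper argues by contradiction from $z \notin A$), and your explicit remark on the degenerate case $\Gamma = \emptyset$ is an implicit assumption the paper also makes.
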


\begin{proof}
In order to apply Theorem  \ref{maximal dense-lineable}, it is enough to check that
$A := \bigcap_{S \in \Gamma} (X \setminus S)$ is stronger than $B := \bigcap_{S \in \Gamma} S$,
that $B$ is dense-lineable and that $A \cap B = \emptyset$. The last property is obvious,
whereas the dense-lineability of $B$ is trivial in view of its denseness and the fact that
$B$ is itself a vector space. As for the property $A + B \subset A$, consider $x \in A$, $y \in B$
and $z := x+y$. If $z \notin A$ then there exists $S \in \Gamma$ with $z \in S$. Then
$$x = z + (-y) \subset S - B \subset S - S = S$$
as $S$ is a vector subspace. Thus $x \notin A$,
a contradiction, which concludes the proof.
\end{proof}

Note that the same technique shows that the first part of Theorem \ref{dense-lineable-bis}(b) is a special
instance of Theorem 2.1.

\vskip .15cm

We want to conclude this section by establishing a simple characterization of dense-lineability for the complement
of a subspace.

\begin{theorem} \label{simple dense-lineability}
Let $X$ be a metrizable separable topological vector space and $Y$ be a vector subspace of \,$X$. If \,$X \setminus Y$ is
lineable then \,$X \setminus Y$ is dense-lineable. Consequently, both properties of lineability and dense-lineability
for $X \setminus Y$ are equivalent provided that \,$X$ has infinite dimension.
\end{theorem}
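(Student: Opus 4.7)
The approach is a direct inductive construction rather than an appeal to Theorem~\ref{maximal dense-lineable} or Corollary~\ref{maximal dense-lineable corollary}: to use those with $A=X\setminus Y$ one needs an auxiliary dense-lineable $B$ with $A+B\subset A$, but a quick check shows this forces $B\subset Y$ (if $b\in B\setminus Y$, then $-b\in X\setminus Y=A$ yet $(-b)+b=0\notin A$), which is incompatible with $B$ being dense whenever $Y$ itself fails to be dense. So I work directly from the definitions, exploiting the extra algebraic content hidden in the hypothesis.

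The preliminary observation is that lineability of $X\setminus Y$ implies $Y$ has infinite codimension in $X$: any $\aleph_0$-dimensional subspace $V\subset(X\setminus Y)\cup\{0\}$ satisfies $V\cap Y=\{0\}$, so the canonical projection $X\to X/Y$ is injective on $V$, forcing $\dim(X/Y)\geq\aleph_0$. Consequently, for every finite-dimensional $F\subset X$ the subspace $Y+F$ still has infinite codimension, so it is a proper subspace of $X$ and therefore has empty interior (as every proper subspace of a topological vector space does).

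Now fix a countable open base $\{U_n\}_{n\geq 1}$ of $X$, available from metrizability and separability. I would construct inductively vectors $b_n\in U_n$ such that the family $\{b_1,\dots,b_n\}$ is linearly independent modulo $Y$, i.e., $\mathrm{span}(b_1,\dots,b_n)\cap Y=\{0\}$: at step $n$ the subspace $W_{n-1}:=Y+\mathrm{span}(b_1,\dots,b_{n-1})$ has empty interior by the previous paragraph, hence $U_n\setminus W_{n-1}\neq\emptyset$ and any vector there serves as $b_n$. The span $D:=\mathrm{span}\{b_n:n\in\N\}$ is then $\aleph_0$-dimensional, satisfies $D\cap Y=\{0\}$ by construction (so $D\setminus\{0\}\subset X\setminus Y$), and is dense in $X$ because it meets every basic open set. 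This establishes dense-lineability. For the final sentence, dense-lineability trivially implies lineability once $X$ is infinite-dimensional, since in a Hausdorff topological vector space every dense subspace must be infinite-dimensional (finite-dimensional subspaces are closed).

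The only non-routine step is the preliminary reduction ``lineable $\Rightarrow$ infinite codimension of $Y$''; after that, the inductive construction is driven entirely by the standard fact that proper subspaces of a topological vector space have empty interior, and the verification at each stage is bookkeeping.
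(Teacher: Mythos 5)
Your proposal is correct and follows essentially the same route as the paper's own proof: your recursive choice of $b_n\in U_n\setminus\bigl(Y+\mathrm{span}(b_1,\dots,b_{n-1})\bigr)$ is exactly the paper's choice of $x_n\in G_n\setminus\mathrm{span}\,(Y\cup\{x_1,\dots,x_{n-1}\})$, resting on the same two facts that lineability of $X\setminus Y$ forces $Y$ to have infinite codimension and that a proper subspace of a topological vector space has empty interior. Your opening remark on why Theorem~\ref{maximal dense-lineable} is not the right tool here and your justification of the final equivalence are sensible additions, but the core argument coincides with the paper's.
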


\begin{proof}
It is evident that $X \setminus Y$ is lineable if and only $Y$ has infinite algebraic codimension.
The assumptions imply that $X$ has a countable open basis $\{G_n: \, n \ge 1\}$. Assume that $X \setminus Y$ is lineable.
In particular, $Y \subsetneq X$. Then $Y^0 = \emptyset$, hence $X \setminus Y$ is dense. Therefore there is
$x_1 \in G_1 \setminus Y$. Since codim$(Y) = \infty$, we have ${\rm span} (Y \cup \{x_1\}) \subsetneq X$.
Then $({\rm span} (Y \cup \{x_1\}))^0 = \emptyset$.
It follows that there exists $x_2 \in G_2 \setminus {\rm span} (Y \cup \{x_1\})$. With this procedure, we get
recursively a sequence of vectors $\{x_n\}_{n \ge 1}$ satisfying
$$x_n \in G_n \setminus {\rm span} (Y \cup \{x_1,...,x_{n-1}\}) \quad (n \ge 1).$$
In particular, the set $\{x_n: \, n \ge 1\}$ is dense. Now, if we define $M := {\rm span} \{x_n: \, n \ge 1\}$
then $M$ is a dense vector space and $M \setminus \{0\} \subset X \setminus Y$.
\end{proof}

Plainly, the scope of this result is shorter than that of the remaining criteria of this paper, so its use yields
weaker assertions on lineability in the diverse examples given in Section 4. Yet Theorem \ref{simple dense-lineability}
is easy to apply. For instance, the set $A := \{f \in C(\R ): \, f$ is unbounded$\}$ is dense-lineable because
$A = X \setminus Y$ with $X = C(\R )$, $Y = \{$bounded continuous functions $\R \to \R\}$, $Y$ is a vector subspace
and $A$ is lineable; indeed, $A$ contains the vector space of all non-zero polynomials $P$ with $P(0)=0$.

\begin{remark} \label{Remark-nonseparable}
{\rm A partial complement of Theorem \ref{simple dense-lineability} is possible in the non-separable case. Namely,
by assuming the Continuum Hypothesis, we have: \hfil\break
{\it \phantom{aaaa} Let \,$X$ be a non-separable F-space and \,$Y$ be a closed separable \hfil\break
\phantom{aaaa} vector subspace of \,$X$. Then \,$X \setminus Y$ is maximal lineable.} \hfil\break
Indeed, let $Z$ be a vector space that is an algebraic complement of $Y$, so that $Z \setminus \{0\} \subset X \setminus Y$.
Note that ${\rm dim} \,(Y) \le \mathfrak{c} \le {\rm dim} \,(X) = {\rm dim} \,(Y) + {\rm dim} \,(Z)$.
If ${\rm dim} \, (Z) \le \aleph_0$ then $Z$, and so $X$ ($= Y + Z$), would be separable (a contradiction).
Hence ${\rm dim} \, (Z) \ge \mathfrak{c}$, which implies ${\rm dim} \, (Z) ={\rm dim} \, (X)$, and we are done.}
\end{remark}

\section{Spaceability}

\quad Up to date, there not exist many explicit general criteria of existence of large closed subspaces
within a subset of a topological vector space. In fact, most spaceability proofs on specific settings
have been done directly and constructively.

One has to go back to Wilansky (\cite{Wil}, 1975) to find what maybe was the first general criterium. He proved that
{\it if \,$Y$ is a closed vector subspace of a Banach space $X$,
then $X \setminus Y$ is spaceable if and only if \,$Y$ has infinite codimension}
(compare to Theorem \ref{simple dense-lineability}).
An improved version of this result,
where $X$ is allowed to be a Fr\'echet space, is ascribed by Kitson and Timoney
\cite[Theorem 2.2]{KitT} to Kalton. The authors of \cite{KitT} exploit it to obtain the
following assertion (see \cite[Theorem 3.3]{KitT}).

\begin{theorem} \label{Kitson-Timoney}
Let $Z_n$ $(n \in \N )$ be Banach spaces and $X$ a Fr\'echet space. Let $T_n : Z_n \to X$ be continuous linear mappings and $Y$ the linear
span of \,$\bigcup_n T_n(Z_n)$. If \,$Y$ is not closed in $X$ then the complement $X \setminus Y$ is spaceable.
\end{theorem}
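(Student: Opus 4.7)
The plan is to reduce the statement to Kalton's theorem \cite[Theorem 2.2]{KitT}, cited just above, via a two-step argument that combines passing to a suitable Fr\'echet subspace with a Baire category construction of a basic sequence.

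First I would dispose of the ``big codimension'' case. If the closed subspace $\overline{Y} \subset X$ has infinite codimension in $X$, then Kalton's theorem directly furnishes a closed infinite-dimensional subspace $M \subset X$ with $M \cap \overline{Y} = \{0\}$; since $Y \subset \overline{Y}$, this $M$ witnesses the spaceability of $X \setminus Y$. So we may assume $\overline{Y}$ has finite codimension in $X$. Replacing $X$ by the Fr\'echet space $\overline{Y}$ (and noting that a closed infinite-dimensional subspace of $\overline{Y}$ disjoint from $Y \setminus \{0\}$ is automatically closed in $X$ as well), we may assume $Y$ is dense in $X$; since $Y$ is not closed, $Y$ is a \emph{proper} dense subspace.

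Second, I would observe that $Y$ is meager in $X$. Setting $Y_k := T_1(Z_1) + \dots + T_k(Z_k)$, we have $Y = \bigcup_{k \ge 1} Y_k$, and each $Y_k$ is the range of a continuous linear map $S_k$ from the Banach space $Z_1 \oplus \dots \oplus Z_k$ (with any reasonable direct-sum norm) into the Fr\'echet space $X$. By the standard variant of the open mapping theorem, if $S_k(Z_1 \oplus \dots \oplus Z_k)$ were non-meager in $X$ then $S_k$ would be surjective, giving $Y_k = X$ and contradicting $Y \subsetneq X$. Hence each $Y_k$, and therefore $Y$, is meager in $X$.

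To finish, I would construct recursively a basic sequence $(x_n)_{n \ge 1}$ in $X$ whose closed linear span $M$ satisfies $M \cap Y = \{0\}$. At step $n$, once $x_1, \dots, x_{n-1}$ have been chosen so as to form a basic sequence with basic constant bounded by some preassigned $K$, I would select $x_n$ inside a non-empty open set lying outside the meager union $\bigcup_{j \le n}(Y_j + {\rm span}\{x_1,\dots,x_{n-1}\})$ (possible by Baire's theorem in $X$), while simultaneously respecting a Mazur-type small-perturbation constraint that preserves the basic constant bound. The main obstacle is to combine these two demands so that not merely the chosen sequence $(x_n)$ but its \emph{entire} closed linear span $M$ misses $Y \setminus \{0\}$: controlling tails of the series $\sum_j a_j x_j$ via the basic constant is what forces both the basic-sequence machinery and the Baire-category avoidance of the $Y_j$'s, and this is essentially the heart of Kalton's original argument in the closed-subspace case, now adapted to the $F_\sigma$-subspace $Y$.
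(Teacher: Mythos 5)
First, a point of comparison: the paper does not prove this statement at all; it is quoted from Kitson and Timoney \cite[Theorem~3.3]{KitT}, so your attempt can only be measured against their argument. Your two preliminary reductions are correct and are indeed how one begins: if $\overline{Y}$ has infinite codimension, Kalton's theorem (\cite[Theorem~2.2]{KitT}) finishes immediately; otherwise one may replace $X$ by $\overline{Y}$ and assume $Y$ dense and proper; and each $Y_k=T_1(Z_1)+\cdots+T_k(Z_k)$ is meager, since a non-meager continuous linear image of a Fr\'echet space in a Fr\'echet space is everything by the open mapping theorem (cf.\ \cite{Rud}).

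The gap is in the third step, which is where the entire content of the theorem lies. A minor (fixable) issue first: avoiding only $\bigcup_{j\le n}\bigl(Y_j+\mathrm{span}\{x_1,\dots,x_{n-1}\}\bigr)$ at stage $n$ does not even keep \emph{finite} combinations out of $Y$, since a combination whose top index is $m$ may lie in some $Y_j$ with $j>m$; you must avoid $\bigcup_{j\ge 1}$, and to know each $Y_j+F$ ($F$ finite-dimensional) is meager you need the extra remark that it is again an operator range, hence either meager or all of $X$, and the latter would force $Y_j$ (and then $Y$) to be closed. The major issue is the one you yourself flag and then defer: nothing in your scheme prevents a \emph{limit} of finite combinations, i.e.\ a general element of the closed span $M$, from landing in the dense, non-closed subspace $Y$. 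This is not a routine adaptation of Kalton's closed-subspace argument: that argument works through the quotient $X/Y$ (or through closedness of $Y$ when passing to limits), and both tools disappear exactly in the dense case to which you reduced; a bound on the basic constant alone gives no obstruction, since avoidance of a dense $F_\sigma$ subspace is not preserved under limits. The known proofs must use the Banach structure of the $Z_n$ quantitatively: because each $Y_k$ is proper, the sets $\overline{S_k(mU_k)}$ ($U_k$ the unit ball of $Z_1\oplus\cdots\oplus Z_k$) are closed, convex, balanced and nowhere dense, $Y$ lies in their countable union, and the $x_n$ must be chosen outside suitably scaled enlargements of these closed sets so that a gliding-hump/basis-projection argument excludes $\sum_j a_jx_j\in S_k(mU_k)$ for nonzero coefficient sequences. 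That construction is precisely the heart of \cite[Theorem~3.3]{KitT}; as written, your proposal is a correct reduction plus an acknowledged placeholder where the proof should be.
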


Among other applications, the last result is used in \cite{KitT} to show spa\-ce\-a\-bi\-li\-ty of the set of
non-absolutely convergent power series in the disk algebra $A(\D )$ and of the family of non-absolutely $p$-summing
operators between certain pairs of Banach spaces.

\vskip .15cm

Recently, the authors of \cite{BeO} in their Theorem 2.2 have
stated a sufficient condition for spaceability on function Banach spaces.
Then this theorem is applied to prove that conditions $[\al ], \, [\beta ], \, [\al ] + [\beta ]$ given in Section 2 are
respectively equivalent to the spaceability of $L^p_{r\hbox{-}strict}$ (if $p \ge 1$), $L^p_{l\hbox{-}strict}$ (if $p>1$) and
$L^p_{strict}$ (if $p > 1$) (hence equi\-va\-lent
to the respective non-vacuousness of these sets). It is also used to show that, if $CBV[0,1]$
denotes the Banach space (under the norm $\|f\| = |f(0)|+ {\rm Var}\,(f)$)
of continuous functions $[0,1] \to \R$ with bounded variation and $AC[0,1]$
represents the subset of absolutely continuous functions, then set $CBV[0,1] \setminus AC[0,1]$ is spaceable in $CBV[0,1]$.
Incidentally, Wilansky's theorem provides us with a shorter proof of this fact: $AC[0,1]$ is a closed subspace of
$CBV[0,1]$ (see e.g.~\cite{Ada}) and has infinite codimension, because $CBV[0,1] = AC[0,1] \oplus S[0,1]$,
where $S[0,1]$ stands for the subspace of continuous bounded variation {\it singular} (that is, with derivative $0$ almost everywhere) functions.

\vskip .15cm

The main ingredient in the proof of \cite[Theorem 2.2]{BeO} is Nikolskii's theorem of characterization of basic sequences.
But Nikolskii's theorem turns to be true in the setting of F-spaces (recall that an F-space is
a complete metrizable topological vector space).
Namely, if $X$ is an F-space
and $\| \cdot \|$ is an F-norm defining the topology of $X$, then a sequence $(x_n) \subset X \setminus \{0\}$
is basic if and only if there is a constant $\alpha \in (0,+\infty )$ such that, for every pair
$r,s \in \N$ with $s \ge r$ and every finite sequence of scalars $a_1, \dots , a_s$, one has
$$\big\|\sum_{n=1}^r a_n x_n\big\| \leq \alpha \big\|\sum_{n=1}^s a_n x_n\big\|$$
(see \cite[Theorem 5.1.8, p.~67]{KaG}).
Recall that an {\it F-norm} on a vector space $X$ is a functional $\| \cdot \|:X \to [0,+\infty )$ satisfying,
for all $x,y \in X$ and $\la \in \K$, the following properties: $\|x+y\| \le \|x\| + \|y\|$;
$\|\la x\| \le \|x\|$ if $|\la | \le 1$; $\|\la x\| \to 0$ if $\la \to 0$; $\|x\| = 0$ only if $x=0$.

\vskip .15cm

Then we can establish the following theorem, that is an improvement of Theorem 2.2 in \cite{BeO}.
By ${\cal P}(\Omega )$ we represent, as usual, the family of subsets of a set $\Omega$, while
$\sigma (f)$ will denote the support of a function $f:\Omega \to \K$, that is, the set
$$\sigma (f) = \{x \in \Omega : \, f(x) \ne 0\}.  \eqno (1)$$
\begin{theorem} \label{BerOrd}
Let \,$\Omega$ \,be a nonempty set and $Z$ be a topological vector space on \,$\K$.
Assume that \ $X$ \,is an F-space on $\K$ consisting of \,$Z$-valued functions on \,$\Omega$
\,and that \,$\| \cdot \|$ \,is an
F-norm defining the topology of \,$X$. Suppose, in addition, that \,$S$ is a nonempty subset of \,$X$
\,and that \,${\cal S}:{\cal P}(\Omega ) \to {\cal P}(\Omega )$ \,is a set function with \,${\cal S}(A) \supset A$ \,for
all \,$A \in {\cal P}(\Omega )$ \,satisfying the following properties:
\begin{enumerate}
\item[\rm (i)] If $(g_n) \subset X$  satisfies \,$g_n \to g$ in $X$ then there is a subsequence $(n_k) \subset \N$ such that, for every $x \in \Omega$, $g_{n_k}(x) \to g(x)$.
\item[\rm (ii)] There is a constant $C \in (0,+\infty )$ such that \,$\|f+g\| \ge C \|f\|$ \,for all $f,g \in X$ with \,$\sigma (f) \cap \sigma (g) = \emptyset$.
\item[\rm (iii)] $\al f \in S$ for all $\al \in \K$ and all $f \in S$.
\item[\rm (iv)] If $f,g \in X$ are such that $f+g \in S$ and \,${\cal S}(\sigma (f)) \cap \sigma (g) = \emptyset$, then $f \in S$.
\item[\rm (v)] There is a sequence of functions $\{f_n\}_{n \ge 1} \subset X \setminus S$ such that
                ${\cal S}(\sigma (f_m))  \cap \sigma (f_n) = \emptyset$ for all $m,n$ with $m \ne n$.
\end{enumerate}
Then the set \,$X \setminus S$ \,is spaceable in $X$.
\end{theorem}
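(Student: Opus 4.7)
The plan is to extract from the disjoint-support sequence $\{f_n\}$ provided by (v) an infinite-dimensional closed subspace $M$ of $X$ contained in $(X\setminus S)\cup\{0\}$. First, using (ii) together with the $F$-space version of Nikolskii's characterization of basic sequences quoted above from \cite{KaG}, I would show that $(f_n)$ is basic. Since condition (v) together with the hypothesis ${\cal S}(A)\supset A$ implies $\sigma(f_m)\cap\sigma(f_n)=\emptyset$ whenever $m\neq n$, any two ``blocks'' $\sum_{n=1}^{r} a_n f_n$ and $\sum_{n=r+1}^{s} a_n f_n$ have disjoint supports, so (ii) gives $\|\sum_{n=1}^{s} a_n f_n\|\ge C\,\|\sum_{n=1}^{r} a_n f_n\|$, which is precisely Nikolskii's condition with constant $\al=1/C$. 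Note that $f_n\ne 0$: since $S$ is nonempty, (iii) forces $0\in S$, so $f_n\in X\setminus S$ gives $f_n\ne 0$. Setting $M:=\ovl{{\rm span}}\,\{f_n: n\in\N\}$ then produces a closed infinite-dimensional subspace in which every element admits a representation $g=\sum_{n\ge 1} a_n f_n$ convergent in $X$.

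Next, given $g\in M\setminus\{0\}$, fix $n_0$ with $a_{n_0}\ne 0$ and split $g=a_{n_0}f_{n_0}+h$ where $h:=\sum_{n\ne n_0} a_n f_n$. The crux is to verify that ${\cal S}(\sigma(a_{n_0}f_{n_0}))\cap \sigma(h)=\emptyset$, so that (iv) becomes applicable. I would analyze the partial sums $h_N:=\sum_{1\le n\le N,\, n\ne n_0} a_n f_n$, which converge to $h$ in $X$. By (i) a subsequence $(h_{N_k})$ converges pointwise to $h$; but the pairwise disjointness of the $\sigma(f_n)$ forces the whole sequence $(h_N(x))$ to stabilize for every $x\in\O$, namely to $a_{n_1}f_{n_1}(x)$ when $x\in\sigma(f_{n_1})$ for some (necessarily unique) $n_1\ne n_0$, and to $0$ otherwise. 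Therefore $\sigma(h)\subset \bigcup_{n\ne n_0}\sigma(f_n)$, and (v) then immediately yields ${\cal S}(\sigma(f_{n_0}))\cap \sigma(h)=\emptyset$.

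If $g$ belonged to $S$, then (iv) would give $a_{n_0}f_{n_0}\in S$, and (iii) applied with scalar $1/a_{n_0}$ would then give $f_{n_0}\in S$, contradicting (v). This shows $M\setminus\{0\}\subset X\setminus S$, yielding spaceability.

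The main obstacle I anticipate is the support analysis for the infinite series $h$: the series converges only in the $F$-norm, so to locate $\sigma(h)$ one must descend via (i) to a pointwise-convergent subsequence and then exploit the disjointness of the $\sigma(f_n)$ to identify the actual pointwise value of $h(x)$ for every $x\in\O$. Everything else --- the basicness via Nikolskii and the final contradiction via (iii), (iv), (v) --- is a direct unwinding of the hypotheses.
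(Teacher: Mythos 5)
Your proposal is correct and follows essentially the same route as the paper's proof: Nikolskii's criterion (via (ii) and the disjointness of supports) to show $(f_n)$ is basic, then for a nonzero element of $\ovl{{\rm span}}\{f_n\}$ splitting off one term with nonzero coefficient, using (i) together with the disjoint supports to verify the hypothesis of (iv), and concluding with (iii) and (v). The only cosmetic difference is that the paper isolates the term of minimal nonzero-coefficient index rather than an arbitrary one, which changes nothing.
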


\begin{proof}
Let us show that $(f_n)$ is a basic sequence. Indeed, by (iii) one derives that $0 \in S$, so from (v) we get $f_n \ne 0$ for all $n$; moreover, for every pair $r,s \in \N$ with $s \ge r$ and any scalars $a_1, \dots , a_s$ it follows from (ii) and (v) [and the fact ${\cal S}(\sigma (f_n)) \supset \sigma (f_n)$ for all $n$] that
$$
\|\sum_{n=1}^s a_n f_n\| = \|\sum_{n=1}^r a_n f_n + \sum_{n=r+1}^s a_n f_n\| \ge
C \|\sum_{n=1}^r a_n x_n\|,
$$
because the supports of \,$\sum_{n=1}^r a_n f_n$ \,and \,$\sum_{n=r+1}^s a_n f_n$ \,have
empty intersection, since $\sigma (\sum_{n \in F} a_n f_n)
\subset \bigcup_{n \in F} \sigma (f_n)$ for every finite set $F \subset \N$.
According to Nikolskii's theorem, $(f_n)$ is a basic sequence (with basic constant $\alpha = 1/C$).

\vskip .15cm

In particular, the functions $f_n$ $(n \ge 1)$ are linearly independent. Consider the set
$$M := \ovl{{\rm span}}\,\{f_n: \,n \in \N\}.$$
It is plain that $M$ is a closed infinite-dimensional vector subspace of $X$. It is enough to show that $M \setminus \{0\} \subset X \setminus S$. To this end, fix a function $F \in M \setminus \{0\}$. Then there is a uniquely determined sequence $(c_n) \subset \K$ such that
$$
F = \sum_{n=1}^\infty c_n f_n = \| \cdot \|\hbox{--}\lim_{n \to \infty} \sum_{k=1}^n c_k f_k.
$$
Let $N = \min \{n \in \N : \,c_n \ne 0\}$. Then $F = c_Nf_N + h$, with $h = \| \cdot \|\hbox{--}\lim_{n \to \infty} h_n$ and $h_n := \sum_{k=N+1}^n c_k f_k$ $(n \ge N+1)$. Note that $\sigma (f_N) = \sigma (c_Nf_N)$ as $c_N \ne 0$. If $x \in {\cal S} (\sigma (c_Nf_N)) = {\cal S} (\sigma (f_N))$ then, by (v), $x \notin \sigma (f_k)$ for all $k > N$. Hence $h_n(x) = 0$ for all $n > N$. But, from (i), there is a subsequence $(n_k) \subset \N$ with $h_{n_k} \longrightarrow h$ pointwise. Thus $h(x) = 0$ or, that is the same, $x \notin \sigma (h)$. Therefore ${\cal S} (\sigma (c_Nf_N)) \cap \sigma (h) = \emptyset$. By way of contradiction, assume that $F \in S$. Since $F = c_Nf_N + h$, we obtain from (iv) that $c_Nf_N \in S$.
By applying (iii) we get $f_N = c_N^{-1}(c_N f_N)\in S$, which contradicts (v). Consequently, $F \in X \setminus S$, as required.
\end{proof}

\begin{remark}\label{BerOrd-nota}
{\rm Observe that, apart from degrading $X$ to be an F-space, we have also replaced the field $\K$ in the original Theorem 2.2 of \cite{BeO} by any
topological vector space $Z$. Moreover, ${\cal S} (A)$ was simply $A$ in such theorem.}
\end{remark}

Applications of Theorems \ref{Kitson-Timoney}-\ref{BerOrd} will be given in the next section.

\section{Applications}

\quad In this section we make a number of applications of the diverse results established in the last two sections.
Our attention is mainly focused on function spaces.

\subsection{$L^p$ spaces.}
We begin by showing that, if sufficiently
many $\mu$-disjoint measurable sets for higher dimensions are allowed, separability is no longer needed
in the result about $L^p$ stated after Theorem \ref{dense-lineable-bis}. The following result due to
Botelho {\it et al.} \cite[Theorem 2.3]{BCFPS}, where ${\rm dim} (L^p)$ is computed, comes in our help.

\begin{theorem} \label{Lp-dimension}
Let $p \in (0,+\infty )$ and $(\Omega, {\cal M}, \mu )$ be a measure space. Consider its entropy
${\rm ent} (\Omega ):= {\rm card} ({\cal M}_f/{\cal R})$,
where ${\cal M}_f := \{S \in {\cal M}: \, \mu (S) < \infty\}$ and \,$\cal R$ is the equivalence relation in \,${\cal M}_f$ given by
$$
C \,\, {\cal R} \,\,D \hbox{ \ if and only if \ } \mu ((C \setminus D) \cup (D \setminus C)) = 0.
$$
We have:
\begin{enumerate}
\item[\rm (i)] If \,${\rm ent} (\Omega ) > {\mathfrak c}$ then ${\rm dim} (L^p) = {\rm ent} (\Omega )$.
\item[\rm (ii)] If \,$\aleph_0 \le {\rm ent} (\Omega ) \le {\mathfrak c}$ then ${\rm dim} (L^p) = {\mathfrak c}$.
\item[\rm (iii)] If \,${\rm ent} (\Omega ) \in \N$ \,then there is \,$k \in \N$ \,such that \,${\rm ent} (\Omega ) = 2^k$ \,and \hfil\break
                  ${\rm dim} (L^p) = k$.
\end{enumerate}
\end{theorem}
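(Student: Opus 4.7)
The plan is to extract all information about $L^p$ from the measure Boolean algebra $\mathcal{A} := \mathcal{M}_f/\mathcal{R}$, whose cardinality is $\kappa := {\rm ent}(\Omega)$, exploiting that each $f \in L^p$ has $\sigma$-finite support (so its relevant behavior is encoded in countable Boolean combinations of members of $\mathcal{A}$). For (iii), I would use that every finite Boolean algebra is the power set of a finite set, so $|\mathcal{A}| = 2^k$ for some $k$, with $k$ atoms $A_1,\ldots,A_k$; a small atomicity argument (each level set $\{f \in B\} \cap A_j$ is a subclass of $[A_j]$ in $\mathcal{A}$, hence equals $[\emptyset]$ or $[A_j]$) shows every $f \in L^p$ is a.e.\ constant on each $A_j$, whence $\{\chi_{A_j}\}_{j=1}^k$ is a basis and ${\rm dim}(L^p) = k$.

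For (i) and (ii), the lower bound ${\rm dim}(L^p) \geq \mathfrak c$ comes from a Vandermonde trick. From the infinitude of $\mathcal{A}$ one first extracts a countably infinite pairwise disjoint family $(F_n) \subset \mathcal{M}_f$ of positive finite measure. Then, fixing distinct reals $t_\alpha \in (0,1/2)$ for $\alpha < \mathfrak c$, the functions
$$
g_\alpha := \sum_{n=1}^\infty \frac{t_\alpha^n}{\mu(F_n)^{1/p}}\,\chi_{F_n}
$$
lie in $L^p$ (since $\sum_n t_\alpha^{np} < \infty$) and are linearly independent, because any nontrivial relation $\sum_i c_i g_{\alpha_i} = 0$ forces $\sum_i c_i t_{\alpha_i}^n = 0$ for every $n \in \N$, which is incompatible with the nonvanishing of Vandermonde minors. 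For the upper bound I would count: simple functions with coefficients from a countable dense subring of $\K$, supported on representatives of $\mathcal{A}$, form a dense subset of cardinality $\aleph_0 \cdot \kappa = \kappa$, so $|L^p| \le \kappa^{\aleph_0}$; combined with the standard fact ${\rm dim}(X) = |X|$ for any Banach space $X$ of dimension at least $\mathfrak c$, this yields ${\rm dim}(L^p) \le \kappa^{\aleph_0}$. Case (ii) then closes immediately, since $\aleph_0 \le \kappa \le \mathfrak c$ gives $\kappa^{\aleph_0} \le \mathfrak c^{\aleph_0} = \mathfrak c$.

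The main obstacle is case (i), where both bounds must be sharpened from $\kappa^{\aleph_0}$ to $\kappa$ itself. For the upper bound my strategy would be to use Zorn's lemma to select a maximal pairwise $\mathcal{R}$-disjoint family $\{B_\iota\}_{\iota \in I} \subset \mathcal{M}_f$ of positive measure sets, observe that every $f \in L^p$ has $\sigma$-finite support meeting only countably many of the $B_\iota$, and thereby realize $L^p$ as an $\ell^p$-direct sum $\bigoplus_\iota L^p(B_\iota)$; one then controls $|I|$ and each factor $\dim L^p(B_\iota)$ separately by $\kappa$ (using $\kappa > \mathfrak c$). The matching lower bound $\kappa$ is produced by running a separate Vandermonde construction inside pairwise disjoint blocks of $\mathcal{A}$ to manufacture $\kappa$ linearly independent functions, indexed by representatives of the $\kappa$ distinct classes of $\mathcal{A}$.
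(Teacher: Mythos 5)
First, a remark on the comparison you asked for: the paper does not prove this theorem at all. It is quoted from Botelho \emph{et al.} \cite{BCFPS} (their Theorem 2.3), so there is no in-paper argument to measure your proposal against; what follows assesses your sketch on its own merits. Parts (ii) and (iii) are essentially sound. For (iii), the union of representatives of the finitely many classes serves as a unit, the quotient is a finite Boolean algebra with $k$ atoms, and your level-set argument (applied to sets $\{|f|>\varepsilon\}$, which do lie in $\mathcal{M}_f$) gives a.e.\ constancy on atoms and vanishing off their union. For (ii), the Vandermonde family works for every $p\in(0,\infty)$, provided you add the (standard, but unstated) lemma that an infinite Boolean ring such as $\mathcal{M}_f/\mathcal{R}$ contains an infinite pairwise disjoint family of nonzero elements; your counting bound $\dim L^p\le |L^p|\le\kappa^{\aleph_0}\le\mathfrak{c}$ then closes the case.

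The genuine gap is exactly where you put it, in (i), and your plan cannot close it. Even granting the maximal a.e.-disjoint family $\{B_\iota\}_{\iota\in I}$ and the $\ell^p$-sum decomposition (both fine), bounding $|I|\le\kappa$ and each $\dim L^p(B_\iota)\le\kappa$ would not yield $\dim L^p\le\kappa$: elements of the sum are \emph{countable} strings of coordinates, so the decomposition only gives $|L^p|\le\bigl(\kappa\cdot\sup_\iota|L^p(B_\iota)|\bigr)^{\aleph_0}$, i.e.\ the same $\kappa^{\aleph_0}$ you set out to improve; moreover the asserted bound $\dim L^p(B_\iota)\le\kappa$ is unsupported, since a finite measure space can itself have entropy $\kappa$ and the $\kappa$ versus $\kappa^{\aleph_0}$ problem recurs there. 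In fact no argument can prove (i) exactly as transcribed: for the counting measure on a set of cardinality $\kappa>\mathfrak{c}$ with ${\rm cf}(\kappa)=\aleph_0$, the classes are the finite subsets, so ${\rm ent}(\Omega)=\kappa$, while $\dim L^p=\dim\ell^p(\kappa)=|\ell^p(\kappa)|=\kappa^{\aleph_0}>\kappa$ by K\"onig's theorem, using your own identity $\dim X=|X|$ when $\dim X\ge\mathfrak{c}$. What your method does give is $\dim L^p={\rm ent}(\Omega)^{\aleph_0}$, equivalently (i) under the extra hypothesis $\kappa^{\aleph_0}=\kappa$; you should consult \cite{BCFPS} for the precise formulation. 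Finally, your lower-bound plan for (i) (``$\kappa$ disjoint blocks'') is not available in general --- a finite measure of large Maharam type admits only countable a.e.-disjoint families although its entropy exceeds $\mathfrak{c}$ --- but the lower bound is immediate: once $\dim L^p\ge\mathfrak{c}$, the pairwise distinct elements $[\chi_S]$, $S\in\mathcal{M}_f$, give $\dim L^p=|L^p|\ge{\rm ent}(\Omega)$, with no block construction needed.
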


A family ${\cal S} \subset {\cal M}$ is called {\it $\mu$-disjoint} whenever $\mu (C) > 0$ for all $C \in {\cal S}$
and $\mu (C \cap D) = 0$ for all different $C,D \in {\cal S}$. It is easy to see that if $\cal S$ is a
$\mu$-disjoint family with ${\cal S} \subset {\cal M}_f$ and ${\rm card} ({\cal S}) > \aleph_0$
then $[\beta ]$ (see Section 2) holds.

\vskip .15cm

Observe that we allow $0 < p < \infty$ in Theorem \ref{Lp-maximal dense-lineability} below because the results we use
from \cite{Ber2}, \cite{Nie}, \cite{Rom} and \cite{Sub} are also valid in the non-normed case $0 < p < 1$. Then for
$0 < p < \infty$ we redefine \,$L^p_{r\hbox{-}strict}
:= L^p \setminus \bigcup_{q \in (p,\infty ]} L^q$, $L^p_{l\hbox{-}strict}
:= L^p \setminus \bigcup_{q \in (0,p)} L^q$ \,and \,$L^p_{strict}
:= L^p \setminus \bigcup_{q \in (0,\infty ] \setminus \{p\}} L^q$.
Note that these definitions are consistent with the earlier ones for $p \ge 1$,
because $L^r \cap L^t \subset L^s$ if $r<s<t$.

\begin{theorem} \label{Lp-maximal dense-lineability}
Let $p \in (0,\infty )$ and let $(X,\Omega ,\mu )$ be a measure space. If ${\rm ent} (\Omega ) > {\mathfrak c}$, we assume
that there is a $\mu$-disjoint family \,${\cal S} \subset {\cal M}$ \,with \hfil\break ${\rm card} ({\cal S}) = {\rm ent} (\Omega )$.
We have the following assertions, where the maximal dense-lineability is meant to be in $L^p$:
\begin{enumerate}
\item[\rm (a)] If \,$[\alpha ]$ holds and ${\rm ent} (\Omega ) \le {\mathfrak c}$ then $L^p_{r\hbox{-}strict}$ is maximal dense-lineable.
\item[\rm (b)] If \,${\rm ent} (\Omega ) > {\mathfrak c}$ and $[\alpha ]$ holds for every restricted space $(S,{\cal M}|_S,\mu |_S)$
$(S \in {\cal S})$, that is,
$$\inf \{\mu (C): \, C \in {\cal M}, \, C \subset S, \, \mu (C) > 0\} = 0, \eqno [\alpha_S]$$
then $L^p_{r\hbox{-}strict}$ is maximal dense-lineable.
\item[\rm (c)] If \,$[\beta ]$ holds and ${\rm ent} (\Omega ) \le {\mathfrak c}$ then $L^p_{l\hbox{-}strict}$ is maximal dense-lineable.
\item[\rm (d)] If \,${\rm ent} (\Omega ) > {\mathfrak c}$ and ${\cal S} \subset {\cal M}_f$ then $L^p_{l\hbox{-}strict}$ is maximal dense-lineable.
\item[\rm (e)] If \,${\rm ent} (\Omega ) \le {\mathfrak c}$ and $[\alpha ]$ and $[\beta ]$ hold
               then $L^p_{strict}$ is maximal dense-lineable.
\item[\rm (f)] If \,${\rm ent} (\Omega ) > {\mathfrak c}$, ${\cal S} \subset {\cal M}_f$ and $[\alpha_S ]$ holds for all $S \in {\cal S}$
               then $L^p_{strict}$ is maximal dense-lineable.
\end{enumerate}
\end{theorem}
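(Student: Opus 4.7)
The strategy is to apply Corollary \ref{maximal dense-lineable corollary}(c) to the family
$$\Gamma := \{L^q \cap L^p : q \in I\}$$
of vector subspaces of $L^p$, where $I = (p,\infty]$ in (a) and (b), $I = (0,p)$ in (c) and (d), and $I = (0,\infty] \setminus \{p\}$ in (e) and (f). In every case the set $A := \bigcap_{S \in \Gamma}(L^p \setminus S)$ is exactly the strict-order class whose maximal dense-lineability we need to establish.

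Two of the three hypotheses of Corollary \ref{maximal dense-lineable corollary}(c) come for free. Since $L^p$ is an F-space, it is metrizable, so the origin has a countable fundamental system of neighborhoods; and under any of the standing assumptions the constructions below will force $\dim L^p$ to be infinite, so Theorem \ref{Lp-dimension} identifies $\dim L^p$ with $\mathfrak{c}$ in cases (a), (c), (e) and with ${\rm ent}(\Omega)$ in cases (b), (d), (f). Denseness of $\bigcap_{S \in \Gamma} S$ in $L^p$ is also immediate, because this intersection contains every simple function with finite-measure support, and such functions are dense in $L^p$ for every $p \in (0,\infty)$.

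The core step is thus to prove maximal lineability of $A$. In cases (a), (c), (e) one has $\dim L^p = \mathfrak{c}$, and $\mathfrak{c}$-lineability of the three strict-order classes under $[\alpha]$, $[\beta]$, $[\alpha]+[\beta]$ follows from the classical constructions of Romero \cite{Rom} and Subramanian \cite{Sub}, upgraded to $\mathfrak{c}$-lineability by the Hamel-basis argument of \cite{Ber2}; none of those proofs uses separability of $L^p$. In case (b), condition $[\alpha_S]$ applied inside $(S,{\cal M}|_S,\mu|_S)$ produces, for each $S \in {\cal S}$, a function $f_S \in L^p$ with $\sigma(f_S) \subset S$ and $f_S \notin L^q$ for every $q > p$; pairwise $\mu$-disjointness of the supports makes $\{f_S\}_{S \in {\cal S}}$ linearly independent and of cardinality $\kappa := {\rm ent}(\Omega)$. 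In case (d), a pigeonhole step yields an uncountable subfamily ${\cal S}' \subset {\cal S}$ whose members have measures in a common interval $(1/(m+1),1/m]$; partitioning ${\cal S}'$ into $\kappa$ pairwise disjoint countable subfamilies $\{S_{n,\beta}\}_n$ and setting $f_\beta := \sum_n c_n \chi_{S_{n,\beta}}$ with $c_n$ chosen so that $\sum_n |c_n|^p \mu(S_{n,\beta}) < \infty$ but $\sum_n |c_n|^q \mu(S_{n,\beta}) = \infty$ for every $q \in (0,p)$ gives a linearly independent family of size $\kappa$ inside $L^p_{l\hbox{-}strict}$.

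Case (f) is obtained by splitting ${\cal S}$ into two disjoint subfamilies of full cardinality $\kappa$, running the construction of (b) on the first and that of (d) on the second, and pairing the resulting functions through a bijection $\gamma \leftrightarrow (S_\gamma, \beta_\gamma)$: the sums $f_{S_\gamma} + f_{\beta_\gamma}$ have essentially disjoint supports and lie in $L^p_{strict}$, yielding $\kappa$ linearly independent vectors in $A$. I expect this last bookkeeping to be the main obstacle: one has to feed both the right-strict and the left-strict construction from a single $\mu$-disjoint family without losing cardinality or independence. Once the $\dim L^p$-lineability of $A$ is secured in each case, Corollary \ref{maximal dense-lineable corollary}(c) closes the argument.
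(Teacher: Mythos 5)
Your proposal follows essentially the same route as the paper: reduce maximal dense-lineability to maximal lineability through the ``stronger than a dense subspace'' mechanism (the paper applies Theorem \ref{maximal dense-lineable}(c) with $B=\{$step functions$\}$, and its own remark right after the proof points out that Corollary \ref{maximal dense-lineable corollary}(c) with $\Gamma=\{L^p\cap L^q\}_{q\in T}$ does the same job), and then exhibit ${\rm dim}(L^p)$-many linearly independent functions with $\mu$-disjoint supports inside the strict classes: the construction of \cite{Ber2} (which indeed never uses separability) when ${\rm ent}(\Omega)\le\mathfrak{c}$, and a pigeonhole argument plus a partition into countable subfamilies when ${\rm ent}(\Omega)>\mathfrak{c}$. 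Your case (f) --- splitting ${\cal S}$ into two subfamilies of full cardinality, running the $r$-strict construction on one and the $l$-strict construction on the other, and adding the resulting functions in pairs --- is a harmless variant of the paper's device of interleaving the two series into a single function $f_i$; both work for the same disjoint-support reason.

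Two slips in case (d) should be repaired. First, the pigeonhole must return a subfamily of \emph{full} cardinality ${\rm ent}(\Omega)$, not merely an uncountable one: if ${\rm card}({\cal S}')$ were, say, $\aleph_1<{\rm ent}(\Omega)$, you could neither partition ${\cal S}'$ into $\kappa$ pairwise disjoint nonempty countable subfamilies nor reach maximal lineability, since ${\rm dim}(L^p)={\rm ent}(\Omega)>\mathfrak{c}$ here. (This full-cardinality pigeonhole is exactly what the paper imports from \cite{BCFPS}, where the measures are even made constant on the subfamily.) Second, members of ${\cal M}_f$ can have measure larger than $1$, so the countable family of intervals used for the pigeonhole must cover all of $(0,\infty)$, e.g.\ the intervals $\bigl(\tfrac{1}{m+1},\tfrac{1}{m}\bigr]$ together with $(m,m+1]$, not just $(0,1]$. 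Finally, in (b), (d) and (f) you should state explicitly that ${\rm span}\{f_S\}\setminus\{0\}$, and not merely each $f_S$, lies in the strict class: this is precisely where the $\mu$-disjointness of the supports is used, since restricting a nontrivial combination to the support of a summand with nonzero coefficient shows that it stays outside every forbidden $L^q$ while remaining in $L^p$. With these points made explicit, your argument matches the paper's proof.
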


\begin{proof}
Let $X := L^p$. Define $A$ as $L^p_{r\hbox{-}strict}$ in cases (a)-(b), as $L^p_{l\hbox{-}strict}$ in cases (c)-(d), and as
$L^p_{strict}$ in cases (e)-(f). Our task is to show that $A$ is maximal dense-lineable in $X$. To this end,
we consider the set $B$ of step functions, that is,
$$B = {\rm span} \, \{\chi_M: \, M \in {\cal M}, \, \mu (M) < \infty\},$$
where $\chi_M$ stands, as usual, for the characteristic function of $M$. It is well known that $B$ is dense in $L^p$.
Therefore $B$ is dense-lineable because it is a vector space itself. Since $B \subset L^q$ for all $q > 0$, we have
that $A \cap B = \emptyset$. Now, recall that $L^p$ is metrizable. According to Theorem \ref{maximal dense-lineable}(c),
it is enough to show that $A$ is maximal lineable. In other words, we have
to exhibit a vector space $M \subset A \cup \{0\}$ with ${\rm dim} (M) = {\rm dim} (L^p)$.

\vskip .15cm

\noindent (a) We assume that ${\rm ent} (\Omega ) \le {\mathfrak c}$ and that $[\alpha ]$ holds.
By the latter condition, there exists a sequence $(S_n)$ of pairwise disjoint measurable sets with
$0 < \mu (S_n) < 1/2^n$ $(n \in \N )$, see \cite[pp.~233--235]{Nie}. Then ${\rm ent} (\Omega ) \ge \aleph_0$,
so $\aleph_0 \le {\rm ent} (\Omega ) \le {\mathfrak c}$. It follows from Theorem \ref{Lp-dimension} that
${\rm dim} (L^p) = {\mathfrak c}$. In \cite[Proof of Theorem 3.4]{Ber2} it is proved that the functions
$f_a:\Omega \to [0,\infty )$ $(a>0)$ given by
$$f_a =\sum_{n=1}^\infty {\chi_{S_n} \over n^{1/p} (\log (n+1))^{a/p} \mu (S_n)^{1/p}} \eqno (2)$$
form a linearly independent family in $L^p$ and that
$M := {\rm span} \{f_a: \, a \in (1,\infty )\} \subset L^p_{r\hbox{-}strict} \cup \{0\}$.
Finally, ${\rm dim} (M) = {\rm card} ((1,\infty )) = {\mathfrak c} = {\rm dim} (L^p)$.

\vskip .15cm

\noindent (b) Here ${\rm ent} (\Omega ) > {\mathfrak c}$, so ${\rm dim} (L^p) = {\rm ent} (\Omega )$.
As before, condition $[\alpha_S]$ entails the existence, for each $S \in {\cal S}$, of a sequence
$\{C_{n,S}\}_{n \ge 1} \subset {\cal M}|_S$ such that $C_{n,S} \cap C_{m,S} = \emptyset$ if $n \ne m$
and $0 < \mu (C_{n,S}) < 1/n$ for all $n \in \N$. Note that, due to the $\mu$-disjointness of ${\cal S}$,
we can assume that $C_{n,S} \cap C_{m,\widetilde{S}} = \emptyset$ whenever $(n,S) \ne (m,\widetilde{S})$.
The last property guarantees the linear independence of the family $\{f_S\}_{S \in {\cal S}}$, where
$$f_S =\sum_{n=1}^\infty {\chi_{C_{n,S}} \over n^{1/p} (\log (n+1))^{2/p} \mu (C_{n,S})^{1/p}}.$$
Again, $M := {\rm span} \{f_S: \, S \in {\cal S}\} \subset L^p_{r\hbox{-}strict} \cup \{0\}$ and
${\rm dim} (M) = {\rm card} ({\cal S}) = {\rm ent} (\Omega ) = {\rm dim} (L^p)$.

\vskip .15cm

\noindent (c) This time $[\beta ]$ holds and ${\rm ent} (\Omega ) \le {\mathfrak c}$. But $[\beta ]$
implies the existence of a sequence $(S_n)$ of pairwise disjoint measurable sets with $1 < \mu (S_n) < \infty$
$(n \in \N )$. In particular, ${\rm ent} (\Omega ) \ge \aleph_0$, so Theorem \ref{Lp-dimension}
yields again that ${\rm dim} (L^p) = {\mathfrak c}$. If now we proceed exactly as in
\cite[Proof of Theorem 3.4]{Ber2} then one gets that the functions $f_a$ $(1 < a <\infty )$
defined as in (2) span again the desired vector space $M \subset L^p_{l\hbox{-}strict} \cup \{0\}$.

\vskip .15cm

\noindent (d) We are assuming here that ${\rm ent} (\Omega ) > {\mathfrak c}$ and ${\cal S} \subset {\cal M}_f$.
Let us mimic part of the clever proof of Theorem 3.4 in \cite{BCFPS}. Since ${\rm card} ({\cal S})
= {\rm ent} (\Omega ) > {\mathfrak c}$ and $\{\mu (S): \, S \in {\cal S}\} \subset (0,\infty )$
(with ${\rm card} ((0,\infty )) = {\mathfrak c}$), there must be $\gamma \in (0,\infty )$ and a
subfamily ${\cal S}_0 \subset {\cal S}$ such that ${\rm card} ({\cal S}_0) = {\rm ent} (\Omega )$
and $\mu (S) = \gamma$ for all $S \in {\cal S}_0$. Since ${\cal S}_0$ is uncountable, there is a collection
$\{{\cal S}_i\}_{i \in I}$ (with ${\rm card} (I) = {\rm card} ({\cal S}_0) = {\rm ent} (\Omega )$)
of pairwise disjoint countable families ${\cal S}_i = \{S_{i,n}\}_{n \ge 1}$
such that $\bigcup_{i \in I} {\cal S}_i = {\cal S}_0$.
Observe that $\mu (S_{i,n}) = \gamma$ for all $(i,n) \in I \times \N$. For each $i \in I$,
define \,$f_i := \sum_{n=1}^\infty {\chi_{S_{i,n}} \over \gamma n^{1/p} (\log (n+1))^{2/p}}$.
From the fact that the supports of the $f_i$'s are mutually $\mu$-disjoint one infers that these
functions are linearly independent. This together with the equality
$$\int_\Omega |f_i|^t \, d\mu =
\sum_{n=1}^\infty {1 \over n^{t/p} (\log (n+1))^{2t/p}} \,\,\,\, (< \infty \hbox{ if and only if } t \ge p)
$$
yields that \,$M := {\rm span} \{f_i: \, i \in I\}$ \,is a vector space satisfying
${\rm dim} (M) = {\rm card} (I) = {\rm dim} (L^p)$ and $M \setminus \{0\} \subset L^p_{l\hbox{-}strict}$.

\vskip .15cm

\noindent (e) This part is achieved by combining appropriately the approaches of (a) and (b), as
similarly suggested in \cite[Proof of Theorem 3.4]{Ber2}.

\vskip .15cm

\noindent (f) Finally, assume that ${\rm ent} (\Omega ) > {\mathfrak c}$, ${\cal S} \subset {\cal M}_f$
and $[\alpha_S]$ is satisfied for all $S \in {\cal S}$. By the proofs of (b) and (d), and since any
uncountable set can be partitioned into two sets with the same cardinality, one obtains that there
are $\gamma \in (0,\infty )$, a set $I$ with ${\rm card} (I) = {\rm ent} (\Omega )$ and families
${\cal S}_0 = \{S_{i,n}\}_{i \in \N , n \in \N}$, ${\cal S}_{00} = \{C_{i,n}\}_{i \in \N , n \in \N}$
such that $0 < \mu (C_{i,n}) < 1/2^n$, $\mu (S_{i,n}) = \gamma$ $(n \in \N , \, i \in I)$,
$\mu (C_{i,n} \cap C_{j,m}) = 0 = \mu (S_{i,n} \cap S_{j,m})$ if $(i,n) \ne (j,m)$ and
$\mu (C_{i,n} \cap S_{j,m}) = 0$ for all $(i,n),(j,m) \in I \times \N$. Define
$D_{i,2n-1} := C_{i,n}$, $D_{i,2n} = S_{i,n}$,
$\gamma_{i,2n-1} := \mu (C_{i,n})^{1/p}$, $\gamma_{i,2n} = \gamma$ $(i \in I, \, n \in \N )$. The functions
\,$f_i := \sum_{n=1}^\infty {\chi_{D_{i,n}} \over n^{1/p} (\log (n+1))^{2/p} \, \gamma_{i,n}}$
\,are easily seen to be linear independent and to satisfy that
\,$M := {\rm span} \{f_i: \, i \in I\}$ \,fulfills $M \setminus \{0\} \subset L^p_{strict}$ and
${\rm dim} (M) = {\rm ent} (\Omega ) = {\rm dim} (L^p)$.
\end{proof}

\begin{remarks}
{\rm 1. Note that Corollary \ref{maximal dense-lineable corollary} could also have been used
in the last proof: take $X = L^p$, $\Gamma = \{L^p \cap L^q\}_{q \in T}$, with $T = (p,\infty ]$,
$(0,p)$ or $(0,p) \cup (p,\infty ]$.

\noindent 2. Assume that ${\rm ent} (\Omega ) > c$. According to \cite[Lemma 3.1]{BCFPS}, a sufficient
condition for the existence of a $\mu$-disjoint family ${\cal S} \subset {\cal M}_f$ is the existence of a cardinal number
$\zeta$ such that ${\mathfrak c} \le \zeta < {\rm ent} (\Omega )$ and satisfying that, for every $A \in {\cal M}_f$ with
$\mu (A) > 0$, there are at most $\zeta$ subsets of $A$ with positive measure belonging to different classes of ${\cal M}_f/{\cal R}$.

\noindent 3. In \cite[Theorem 4.4]{BCFPS} a measure space $(\Omega ,{\cal M}, \mu )$ is constructed satisfying that,
for every $p,q$ with $1 \le q < p$, $L^p \setminus L^q$ is {\it not} maximal lineable. In particular, $L^p_{l\hbox{-}strict}$ is not
maximal lineable either.}
\end{remarks}

Concerning spaceability, a number of authors have recently devoted much effort to find large closed subspaces within
special subsets of $L^p$ (for general o specific measures $\mu$ such as the Lebesgue measure or the counting measure),
in particular within sets of functions which are $p$-integrable but not $q$-integrable for some $p,q \in (0,+\infty ]$,
see for instance \cite{BBG}, \cite{BeO}, \cite{BPS}, \cite{BCFP}, \cite{BCFPS}, \cite{BDFP}, \cite{BFPS},
\cite{GMS}, \cite{GPS}, \cite{GKP1} and \cite{GKP2}. Specially, in \cite{BCFPS} (\cite{GKP2}, resp.) sufficient conditions are given
for $L^p_{l\hbox{-}strict} \cup \{0\}$ to contain a closed vector space with maximal dimension
(a closed vector space isometric to $\ell_p$, resp.), among other interesting results; and in \cite{BCFP} and \cite{BDFP}
spaceability properties of subsets of the sequence spaces $c_0(X), \, \ell_p (X)$ $(0 < p <\infty )$, or similar ones (where $X$ is an
infinite dimensional Banach space), are shown.

\vskip .15cm

In Section 3 we mentioned that, by using the Banach version of Theorem \ref{BerOrd}, it was proved in \cite{BeO}
that the spaceability of $L^p_{r\hbox{-}strict}$ ($p \ge 1$), $L^p_{l\hbox{-}strict}$ ($p>1$) and $L^p_{strict}$ ($p>1$) is respectively equivalent
to $[\al ]$, $[\beta ]$ and $[\al ] + [\beta ]$. Let us show how the spaceability of these sets
can also be extracted from Theorem \ref{Kitson-Timoney}. The three cases being analogue, we will prove the assertion only for
$A := L^p_{r\hbox{-}strict}$ (with $p \ge 1$) under $[\al ]$. By using that the convergence of a sequence $(f_k)$ in $(L^r,\| \cdot \|_r)$
carries the a.e.-pointwise convergence of some subsequence, it is easy to see that
$(L^p \cap L^q,\|\cdot \|_p + \|\cdot \|_q)$ is a Banach space for each $q>p$. Moreover,
the inclusion $j_q:(L^p \cap L^q,\|\cdot \|_p + \|\cdot \|_q) \hookrightarrow (L^p,\| \cdot \|_p)$ is (linear and) continuous.
Now, it is well known that $L^r \cap L^s \subset L^t$ whenever $0 < r < t < s \le \infty$.
It follows that $A$ can be written as $A = L^p \setminus \bigcup_{q > p} (L^q \cap L^p) = L^p \setminus \bigcup_{n \ge 1} (L^{p+1/n} \cap L^p) = L^p \setminus {\rm span} \big( \bigcup_{n \ge 1} (L^{p+1/n} \cap L^p) \big)$.
Note that $Y := {\rm span} \big( \bigcup_{n \ge 1} (L^{p+1/n} \cap L^p) \big)$ is not closed in $L^p$ because it is dense in $L^p$
(since it contains all step functions) and $L^p \ne Y$ (due to $[\al ]$).
Finally, in Theorem \ref{Kitson-Timoney} just take $X = L^p$, $Z_n = L^p \cap L^{p+1/n}$ and $T_n = j_{p+1/n}$ $(n \ge 1)$.

\vskip .15cm

Notice that that $[\alpha ]$ is satisfied by $\Omega = [0,1]$ endowed with the Lebesgue measure. Hence the result proved in the previous paragraph
yields in particular the spaceability of $L^p_{r\hbox{-}strict} [0,1]$,
so covering the main statement in \cite{BFPS} for $p \ge 1$. But the case $p>0$ is not covered because, to the best of our
knowledge, Theorem \ref{Kitson-Timoney} has not been given a proof when the $Z_n$'s are just F-spaces. Nevertheless,
by using Theorem \ref{BerOrd} (with $Z = \K$, ${\cal S}(A) = A$ for all $A \subset \Omega$ and the F-norm in $L^p$ given by
$\|f\| = (\int_\Omega |f|^p \, d\mu )^{1/p}$ if $1 \le p < \infty$, $\|f\| = \int_\Omega |f|^p \, d\mu $ if $0 < p < 1$, and
$\|f\| = {\rm ess \,\, sup} \, |f|$ if $p = \infty$) and taking into account that, as already noticed, for every $p>0$ the non-vacuousness of
$L^p_{r\hbox{-}strict}, \, L^p_{l\hbox{-}strict}, \, L^p_{strict}$ is respectively equivalent to $[\al ]$, $[\beta ]$ and $[\al ] + [\beta ]$,
we can mimic the proof of Theorem 3.3 in \cite{BeO} so as to conclude the following result, which settles the question of spaceability of
the three mentioned sets, even in the non-locally convex case.

\begin{theorem}
Assume that $p \in (0,\infty ]$ and that $(\Omega ,{\mathcal M}, \mu )$ is a measure space. We have:
\begin{enumerate}
\item[\rm (a)] If \,$0 < p < \infty$, then \,$L^p_{r\hbox{-}strict}$ \,is spaceable if and only if \,$[\alpha ]$ holds,
$L^p_{l\hbox{-}strict}$ \,is spaceable if and only if \,$[\beta ]$ holds, and $L^p_{strict}$ is
spaceable if and only if both \,$[\alpha ]$ and \,$[\beta ]$ hold.
\item[\rm (b)] The set \,$L^\infty_{l\hbox{-}strict}$ \,is spaceable if and only if \,$[\beta ]$ holds.
\end{enumerate}
\end{theorem}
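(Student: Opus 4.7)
The plan is to apply Theorem \ref{BerOrd} with $Z = \K$, $\mathcal{S}(A) = A$ for every $A \subset \Omega$, and $X = L^p$ equipped with the F-norm described in the preamble to the theorem. The necessity direction is immediate: spaceability of any of the four sets in question forces it to be non-empty, and by the Romero--Subramanian characterization quoted in Section 2, non-vacuousness of $L^p_{r\hbox{-}strict}$, $L^p_{l\hbox{-}strict}$, $L^p_{strict}$ is respectively equivalent to $[\alpha]$, $[\beta]$, $[\alpha]+[\beta]$; the same dichotomy handles $L^\infty_{l\hbox{-}strict}$.

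For sufficiency, in each case I would set $S := \bigcup_{q \in T}(L^p \cap L^q)$, where $T$ is the index set appearing in the definition of the strict-order space at hand, so that $X \setminus S$ coincides with the set whose spaceability we want. Hypotheses (i)--(iv) of Theorem \ref{BerOrd} are essentially automatic. Indeed, (i) is the classical fact that convergence in the F-norm of $L^p$ (for any $p \in (0,\infty]$) yields an a.e.-pointwise convergent subsequence. Hypothesis (ii) holds with $C=1$ because on disjoint supports one has $\|f+g\|^p = \|f\|^p + \|g\|^p$ when $1 \le p < \infty$, $\|f+g\| = \|f\| + \|g\|$ when $0 < p < 1$, and $\|f+g\|_\infty = \max(\|f\|_\infty, \|g\|_\infty)$ when $p = \infty$. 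Hypothesis (iii) is evident from the vector-space structure of each $L^q$, and (iv) follows from the pointwise bound $|f| \le |f+g|$ that holds whenever $\sigma(f)\cap\sigma(g) = \emptyset$.

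The main obstacle is hypothesis (v): constructing a sequence $(f_k)_{k \ge 1}$ in $X \setminus S$ with pairwise disjoint supports. For $L^p_{r\hbox{-}strict}$ under $[\alpha]$, I plan to extract pairwise disjoint $\{S_n\}$ with $0 < \mu(S_n) < 2^{-n}$, partition $\N$ into infinitely many infinite blocks $\{N_k\}$, enumerate the $k$-th block as $(T^k_j)_{j \ge 1}$ (so that $\mu(T^k_j) < 2^{-j}$), and define
$$
f_k := \sum_{j=1}^\infty \frac{\chi_{T^k_j}}{j^{1/p}(\log(j+1))^{2/p}\,\mu(T^k_j)^{1/p}}.
$$
A computation analogous to that of \cite{Ber2} gives $\int |f_k|^p\,d\mu < \infty$, while the exponential smallness of $\mu(T^k_j)$ forces $\int |f_k|^q\,d\mu = \infty$ for every $q > p$. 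For $L^p_{l\hbox{-}strict}$ under $[\beta]$ the analogous construction uses pairwise disjoint pieces $\{T^k_j\}$ with $1 < \mu(T^k_j) < 2$ (supplied by $[\beta]$ as in the proof of Theorem \ref{Lp-maximal dense-lineability}(c)), keeping the coefficients $j^{-1/p}(\log(j+1))^{-2/p}$ but dropping the $\mu(T^k_j)^{1/p}$ factor in the denominator, which produces $f_k \in L^p \setminus \bigcup_{q<p}L^q$. For $L^p_{strict}$ I would interleave both constructions within each block, using $[\alpha_{S}]$-type and $[\beta]$-type pieces side by side. Finally, for $L^\infty_{l\hbox{-}strict}$ under $[\beta]$ the simplest choice suffices: group a disjoint family $\{S_n\}$ with $\mu(S_n) > 1$ into infinitely many infinite blocks $\{E_k\}$, whence $\mu(E_k) = \infty$, and set $f_k := \chi_{E_k}$. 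Verifying in each subcase the integrability estimates that place $f_k$ in $L^p \setminus L^q$ for the prescribed range of $q$ is the only genuinely technical step, but given the explicit formulas it reduces to familiar series comparisons.
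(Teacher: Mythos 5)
Your strategy coincides with the paper's: the paper proves this precisely by applying Theorem~\ref{BerOrd} with $Z=\K$, ${\cal S}(A)=A$ and the indicated F-norm on $L^p$, invoking the non-vacuousness equivalences and mimicking the proof of Theorem 3.3 of \cite{BeO}; your verification of (i)--(iv) and your constructions for $L^p_{r\hbox{-}strict}$ and for $L^\infty_{l\hbox{-}strict}$ are fine. However, your treatment of $L^p_{l\hbox{-}strict}$ (and hence of $L^p_{strict}$) contains a step that fails as written: you drop the factor $\mu(T^k_j)^{1/p}$ from the denominator and instead demand pieces with $1<\mu(T^k_j)<2$, claiming $[\beta]$ supplies them. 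It does not: for a purely atomic measure whose $n$-th atom has measure $4^n$, condition $[\beta]$ holds, yet no measurable set has measure in $(1,2)$, and indeed no infinite disjoint family of sets of finite positive measure can have uniformly bounded measures, so without the normalizing factor your series need not even belong to $L^p$. The repair is exactly the paper's display (2): keep the factor $\mu(T^k_j)^{1/p}$, use disjoint pieces of finite measure $>1$ (which $[\beta]$ does provide by a routine exhaustion), and then $\int|f_k|^p\,d\mu=\sum_j j^{-1}(\log (j+1))^{-2}<\infty$ while $\int|f_k|^q\,d\mu\ge\sum_j j^{-q/p}(\log (j+1))^{-2q/p}=\infty$ for $0<q<p$; the same correction (and a word on making the $[\alpha]$-pieces disjoint from the $[\beta]$-pieces, e.g.\ by removing from the large sets the union of the small ones) is needed in your interleaved construction for $L^p_{strict}$.

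The second genuine gap is the necessity direction of part (b). The Romero--Subramanian dichotomy you quote is the one the paper invokes ``for every $p>0$'', i.e.\ for finite exponents; it does not extend to $p=\infty$. Non-vacuousness of $L^\infty_{l\hbox{-}strict}$ is equivalent to $\mu(\Omega)=\infty$, not to $[\beta]$: if $\Omega$ carries an atom of infinite measure and no sets of large finite measure, the constant function $1$ lies in $L^\infty\setminus\bigcup_{q<\infty}L^q$ although $[\beta]$ fails. Hence your chain ``spaceable $\Rightarrow$ nonempty $\Rightarrow[\beta]$'' breaks at the second arrow when $p=\infty$, and the necessity in (b) requires a separate argument (this is the point at which the paper defers to the proof of Theorem 3.3 in \cite{BeO} rather than to the non-vacuousness equivalences). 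You should either supply such an argument or restrict your claim about the ``same dichotomy'' to the finite-$p$ cases of part (a).
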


The Banach version of Theorem \ref{BerOrd} given in \cite{BeO} has been recently used by Akbarbaglu and Maghsoudi \cite{AkM}
to discover spaceability in certain related subsets of Orlicz spaces.

\vskip .15cm

In their paper \cite{GKP2} (see also \cite{GKP1}) Glab, Kaufmann and Pellegrini proved, among other results,
the following, which improves \cite[Theorem 4.1]{Ber2}.

\begin{theorem}\label{GlabKaufmannPellegrini}
Assume that $\mu$ is an atomless, outer regular, positive Borel measure on $\Omega$ with full support, where
$\Omega$ is a topological space admitting a countable family $(U_n)$ of nonvoid open subsets such that
every nonvoid open subset $A$ of \,$\Omega$ contains some $U_j$. Let $p \in (0,\infty )$ and consider the set
$S_p := \{f \in L^p: \, f$ is nowhere $L^q$ for each $q \in (p,\infty ]\}$. We have:
\begin{enumerate}
\item[\rm (a)] The set $S_p$ contains, except for zero, an $\ell_p$-isometric subspace of $L^p$. In particular, $S_p$ is spaceable.
\item[\rm (b)] The set $S_p$ is maximal dense-lineable.
\end{enumerate}
\end{theorem}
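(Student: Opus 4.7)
The plan is to prove (a) by constructing a sequence $(f_n)_{n \geq 1}$ of pairwise disjointly supported unit-norm functions in $S_p$; the disjoint supports will automatically make the closed linear span $\ell_p$-isometric, and the $\pi$-base hypothesis reduces the verification that every nonzero element of the span lies in $S_p$ to a single check on each basic open set $U_j$. Part (b) will then follow from (a) by invoking Theorem \ref{maximal dense-lineable}(c), with the $\ell_p$-copy furnished by (a) serving as the maximal-lineable witness and the space of step functions playing the role of the dense-lineable subset $B$.

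For the construction in (a), I would first use the atomlessness of $\mu$ together with $\mu (U_j) > 0$ (full support) to select, for every $(j,k) \in \N^2$, a measurable set $E_{j,k} \subset U_j$ of prescribed tiny measure $\varepsilon_{j,k}$, arranging by a standard greedy exhaustion that the whole family $\{E_{j,k}\}$ is pairwise disjoint. After partitioning $\N$ into infinitely many infinite blocks $I_1, I_2, \dots$, I would define
\[
f_n \,:=\, \sum_{j \geq 1}\, \sum_{k \in I_n} \alpha_{j,k}\, \chi_{E_{j,k}}, \qquad \alpha_{j,k} \,:=\, \frac{1}{\varepsilon_{j,k}^{1/p}\, j^{2/p}\, k^{1/p} (\log(k+1))^{2/p}}.
\]
A direct computation gives $\|f_n\|_p^p = \sum_{j \geq 1}\sum_{k \in I_n} \frac{1}{j^2 k (\log(k+1))^2} < \infty$, so $f_n \in L^p$. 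For each $j \in \N$ and $q \in (p,\infty)$, the exponent $1-q/p$ is negative, and a sufficiently fast decay such as $\varepsilon_{j,k} \leq 2^{-jk^2}$ forces
\[
\int_{U_j} f_n^q\, d\mu \,\geq\, \sum_{k \in I_n} \alpha_{j,k}^q\, \varepsilon_{j,k} \,=\, \sum_{k \in I_n} \frac{\varepsilon_{j,k}^{\,1-q/p}}{j^{2q/p}\, k^{q/p} (\log(k+1))^{2q/p}} \,=\, +\infty,
\]
while $\alpha_{j,k} \to \infty$ as $k \to \infty$ ensures $f_n \chi_{U_j} \notin L^\infty$ too. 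After normalizing to $\|f_n\|_p = 1$, the disjointness of supports yields $\|\sum c_n f_n\|_p^p = \sum |c_n|^p$, so $M := \overline{\mathrm{span}}\,\{f_n : n \geq 1\}$ is $\ell_p$-isometric. To verify $M \setminus \{0\} \subset S_p$, take $F = \sum c_n f_n$ with $c_N \ne 0$ and any nonempty open $A$; the $\pi$-base gives $U_j \subset A$ for some $j$, and since the other $f_n$'s vanish on $\sigma (f_N)$, one gets $\int_A |F|^q\, d\mu \geq |c_N|^q \int_{U_j} f_N^q\, d\mu = +\infty$ for every $q \in (p,\infty]$.

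For (b), outer regularity together with the countable $\pi$-base makes finite unions of the $U_j$'s $L^p$-dense, so $L^p$ is separable and $\mathrm{dim}\,(L^p) = \mathfrak c$; as $M$ from (a) already has dimension $\mathfrak c$, $S_p$ is maximal lineable. Let $B$ be the vector space of step functions: then $B$ is dense (hence dense-lineable), $B \cap S_p = \emptyset$ (step functions belong to every $L^q$), and $S_p + B \subset S_p$, since for $f \in S_p$, $g \in B$, any nonempty open $A$, and $q > p$, the relations $(f+g)\chi_A \in L^q$ and $g \chi_A \in L^q$ would combine to give $f \chi_A \in L^q$. Since $L^p$ is metrizable, Theorem \ref{maximal dense-lineable}(c) now yields the maximal dense-lineability of $S_p$. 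I expect the main technical obstacle to be the joint construction of the sets $E_{j,k}$ (genuinely pairwise disjoint across all indices yet of arbitrarily small measure inside each $U_j$) with coefficients $\alpha_{j,k}$ simultaneously providing $L^p$-summability and forcing divergence of every $L^q$-integral on every basic open set; once this combinatorial/measure-theoretic balancing act is arranged, the $\pi$-base reduction and the general criteria of Sections 2--3 assemble the two conclusions.
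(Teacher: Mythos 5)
Your handling of part (b) is exactly what the paper does for the only portion of this theorem it actually proves: the paper quotes part (a) from \cite{GKP2} without proof and merely observes that (b) then follows from Theorem \ref{maximal dense-lineable} with $X=L^p$, $A=S_p$ and $B=\{$step functions$\}$ (the paper invokes part (b) of that theorem with $\alpha=\mathfrak c$, you invoke part (c); the difference is immaterial, and both versions rely, as you do, on the separability of $L^p$ --- hence $\dim(L^p)=\mathfrak c$ --- which is taken for granted from the hypotheses). Your verifications that $B$ is dense-lineable, $S_p\cap B=\emptyset$ and $S_p+B\subset S_p$ are all correct. Where you go beyond the paper is in reconstructing (a), and your construction is sound: disjointly supported nonnegative $f_n$, each placing sets $E_{j,k}$ of rapidly shrinking positive measure inside \emph{every} $U_j$, with $\alpha_{j,k}^p\varepsilon_{j,k}=j^{-2}k^{-1}(\log(k+1))^{-2}$ summable while $\alpha_{j,k}^q\varepsilon_{j,k}$ contains the factor $\varepsilon_{j,k}^{1-q/p}\ge 2^{jk^2(q/p-1)}$, which forces divergence on $U_j$ for every $q>p$ (and unboundedness for $q=\infty$); the $\pi$-base localizes the nowhere-$L^q$ check to the $U_j$, and the disjoint supports give both the $\ell_p$-isometry and the reduction of a general $F=\sum c_nf_n$ to $c_Nf_N$ on $\sigma(f_N)$.

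The step you rightly flag as the main obstacle is the only one needing a genuine argument, and the plain greedy exhaustion does not quite work as stated: when you choose $E_{j,k}\subset U_j$ of positive measure at some stage, nothing prevents it from covering, up to a null set, all of $U_l\setminus(\hbox{previously chosen sets})$ for some not-yet-processed $U_l$ of very small measure, after which no admissible choice inside $U_l$ remains. The standard repair: at stage $i$, with $W_i:=U_{j_i}\setminus\bigcup_{h<i}E_h$ and $V_l:=U_l\setminus\bigcup_{h<i}E_h$ all of positive measure (inductive hypothesis), first reserve witnesses $B_l\subset V_l$ with $0<\mu(B_l)\le 2^{-l}\delta$ for $\delta<\mu(W_i)$, then choose $E_i\subset W_i\setminus\bigcup_{l}B_l$ (still of positive measure) as small as desired; since $\mu(V_l\setminus E_i)\ge\mu(B_l)>0$ for every $l$, the induction survives. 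With this lemma inserted --- and granting, as the hypotheses implicitly do, that atomlessness yields subsets of each $U_j$ of arbitrarily small positive measure even when $\mu(U_j)=\infty$ --- your proof of (a) is complete, and it supplies an argument the paper itself omits.
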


We recall that $f$ is nowhere $L^q$ means that, given a nonvoid open subset $U$ of $\Omega$,
the restriction $f|_U$ does not belong to $L^q(U)$. We notice that, once (a) is achieved, the proof of (b) given in \cite{GKP2}
can be considerably shortened by using Theorem \ref{maximal dense-lineable}(b): take $X = L^p$,
$\al = {\mathfrak c}$, $A = S_p$ and $B = \{$the step functions$\}$.

\begin{remark}
{\rm In view of the last argument (and others along this paper) one might believe that maximal dense-lineabiity
can only happen when there is spaceability. This is far from being true. For instance, for $X = c_0$ or $\ell_p$
$(1 \le p \le +\infty )$ Cariello and Seoane \cite{CaS} have recently proved that the subset
$$Z(X) := \{x=(x_n) \in X: \, x_n = 0 \, \hbox{ only for finitely many } \, n \in \N\}$$
is $\mathfrak{c}$-lineable
(so maximal lineable) but {\it not spaceable.} Now, if we take $A = Z(X)$ and $B = c_{00} =$ the space of
sequences with only finitely many nonzero entries, then $A + B \subset A$, and Theorem \ref{maximal dense-lineable}
yields that {\it $Z(X)$ is maximal dense-lineable.}}
\end{remark}

\subsection{Spaces of continuous and differentiable functions.}
Theorem \ref{maximal dense-lineable} can also be applied to reinforce other statements given after Theorem \ref{dense-lineable-bis}.
Recall that a $C^\infty$-function $f:[0,1] \to \K$ is said to have a {\it Pringsheim singularity} at a point $x_0 \in [0,1]$
whenever the radius of convergence of the Taylor series of $f$ at $x_0$ is zero. Obviously, in such a case, $f$ is not
analytic at $x_0$; but the converse is false.

\begin{theorem} \label{Cp-analytic-Pringsheim}
Consider the spaces \,$C^p [0,1]$ $(p \in \N_0 \cup \{\infty \})$ endowed with their natural topologies. We have:
\begin{enumerate}
\item[\rm (a)] Let $p \in \N_0$. The set $A_1 := \{f \in C^p[0,1]: \, f^{(p)}$ is differentiable
               at no point of $[0,1]\}$ is maximal dense-lineable in $C^p[0,1]$.
\item[\rm (b)] The set $A_2 := \{f \in C^\infty [0,1]: \, f$ is analytic at no point of $[0,1]\}$ is maximal dense-lineable in $C^\infty [0,1]$.
\item[\rm (c)] If \,$\K = \C$, the set $A_3 := \{f \in C^\infty [0,1]: \, f$ has a Pringsheim singularity at every point of \,$[0,1]\}$ is
               maximal dense-lineable in $C^\infty [0,1]$.
\end{enumerate}
\end{theorem}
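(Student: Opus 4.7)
The plan is to apply Theorem \ref{maximal dense-lineable}(c) in all three cases, with $B$ chosen to be the space of all polynomials restricted to $[0,1]$. In each case the ambient space ($C^p[0,1]$ for (a) and $C^\infty[0,1]$ for (b), (c)) is a separable Fr\'echet space, hence metrizable with dimension $\mathfrak{c}$, and $B$ is a dense vector subspace: for $C^p[0,1]$ this is Weierstrass' approximation theorem, while for $C^\infty[0,1]$ one approximates the top derivative $f^{(N)}$ uniformly by a polynomial $Q$ and defines $P$ as the $N$-fold antiderivative of $Q$ with the right Taylor coefficients of $f$ at $0$. Hence $B$ is dense-lineable and it suffices to check the disjointness $A_i\cap B=\emptyset$, the stronger-than condition $A_i+B\subset A_i$, and the maximal lineability of $A_i$.

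The disjointness is immediate since a polynomial is $C^\infty$, analytic, and has Taylor series of infinite radius at every point of $[0,1]$. For the stronger-than property: in (a), $(f+P)^{(p)}=f^{(p)}+P^{(p)}$ and $P^{(p)}$ is itself a polynomial, so differentiable everywhere, whence $(f+P)^{(p)}$ is differentiable at a point iff $f^{(p)}$ is; in (b), if $f+P$ were analytic at some $x_0$ then $f=(f+P)-P$ would be analytic there too; in (c), the Taylor series at $x_0$ of $f+P$ is the termwise sum of two power series, one of radius $0$ and one of radius $+\infty$, so it still has radius $0$.

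It therefore remains to exhibit, inside each $A_i\cup\{0\}$, a vector subspace of dimension $\mathfrak{c}$. For (a), I start from the classical fact (going back to Rodr\'{\i}guez-Piazza, Gurariy, and refined by many authors) that the set of nowhere differentiable functions in $C[0,1]$ is $\mathfrak{c}$-lineable; for $p\ge 1$ one transfers this to $C^p[0,1]$ by $p$-fold indefinite integration, which preserves both linear independence and the fact that the $p$th derivative is nowhere differentiable. For (b), I use an explicit family of nowhere analytic $C^\infty$ functions of Cater/Bernal-Gonz\'alez type, parameterized by a continuum $T$ of parameters so that the parameterization gives a linearly independent family and every nontrivial linear combination retains the nowhere-analytic property. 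For (c) (here $\K=\C$), I construct a continuum-indexed family of the form $f_t(x)=\sum_{n=1}^\infty c_{n,t}\,e^{i\lambda_{n,t}x}$, with lacunary $\lambda_{n,t}$ chosen to grow so rapidly that at every $x_0\in[0,1]$ the sequence $|f_t^{(k)}(x_0)|^{1/k}$ is unbounded (forcing a Pringsheim singularity at $x_0$), and with $c_{n,t}$ decaying fast enough to guarantee convergence in every $C^\infty$-seminorm; by taking the supports $\{\lambda_{n,t}:n\ge 1\}$ pairwise disjoint for different $t\in T$ one secures linear independence, and the lacunary structure persists in any finite linear combination so the Pringsheim property is inherited.

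The principal obstacle is this last construction in part (c): obtaining simultaneously $C^\infty$-convergence, linear independence across a set of cardinality $\mathfrak{c}$, and \emph{uniform} Pringsheim behaviour at \emph{every} point of $[0,1]$ for every nonzero linear combination; the first two requirements pull the frequencies $\lambda_{n,t}$ and coefficients $c_{n,t}$ in opposite directions, so the quantitative growth estimates on the Fourier-type coefficients need to be carried out carefully, exactly as in the standard lacunary constructions of Pringsheim singular functions. Once maximal lineability of each $A_i$ is in place, however, Theorem \ref{maximal dense-lineable}(c) delivers maximal dense-lineability at once and the proof is complete.
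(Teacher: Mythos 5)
Your framework is exactly the paper's: take $B=\{$polynomials$\}$, which is dense in $C^p[0,1]$ for every $p\in\N_0\cup\{\infty\}$, note $A_i\cap B=\emptyset$ and $A_i+B\subset A_i$, and reduce via Theorem~\ref{maximal dense-lineable} (the paper invokes part (b), you invoke (c); for these metrizable separable spaces of dimension $\mathfrak c$ this is the same reduction) to showing that each $A_i$ is $\mathfrak c$-lineable. Your treatment of (a) coincides with the paper's: $\mathfrak c$-lineability of the nowhere differentiable functions in $C[0,1]$ (the paper cites the spaceability result of Fonf, Gurariy and Kadec \cite{FGK}), transferred to $C^p[0,1]$ by $p$-fold antiderivatives with fixed initial conditions. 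For (b) you gesture at the same known construction the paper cites (\cite{Ber1}, \cite{Cat}: ${\rm span}\{e_\alpha\varphi:\alpha>0\}$ with $\varphi$ nowhere analytic), which is acceptable at the level of citation, though you do not verify that nontrivial combinations remain nowhere analytic.

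The genuine gap is in (c). The paper disposes of this case by citing the known result that $A_3$ is $\mathfrak c$-lineable (\cite[Theorem 3.2]{Ber1}); you instead sketch a lacunary construction $f_t(x)=\sum_n c_{n,t}e^{i\lambda_{n,t}x}$ with pairwise disjoint frequency sets, and then explicitly concede that the decisive estimates are not carried out. The unproven step is precisely the heart of the matter: that \emph{every} nonzero finite combination $\sum_j \gamma_j f_{t_j}$ has Taylor radius zero at \emph{every} $x_0\in[0,1]$. Disjointness of the frequency sets gives linear independence, but it does not give this; with continuum many parameters you cannot keep all frequency sets multiplicatively well separated, so a combination may contain frequencies of comparable size with coefficients of arbitrary signs, and the usual dominant-frequency argument (the largest frequency present must overwhelm both the lower frequencies and the tails of each series, uniformly in $x_0$, for a suitable sequence of derivative orders) has to be rerun for arbitrary combinations with quantitative control on the $c_{n,t}$ and $\lambda_{n,t}$. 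As written, the assertion that ``the lacunary structure persists in any finite linear combination so the Pringsheim property is inherited'' is exactly what needs proof, and you acknowledge it is missing. The simplest repair is to do what the paper does and quote \cite[Theorem 3.2]{Ber1} for the $\mathfrak c$-lineability of $A_3$ (valid for $\K=\C$); alternatively, carry out your construction with an explicit choice such as rapidly growing frequencies depending injectively on $t$ and coefficients decaying fast enough that the top-frequency term dominates every admissible combination, which is essentially the argument of that reference.
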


\begin{proof}
Since the set $B := \{$polynomials$\}$ is dense in $C^p[0,1]$ for all $p \in \N_0 \cup \{\infty\}$, it is enough, according
to Theorem \ref{maximal dense-lineable}(b), to prove that $A_1, \, A_2$ and $A_3$ are ${\mathfrak c}$-lineable
(because $C^p[0,1]$ is metrizable, separable and complete, and ${\mathfrak c} = {\rm dim} (C^p [0,1])$ for all
$p \in \N_0 \cup \{\infty\}$). It is plain that $A_i \cap B = \emptyset$ for $i=1,2,3$.

\vskip .15cm

\noindent (a) From the spaceability of the set of nowhere differentiable functions in $C[0,1]$ (obtained by Fonf
{\it et al.} in \cite{FGK}) it follows the ${\mathfrak c}$-lineability of $A_1$ in the case $p=0$ (alternatively, see
a constructive proof in \cite{JMS}). Let $C$ be a ${\mathfrak c}$-dimensional vector subspace of \,$C[0,1]$ consisting, except for zero,
of nowhere differentiable functions on $[0,1]$. If $p \in \N$ and we let $\varphi_p$ denote the unique antiderivative of
order $p$ of a continuous function $\varphi :[0,1] \to \R$ such that $\varphi_p^{(k)}(0) = 0$ $(k \in \{0,...,p-1\})$, then it is easy to
see that $\{\varphi_p: \, \varphi \in C\}$ is a ${\mathfrak c}$-dimensional vector space contained in $A_1 \cup \{0\}$.

\vskip .15cm

\noindent (b) The maximal dense-lineability of $A_2$ is in fact established explicitly in \cite[Theorem 3.1]{Ber1},
where the ${\mathfrak c}$-lineability of $A_2$ is part of the proof: if $\varphi$ is nowhere analytic and
$e_{\alpha}(x) := e^{\al x}$ $(\al > 0)$ then ${\rm span} \{e_\al \, \varphi : \, \al > 0\} \subset A_2$ and
${\rm dim} ({\rm span} \{e_\al \, \varphi : \, \al > 0\}) = {\mathfrak c}$ (also Cater \cite{Cat} had obtained in 1984 such a
${\mathfrak c}$-dimensional space).

\vskip .15cm

\noindent (c) The ${\mathfrak c}$-lineability of $A_3$ is stated in \cite[Theorem 3.2]{Ber1} for $\K = \C$.
\end{proof}

Let us briefly turn our attention to {\it divergent Fourier series}. The existence of continuous functions
$f:\T \to \C$ ($\T := \{z=e^{it}: \, t \in [0,2\pi ]\}$, the unit circle) whose Fourier series
$\sum_{k=-\infty}^\infty \hat{f} (k) e^{ikt}$ diverges at some points is well known. Denote $S_n(f,t) :=
\sum_{k=-n}^n \hat{f} (k) e^{ikt}$ $(n \in \N )$, the partial Fourier sums. If $E \subset \T$, let
$${\cal F}_E :=
\{f \in C(\T ): \, \{S_n(f,t)\}_{n \ge 1} \hbox{ is unbounded for each } e^{it} \in E\}.
$$
In 2005, Bayart \cite{Bay1,Bay2} proved that, given $E \subset \T$ with Lebesgue measure zero, the set ${\cal F}_E$ is dense-lineable
and spaceable (Aron {\it et al.}~showed in \cite{APS} that ${\cal F}_E \cup \{0\}$ contains, in fact, an infinitely
generated dense algebra). If $E \subset \T$, consider the smaller set
$${\cal F}_{pE} := \{f \in C(\T ): \, \{(S_n(f,\cdot )|_E\}_{n \ge 1} \hbox{ is dense in } \C^E\},$$
where $\C^E$ is endowed with the topology of pointwise convergence. In 2010, M\"uller \cite{Mul}
proved that if $E$ is countable then ${\cal F}_{pE}$ is residual in $C(\T )$, while the first author \cite{Ber3}
demonstrated that ${\cal F}_{pE}$ is spaceable and maximal dense-lineable. We remark that, once the
spaceability is established, the maximal dense-lineability can be obtained from Theorem \ref{maximal dense-lineable}:
just choose $X = C(\T )$, $A = {\cal F}_{pE}$ and $B = \{$the trigonometric polynomials$\}$.

\vskip .15cm

\subsection{Holomorphic functions regular up to the boundary.}
A similar result holds for the family of non-continuable boundary-regular holomorphic functions. Assume that $G$ is
a domain in $\C$ and consider the Fr\'echet space $H(G)$ (a Fr\'echet space is a locally convex F-space).
Recall that for $f \in H(G)$ we have that $f \in H_e(G)$
if and only if, for all $z_0 \in G$, the radius of convergence $\rho (f,z_0)$ of the Taylor series of $f$ with center $z_0$
equals the euclidean distance $d(z_0,\partial G)$ between $z_0$ and the boundary $\partial G$ of $G$.
It was mentioned at the beginning of Section 2 that
Kierst and Szpilrajn showed the residuality of $H_e(G)$ (this result was extended by Kahane \cite{Kah} in 2000 to certain subspaces of $H(G)$)
and that in \cite{AGM} the dense-lineability and the spaceability of $H_e(G)$ were established (in \cite{Val} additional
topological pro\-per\-ties are found for the dense subspace within $H_e(G)$, and in \cite{BerS} spaces of holomorphic functions in
$\D$ are investigated). With more sophisticated methods --including the use of Arakelian's approximation theorem--
the first author (see \cite{BerB}) was able to state the ma\-xi\-mal dense-lineability of $H_e(G)$ in $H(G)$.

\vskip .15cm

Consider now the space $A^\infty (G)$ of {\it boundary-regular holomorphic functions} in $G$, that is, $f \in A^\infty (G)$
if and only if $f \in H(G)$ and each derivative $f^{(N)}$ $(N \ge 0)$ extends continuously on the closure $\overline{G}$ of $G$.
Then $A^\infty (G)$ can be endowed with a natural to\-po\-lo\-gy, namely, the topology of uniform convergence of functions
and all their derivatives on each compact subset of $\overline{G}$. In 1980 Chmielowski \cite{Chm} proved that if $G$
is regular (i.e.~$\overline{G}^0 = G$) then $A^\infty (G) \cap H_e(G)$ is nonempty, and finally Valdivia \cite{Val2} showed in 2009
that $A^\infty (G) \cap H_e(G)$ is in fact dense-lineable in $A^\infty (G)$.
By assuming additional conditions on $G$ (under which the authors of \cite{BCL} had obtained dense-lineability in 2008),
we are going to see that the last conclusion can be reinforced.
We say that a domain $G \subset \C$ is {\it finite-length} provided that there is $M \in (0,+\infty )$ such that
for any pair $a,b \in G$ there exists a curve $\gamma \subset G$ joining $a$ to $b$ for which ${\rm length} (\gamma ) \le M$.

\begin{theorem}
If $G \subset \C$ is a regular finite-length domain such that $\C \setminus \ovl{G}$ is connected then
$A^\infty (G) \cap H_e(G)$ is maximal dense-lineable in $A^\infty (G)$.
\end{theorem}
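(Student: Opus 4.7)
The strategy is to apply Theorem~\ref{maximal dense-lineable}(c) with $X := A^\infty(G)$, $A := A^\infty(G) \cap H_e(G)$, and $B := \mathcal{P}$, the space of complex polynomials. Since $G$ is finite-length it is bounded, so $\overline{G}$ is compact and $A^\infty(G)$ is a separable Fr\'echet space (hence metrizable); being infinite-dimensional and separable, $\mathrm{dim}(A^\infty(G)) = \mathfrak{c}$. In view of Theorem~\ref{maximal dense-lineable}(c), it suffices to verify four things: $B$ is dense-lineable in $X$, $A + B \subset A$, $A \cap B = \emptyset$, and $A$ is $\mathfrak{c}$-lineable.

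The first three items are cheap. Under the standing hypotheses on $G$, the result of \cite{BCL} asserts that $\mathcal{P}$ is dense in $A^\infty(G)$, and since $\mathcal{P}$ is a vector subspace it is automatically dense-lineable. For $A + B \subset A$: $A^\infty(G)$ is closed under addition, and if $f \in H_e(G)$ and $p \in \mathcal{P}$, then any analytic continuation of $f+p$ past the boundary would---upon subtraction of the entire function $p$---extend $f$ in the same way, contradicting $f \in H_e(G)$. For $A \cap B = \emptyset$: every polynomial is entire and so cannot lie in $H_e(G)$.

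The substantive step is the $\mathfrak{c}$-lineability of $A$, and that is where I expect the main obstacle to lie; the natural difficulty is exhibiting an uncountable linearly independent family in $A$ from the single function supplied by Chmielowski's theorem. I would overcome it via an exponential multiplier trick: using \cite{Chm} fix some $f_0 \in A^\infty(G) \cap H_e(G)$ (so $f_0 \not\equiv 0$), and for each $\alpha \in (0,\infty)$ set $g_\alpha(z) := e^{\alpha z} f_0(z)$. Each $g_\alpha$ lies in $A^\infty(G)$ because $e^{\alpha z}$ does (all its derivatives are entire, hence continuous on the compact $\overline{G}$) and $A^\infty(G)$ is an algebra by Leibniz's rule. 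Each $g_\alpha$ lies in $H_e(G)$ because $e^{-\alpha z}$ is entire and $e^{\alpha z}$ nowhere zero: any holomorphic extension of $g_\alpha$ to some disk $D(z_0,r)$ with $r > d(z_0,\partial G)$ would, on multiplication by $e^{-\alpha z}$, yield the same extension of $f_0$, which is forbidden. Finally, $\{g_\alpha\}_{\alpha>0}$ is linearly independent: any relation $f_0(z)\sum_j c_j e^{\alpha_j z} \equiv 0$ on $G$ forces $\sum_j c_j e^{\alpha_j z} \equiv 0$ by the identity theorem (since $f_0 \not\equiv 0$ and $G$ is connected), and specialisation to $z \in \mathbb{R}$ together with the classical linear independence of distinct real exponentials then gives $c_j = 0$. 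Therefore $\mathrm{span}\{g_\alpha : \alpha > 0\} \subset A \cup \{0\}$ has dimension $\mathfrak{c} = \mathrm{dim}(X)$, and Theorem~\ref{maximal dense-lineable}(c) delivers the maximal dense-lineability of $A^\infty(G) \cap H_e(G)$ in $A^\infty(G)$.
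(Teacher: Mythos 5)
Your overall strategy is the paper's own: Theorem~\ref{maximal dense-lineable} with $B=\{\hbox{polynomials}\}$, Chmielowski's function $f_0\in A^\infty(G)\cap H_e(G)$, and the exponential multipliers $e^{\alpha z}$. The cheap verifications ($A+B\subset A$, $A\cap B=\emptyset$, density of the polynomials in $A^\infty(G)$ under the finite-length and connectedness hypotheses --- which the paper takes from \cite{MeN} rather than \cite{BCL} --- and the linear independence of the $g_\alpha$) are all fine. But there is a genuine gap in the decisive step: from ``each generator $g_\alpha=e^{\alpha z}f_0$ lies in $H_e(G)$'' plus linear independence you conclude ${\rm span}\,\{g_\alpha:\alpha>0\}\subset A\cup\{0\}$. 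That inference is invalid, because $H_e(G)$ is not stable under linear combinations (a sum of non-extendable functions can perfectly well be extendable), so $\mathfrak c$-lineability requires showing that \emph{every} nonzero combination $f=h\,f_0$ with $h(z)=\sum_{j=1}^N c_j e^{\alpha_j z}$ is non-extendable. Your ``multiply by $e^{-\alpha z}$'' trick does not survive this generalization: $h$ is a nonzero entire function, but in general it has zeros, so one cannot divide by it globally, nor necessarily near the particular boundary point at which a putative extension of $f$ lives.

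This is exactly the substantive half of the paper's proof, and it is missing from yours. The repair runs as follows: assume $f\notin H_e(G)$, so the Taylor series $S_a$ of $f$ at some $a\in G$ converges on a ball $B(a,r)$ with $r>d(a,\partial G)$. The zero set of $h$ is discrete, and --- here the regularity of $G$ is used --- the nearest boundary point $b$ is not isolated in $\partial G$; hence one can choose $c\in\partial G$ and $\varepsilon>0$ with $B(c,\varepsilon)\subset B(a,r)$ and $h$ zero-free on $B(c,\varepsilon)$. Picking $\zeta\in B(c,\varepsilon/2)\cap G$, the quotient $S_\zeta/h$ is holomorphic on $B(\zeta,\varepsilon/2)$ and coincides with $f_0$ near $\zeta$, which forces $\rho(f_0,\zeta)\ge\varepsilon/2>d(\zeta,\partial G)$, contradicting $f_0\in H_e(G)$. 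Once you insert this local-division argument your proof closes and coincides with the paper's. (Two harmless slips besides: the attribution of polynomial density should be to \cite{MeN}, and your ``specialise to $z\in\R$'' step needs the identity principle first, to extend $\sum_j c_je^{\alpha_j z}=0$ from the open set $G$ to all of $\C$, since $G$ need not meet $\R$.)
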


\begin{proof}
Firstly, let us prove that $A^\infty (G) \cap H_e(G)$ is maximal lineable. Since $G$ is regular, we can choose $\varphi \in
A^\infty (G) \cap H_e(G)$. Consider the functions $e_\al (z) := e^{\al z}$ $(\al > 0)$. The functions $e_\al \varphi$ are
linearly independent. Indeed, let $(c_1,...,c_N) \in \C^N \setminus \{(0,...,0)\}$ and
different $\al_1, \dots , \al_N \in (0,+\infty )$ such that $\sum_{i=1}^N c_i e_{\al_i} \varphi = 0$ on $G$.
Without loss of generality, we can assume that $N \ge 2$, $c_1 \ne 0$ and $\al_1 > \al_i$ if $i \ge 2$.
Since $\varphi \ne 0$ and $H(G)$ is an integrity domain, we have $\sum_{i=1}^N c_i e_{\al_i} = 0$ on $G$.
By the Identity Principle, we have $\sum_{i=1}^N c_i e^{\al_i z} = 0$ for all $z \in \C$.
In particular,
$$
c_1 + c_2e^{(\al_2-\al_1)x} + \cdots + c_Ne^{(\al_N-\al_1)x} = 0 \hbox{ \ for all } x > 0.
$$
Letting $x \to +\infty$, we get $c_1 + 0 = 0$, a contradiction, which shows the desired linear independence.
Now, set
$$M := {\rm span} \{e_\al \, \varphi : \, \al > 0\} \subset A^\infty (G).$$
Then ${\rm dim} (M) = {\mathfrak c} =
{\rm dim} (A^\infty (G))$. If $f \in M \setminus \{0\}$ then there are
$(c_1,...,c_N) \in \C^N \setminus \{(0,...,0)\}$ and
different $\al_1, \dots , \al_N \in (0,+\infty )$ such that $f = \sum_{i=1}^N c_i e_{\al_i} \varphi$.
Suppose, by way of contradiction, that $f \notin H_e(G)$. Let us denote by $S_{z_0}$ the sum of the Taylor
series of $f$ with center at $z_0$. Then there are a point $a  \in G$ and a number $r > d(a,\partial G)$
such that $S_a \in H(B(a,r))$. Of course, $S_a = f$ in $B(a,|a-b|)$, where $b$ is a point on $\partial G$ such
that $|a-b| = d(a,\partial G)$. Therefore there are a point $c \in \partial G$ and a number $\ve > 0$ with
$B(c,\ve ) \subset B(a,r)$ and $\sum_{i=1}^N c_i e^{\al_i z} \ne 0$ for all $z \in B(c,\ve )$;
indeed, $B(a,r)$ is a neighborhood of $b$, the point $b$ is not isolated in $\partial G$
(by the regularity of $G$), and the set of zeros of $\sum_{i=1}^N c_i e_{\al_i}$ in $\C$ is discrete.
Now take a point $\zeta \in B(c,\ve /2) \cap G$. Then $B(\zeta , \ve /2) \subset B(c,\ve ) \subset B(a,r)$ and
$\sum_{i=1}^N c_i e^{\al_i z} \ne 0$ for all $\zeta \in B(\zeta ,\ve /2)$. The function $S_\zeta$ equals $f$
in a neighborhood of $\zeta$, whence $S_\zeta /\sum_{i=1}^N c_i e_{\al_i}$ equals $\varphi$ in a neighborhood
of $\zeta$. We get from the non-extendability of $\varphi$ that
$$
{\ve \over 2} > d(\zeta ,c) \ge d(\zeta ,\partial G) = \rho (\varphi ,\zeta ) =
\rho \big({S_\zeta \over \sum_{i=1}^N c_i e_{\al_i}},\zeta \big) \ge {\ve \over 2}.
$$
This contradiction shows that $f \in H_e(G)$, so $M \setminus \{0\} \subset A^\infty (G) \cap H_e(G)$ and the
maximal lineability of the last set is guaranteed.

\vskip .15cm

According to \cite[Proof of Theorem 4]{MeN}, under the assumptions  on $G$
(specifically, $G$ is finite-length and $\C \setminus \ovl{G}$
is connected) the set of of polynomials is dense in $A^\infty (G)$.
Now, it is sufficient to apply Theorem \ref{maximal dense-lineable} with $X = A^\infty (G)$, $A = A^\infty (G) \cap H_e(G)$
and $B = \{$polynomials$\}$.
\end{proof}

\subsection{Sets of hypercyclic vectors.}
Our next application concerns hypercyclicity. The notion can be easily extended to sequences of operators, see
\cite{GrP}: given two (Hausdorff) to\-po\-lo\-gi\-cal vector spaces $X,Y$, a sequence $(T_n) \subset L(X,Y) :=
\{$continuous linear mappings $X \to Y\}$ is said to be {\it hypercyclic} provided that there is a vector
$x_0 \in X$ (called hypercyclic for $(T_n)$) such that the orbit $\{T_nx_0: \, n \in \N\}$ of $x_0$ under $(T_n)$
is dense in $Y$. We denote
$$HC((T_n)) = \{x \in X: \, x \hbox{ is hypercyclic for \ } (T_n)\}.$$
Note that if $X=Y$
and $T:X \to X$ is an operator (that is, $T \in L(X) := L(X,X)$), then $T$ is hypercyclic if and only if the
sequence $(T^n)$ of powers of $T$ is hypercyclic; moreover, $HC(T) = HC((T^n))$.
Only separable infinite dimensional topological vector spaces can support hypercyclic operators, see \cite{GrP}.
At the beginning of Section 2
we mentioned the Herrero-Bourdon-B\`es-Wengenroth theorem asserting the dense-lineability of $HC(T)$. The first
author \cite{BerU} proved that $HC(T)$ is {\it maximal} dense-lineable provided that $T$ is hypercyclic on a
{\it Banach} space (again, the dense subspace obtained in \cite{BerU} is $T$-invariant). As for sequences $(T_n) \subset L(X,Y)$,
it was demonstrated in \cite{BerA} that if $Y$ is metrizable and each subsequence $(T_{n_k})$
(with $n_1<n_2<\cdots$) is hypercyclic then $HC((T_n))$ is lineable, and that if $X$ and $Y$ are metrizable and
separable and $HC((T_{n_k}))$ is dense for each subsequence $(T_{n_k})$ of $(T_n)$ then $HC((T_n))$ is dense-lineable.
In Theorem \ref{hypercyclic-mdl} below it can be seen how spaceability, when it happens, comes in our help to obtain maximality.
But, prior to this, let us recall a recent, quantified version of hypercyclicity.

\vskip .15cm

According to Bayart and Grivaux \cite{BayG}, an operator $T$ on a topological vector space $X$ is said to be
{\it frequently hypercyclic} provided
there exists a vector \,$x_0 \in X$ \,such that
$$
\liminf_{n \to \infty} {{\rm card} \{k \in \{1,2,...,n\} : \, T^n x_0 \in U\} \over n} > 0
$$
for every nonempty open subset $U$ of $X$.
In this case, $x_0$
is called a frequently hypercyclic vector for $T$, and the set of these
vectors will be denoted by $FHC(T)$. The extension of the notion of frequent hypercyclicity to
sequences $(T_n) \subset L(X,Y)$ is obvious: replace $T^n$ by $T_n$ in the display above, and fix $U$ among the nonempty open subsets of $Y$. The corresponding set
of frequent hypercyclic vectors in $X$ is denoted by $FHC((T_n))$. In \cite{BayG} it is shown that if $T$ is a
frequent hypercyclic operator on a separable F-space $X$ then $FHC(T)$ is dense-lineable (once more, the
dense subspace obtained is $T$-invariant).

\begin{theorem} \label{hypercyclic-mdl}
\begin{enumerate}
\item[\rm (a)] Let $X$ be an infinite-dimensional separable F-space and \,$Y$ be a metrizable separable topological vector space.
Assume that $(T_n) \subset L(X,Y)$ and that
there is a dense subset $D \subset X$ such that the $(T_n)$ converges pointwise on $D$. If $HC((T_n))$ {\rm (}resp.~$FHC((T_n))${\rm )} is spaceable
then it is maximal dense-lineable.
\item[\rm (b)] Let $X$ be an F-space. Assume that $T \in L(X)$. If $HC(T)$ is spaceable and there is a sequence
$\{n_1 < n_2 < \cdots \} \subset \N$ such that $(T^{n_k})$ converges pointwise on some dense subset of $X$ then $HC(T)$ is maximal dense-lineable.
If \,$FHC(T)$ is spaceable and $(T^n)$ converges pointwise on some dense subset of $X$ then $FHC(T)$ is maximal dense-lineable.
\end{enumerate}
\end{theorem}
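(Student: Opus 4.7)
The plan is to invoke Theorem~\ref{maximal dense-lineable}(c): each $X$ under consideration is an F-space, hence metrizable, so it suffices to exhibit a dense-lineable subset $B\subset X$ disjoint from $A$ (the relevant set of hypercyclic or frequently hypercyclic vectors) such that $A$ is stronger than $B$, provided $A$ is maximal lineable. Maximal lineability is immediate from spaceability: the closed subspace $M\subset A\cup\{0\}$ that spaceability provides has $\dim M=\mathfrak{c}$, while hypercyclicity forces the ambient space to be separable and infinite-dimensional, so $\dim X=\mathfrak{c}=\dim M$.

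The natural candidate is
\[
B := \{y \in X : (T_n y)_n \text{ converges in } Y\}
\]
in case (a), with the obvious variants $(T^n y)_n$ in the FHC part of (b) and $(T^{n_k} y)_k$ in the HC part of (b). By continuity and linearity $B$ is a vector subspace, and it contains the prescribed dense set $D$, so it is a dense subspace of $X$, in particular dense-lineable. For (a) and for the FHC part of (b) the disjointness $A\cap B=\emptyset$ is automatic: a (frequently) hypercyclic orbit is dense in $Y$, and a dense sequence cannot converge in a Hausdorff space with more than one point, which $Y$ must be for hypercyclicity to be nontrivial.

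The heart of the argument is $A+B\subset A$. Fix $x\in A$, $y\in B$ with limit $y^{*}$, a nonempty open set $V\subset Y$, a point $v\in V$, and a balanced $0$-neighborhood $W$ with $v+W+W\subset V$. Choose $N$ so that $T_n y\in y^{*}+W$ for all (sub)indices $n\ge N$. Any index $n\ge N$ with $T_n x\in v-y^{*}+W$ then yields $T_n(x+y)=T_n x+T_n y\in v+W+W\subset V$. In the hypercyclic case the cofinite tail of the dense orbit of $x$ is still dense (since $Y$ has no isolated points), so such $n$ exists; in the frequently hypercyclic case, the set of $n$ with $T_n x\in v-y^{*}+W$ has positive lower density by definition, and removing finitely many indices does not affect this. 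Hence $x+y\in A$, and Theorem~\ref{maximal dense-lineable}(c) delivers maximal dense-lineability in (a) and in the FHC part of (b).

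The main obstacle is the HC part of (b): here $(T^{n_k} y)$ converges only along the subsequence, so the argument above forces us to restrict to indices $n=n_k$, and density of the \emph{subsequential} orbit $\{T^{n_k} x\}_k$ does not follow from $x\in HC(T)$; moreover the disjointness $A\cap B=\emptyset$ also becomes nontrivial, since a hypercyclic orbit may well admit a convergent subsequence along a prescribed $(n_k)$. My plan for this case is to reduce it to part (a) applied to the sequence $S_k:=T^{n_k}$: since trivially $HC((T^{n_k}))\subset HC(T)$, it is enough to prove that $HC((T^{n_k}))$ itself is spaceable. This refinement should follow from the constructions underlying the spaceability of $HC(T)$ --- basic sequences produced via a version of the hypercyclicity criterion --- which typically yield vectors simultaneously hypercyclic for $T$ and for every subsequence $(T^{n_k})$ behaving well on the dense set $D$. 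Once such a closed subspace $M\subset HC((T^{n_k}))\cup\{0\}$ is in place, part (a) applies verbatim to $(S_k)$ and produces the required dense subspace of dimension $\mathfrak{c}$ inside $HC(T)\cup\{0\}$.
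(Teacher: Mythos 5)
Your treatment of part (a) and of the frequently hypercyclic half of part (b) is exactly the paper's argument: spaceability plus Baire's theorem gives maximal lineability, $B=\{y\in X:\ (T_ny)\ \text{converges}\}$ is a dense vector subspace (hence dense-lineable) containing $D$, the ``dense orbit plus convergent sequence is dense'' computation gives $A+B\subset A$ in the hypercyclic case and the positive-lower-density computation gives it in the frequently hypercyclic case, and Theorem~\ref{maximal dense-lineable} concludes. (The paper does not explicitly record $A\cap B=\emptyset$, which is needed for the dimension count in Theorem~\ref{maximal dense-lineable}(c); your verification of it is the right one.)

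The difficulty you isolate in the $HC(T)$ half of (b) is genuine, and it is not resolved by the paper either: the paper's entire proof of that clause is precisely the one-line reduction you describe (set $T_k:=T^{n_k}$, apply (a), use $HC((T^{n_k}))\subset HC(T)$). As you observe, applying (a) to $(T^{n_k})$ requires the spaceability of $HC((T^{n_k}))$, whereas the hypothesis only provides spaceability of $HC(T)$, and a closed subspace inside $HC(T)\cup\{0\}$ need not lie inside $HC((T^{n_k}))\cup\{0\}$. The direct attempt with $A=HC(T)$ and $B=\{y:\ (T^{n_k}y)_k\ \text{converges}\}$ breaks down both at $A+B\subset A$ (one controls $T^ny$ only along $(n_k)$, while density of the subsequential orbit $\{T^{n_k}x\}_k$ is not part of $x\in HC(T)$) and at $A\cap B=\emptyset$, exactly as you say. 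Your proposed repair --- that the concrete constructions of hypercyclic subspaces actually produce closed subspaces inside $HC((T^{n_k}))\cup\{0\}$ for a suitable $(n_k)$ --- is consistent with how such subspaces are built in the literature (the criteria work along a prescribed subsequence), but as written it is a plan rather than a proof, so that clause remains open in your write-up just as it does in the paper's. It is worth noting that every application made in Section 4 (the Corollary on Fr\'echet spaces and the Proposition on convolution operators) invokes either the full sequence $(T^n)$ or the $FHC$ clause, so the gap does not affect the paper's consequences.
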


\begin{proof}
Part (b) follows from (a) by considering $T_n := T^n$ for frequent hypercyclicity, and taking $T_k := T^{n_k}$ for mere hypercyclicity,
together with the trivial fact $HC(T) \supset HC((T^{n_k}))$. Observe also that if $T$ is hypercyclic then $X$ must be separable
and infinite-dimensional.

\vskip .15cm

Let us prove (a). Suppose first that $HC((T_n))$ is spaceable. Then there is a closed infinite dimensional vector space $M \subset HC((T_n)) \cup \{0\}$. We have ${\rm dim} (X) = {\mathfrak c} = {\rm dim} (M)$ due to Baire's theorem. Therefore $A := HC((T_n))$ is maximal lineable. Note that
$B := \{x \in X: \, (T_nx)$ converges$\}$ is a vector space, and it is dense because $B \supset D$. Hence $B$ is dense-lineable. Now, trivially, if
a vector $x_0$ has dense orbit and $y_0 \in B$ then $\{T_n(x_0+y_0) = T_nx_0 + T_ny_0\}_{n \ge 1}$ is also dense, so $x_0+y_0 \in A$. In other words, $A + B \subset A$. Then $HC((T_n))$ is maximal dense-lineable by Theorem \ref{maximal dense-lineable}. Now, assume that $FHC((T_n))$ is spaceable. Take $A := FHC((T_n))$ and $B$ as before. Again by Theorem \ref{maximal dense-lineable}, the only property to show is $A+B \subset A$. Fix $x_0 \in A$ and $y_0 \in B$. Given a nonempty open set $U \subset Y$, choose any $u_0 \in U$ and a neighborhood $V$ of $0$ in $Y$ such that $V+V \subset U-u_0$.
Then $\liminf_{n \to \infty} {\rm card} \{k \in \{1,2,...,n\} : \, T_n x_0 \in V+u_0-z_0\}/n > 0$,
where $z_0 := \lim_{n \to \infty} T_ny_0$. Since $T_n y_0 \in V+z_0$ for
$n$ large enough, we obtain that $T_n (x_0 + y_0) = T_nx_0 + T_ny_0 \in (V+u_0-z_0)+(V+z_0) \subset U$ whenever $T_n x_0 \in V+u_0-z_0$ and $n$ is large enough. This yields $\liminf_{n \to \infty} {\rm card} \{k \in \{1,2,...,n\} : \, T_n (x_0+y_0) \in U\}/n > 0$, that is, $x_0 + y_0 \in A$.
\end{proof}

Now, we can establish a general existence result for {\it Fr\'echet} \,spaces.

\begin{corollary}
Let $X$ be a separable infinite dimensional Fr\'echet space. Then $X$ supports an operator $T$ such that $HC(T)$ is maximal dense-lineable.
\end{corollary}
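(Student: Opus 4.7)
The strategy is to apply Theorem \ref{hypercyclic-mdl}(b): I need to produce one operator $T \in L(X)$ such that (i)~$HC(T)$ is spaceable and (ii)~some power-subsequence $(T^{n_k})$ converges pointwise on a dense subset of $X$. Once such a $T$ is in hand, the corollary follows in a single line by invoking that theorem.

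For (ii), I would first appeal to the Bonet--Peris theorem (1998), asserting that every separable infinite-dimensional Fr\'echet space $X$ admits a hypercyclic operator, and moreover the one they construct satisfies the Hypercyclicity Criterion: there exist dense subsets $X_0,Y_0\subset X$, mappings $S_n:Y_0\to X$ and an increasing sequence $(n_k)\subset\N$ such that $T^{n_k}x\to 0$ for every $x\in X_0$, $S_{n_k}y\to 0$ for every $y\in Y_0$, and $T^{n_k}S_{n_k}y\to y$ for every $y\in Y_0$. The first of these three conditions supplies, free of charge, the dense set $X_0$ on which $T^{n_k}\to 0$ pointwise, so the second hypothesis of Theorem \ref{hypercyclic-mdl}(b) is automatic.

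For (i), I would invoke the Fr\'echet-space hypercyclic-subspace theorem (Petersson, building on the Banach work of Montes-Rodr\'{\i}guez and Gonz\'alez--Le\'on--Montes): the Bonet--Peris-type construction can be arranged so that, in addition to the Hypercyclicity Criterion, there is a closed infinite-dimensional subspace $M\subset X$ on which $T^{n_k}\to 0$, and for which $M\setminus\{0\}\subset HC(T)$. In particular $HC(T)$ is spaceable. Having secured both (i) and (ii) for the same operator $T$, Theorem \ref{hypercyclic-mdl}(b) applies and gives that $HC(T)$ is maximal dense-lineable, which is the desired conclusion.

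The main obstacle in this plan is not the application of Theorem \ref{hypercyclic-mdl}(b), which is essentially mechanical once the two ingredients are isolated, but rather the non-trivial fact that on an \emph{arbitrary} separable infinite-dimensional Fr\'echet space one can choose a single hypercyclic operator simultaneously enjoying the Hypercyclicity Criterion and possessing a hypercyclic (closed, infinite-dimensional) subspace. This is precisely the content of the hypercyclic-subspace problem in the Fr\'echet setting, and the corollary ultimately rests on the availability of that result.
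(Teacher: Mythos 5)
Your overall strategy is exactly the paper's: produce a single operator $T$ for which $HC(T)$ is spaceable and some power-(sub)sequence converges pointwise on a dense set, then invoke Theorem \ref{hypercyclic-mdl}(b); the Bonet--Peris operator and its Hypercyclicity Criterion supply the dense set of pointwise convergence exactly as in the paper. The gap is in how you secure spaceability. First, note that satisfying the Hypercyclicity Criterion does not by itself yield a hypercyclic subspace -- Montes-Rodr\'{\i}guez's classical example of scalar multiples of the backward shift on $\ell_2$ satisfies the Criterion yet has no closed infinite-dimensional subspace of hypercyclic vectors -- so the phrase ``can be arranged'' carries the entire weight of the argument and is precisely what must be justified. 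Second, the result you cite for this (Petersson's Fr\'echet-space hypercyclic-subspace theorem) is proved for Fr\'echet spaces admitting a \emph{continuous norm}; it does not cover separable infinite-dimensional Fr\'echet spaces without a continuous norm, the basic example being $\omega=\K^{\N}$. Since the corollary claims the conclusion for \emph{every} separable infinite-dimensional Fr\'echet space, your argument as written leaves that class untreated.

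The paper closes this gap by citing Menet's recent theorem (hypercyclic subspaces on Fr\'echet spaces without continuous norm), which shows the Bonet--Peris-type operator can indeed be chosen with $HC(T)$ spaceable, and by observing from Menet's proof that when $X$ is not isomorphic to $\omega$ the powers $(T^n)$ converge pointwise on a dense set; the remaining case $X\cong\omega$ is handled separately via B\`es--Conejero, who proved that the backward shift $B$ on $\omega$ has spaceable $HC(B)$, together with the trivial fact that $B^n\to 0$ on the dense set of finitely supported sequences, after which one conjugates by the isomorphism and applies Theorem \ref{hypercyclic-mdl}(b) again. So your plan is sound in shape, but to make it a proof you must either replace the appeal to Petersson by a result covering the no-continuous-norm case (Menet) or add the explicit case distinction for $X\cong\omega$ as the paper does.
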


\begin{proof}
In 1998, Bonet and Peris \cite{BoP} proved that if $X$ is as in the hypothesis then there exists $T \in L(X)$ such that
$T$ is hypercyclic. Recently, Menet \cite[Theorem 2.4]{Men2} has shown that $T$ can be chosen such that $HC(T)$ is spaceable.
If $X$ is not isomorphic to $\omega := \K^\N$, it is observed in \cite[Proof of Theorem 2.4]{Men2} that the operator $T$ obtained there
(which is based on the construction in \cite{BoP})
satisfies that $(T^n)$ converges pointwise on a dense set, so Theorem \ref{hypercyclic-mdl} applies. If $X$ is isomorphic to $\omega$, let $S:X \to \omega$ be such an isomorphism.
B\`es and Conejero \cite{BesC} demonstrated that for the backward shift $B:(x_1,x_2,...) \in \omega \mapsto (x_2,x_3,...) \in \omega$
one has that $HC(B)$ is spaceable. Trivially, $B^n \to 0$ poinwise on the dense subset $D_0 := \{(x_n) \in \omega : \, x_n \ne 0$ only for finitely many $n\}$. It follows that the operator $T := S^{-1}BS:X \to X$ satisfies that $HC(T)$ is spaceable and $T^n \to 0$ pointwise on the dense set $S^{-1}(D)$. A new application of Theorem \ref{hypercyclic-mdl} yields the conclusion.
\end{proof}

Of course, in order to apply Theorem \ref{hypercyclic-mdl}, it is important to have to our disposal a number of results on spaceability of
the set of hypercyclic/frequently hypercyclic vectors: the interest reader is referred to \cite{BaM}, \cite{BoG} and \cite{GrP}. As a first example,
note that the maximal dense-lineability of the family ${\cal F}_{pE}$ given in Subsection 4.2 may be obtained by using the mentioned theorem.

\vskip .15cm

Let us give examples of operators on non-Banach spaces whose sets of hypercyclic vectors are maximal dense-lineable.
To start with, we consider the space $H(\C^N)$ of entire functions $\C^N \to \C$, endowed with the compact-open topology.
Recall that each $a \in \C^N$ generates a
translation operator $\tau_a : f \in H(\C^N) \mapsto f( \cdot + a) \in H(\C^N)$. Also, if $D$ denotes the derivative operator
on $H(\C )$
(i.e.~$Df = f'$), then every polynomial $P(z) = a_0+a_1z+ \cdots +a_nz^n$ generates a finite order differential operator
$P(D) := a_0 I + a_1 D + \cdots + a_n D^n$, where $I$ is the identity operator.

\begin{proposition}
Assume that $T:H(\C^N) \to H(\C^N)$ is an operator that commutes with translations,
that is, $T \tau_a = \tau_a T$ for all $a \in \C^N$. Assume also that $T$ is not a scalar multiple of the identity. We have:
\begin{enumerate}
\item[\rm (a)] The set \,$HC(T)$ is maximal dense-lineable.
\item[\rm (b)] If \,$N > 1$ then \,$FHC(T)$ is maximal dense-lineable.
\item[\rm (c)] If \,$N=1$ and \,$T$ is not a finite order differential operator then \,$FHC(T)$ is maximal dense-lineable.
\end{enumerate}
\end{proposition}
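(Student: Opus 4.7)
The plan is to apply Theorem \ref{hypercyclic-mdl}(b) in each of (a), (b), (c). For each case I need to verify: (i) the spaceability of $HC(T)$ or $FHC(T)$, and (ii) the existence of a dense subset of $H(\C^N)$ on which a subsequence of $(T^n)$ (respectively, the full sequence $(T^n)$) converges pointwise. I would establish (ii) in a unified way first, and then address (i) case by case.

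For ingredient (ii) I would exploit the structural description of translation-commuting operators on $H(\C^N)$. By a classical representation theorem, every continuous linear operator on $H(\C^N)$ that commutes with all translations has the form $T = \Phi (D_1, \dots ,D_N)$, where $\Phi$ is an entire function of exponential type on $\C^N$. The exponentials $e_w(z) := e^{w \cdot z}$ ($w \in \C^N$) are eigenvectors, with $T e_w = \Phi (w) e_w$, hence $T^n e_w = \Phi (w)^n e_w$. Because $T$ is not a scalar multiple of the identity, $\Phi$ is non-constant, and by the open mapping theorem the set $V := \{w \in \C^N : |\Phi (w)| < 1\}$ is a non-empty open subset of $\C^N$. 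A standard Hahn--Banach argument via the Borel transform (or via duality with germs of holomorphic functions at the origin of $\C^N$) shows that the linear span $E := {\rm span}\,\{e_w : w \in V\}$ is dense in $H(\C^N)$. On $E$ one has $T^n e_w = \Phi (w)^n e_w \to 0$ in $H(\C^N)$, so $(T^n)$ converges pointwise on the dense set $E$. This covers ingredient (ii) for all three parts at once, using the full sequence $(T^n)$.

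The hard part is ingredient (i), spaceability, which requires appealing to deep known results. For (a), the Godefroy--Shapiro theorem ensures that $T$ is hypercyclic, and spaceability of $HC(T)$ for every non-scalar operator commuting with translations is available in the literature (due to Petersson and Shkarin in $H(\C)$, with extensions to $H(\C^N)$ by analogous methods). For (b), when $N>1$, every non-scalar translation-commuting $T$ on $H(\C^N)$ is frequently hypercyclic by the multivariate Bonilla--Grosse-Erdmann theory, and $FHC(T)$ is spaceable by the spaceability constructions of Menet. For (c), the hypothesis that $T = \Phi(D)$ is not a finite-order differential operator means exactly that $\Phi$ is not a polynomial, and under this assumption Bonilla and Grosse-Erdmann established the frequent hypercyclicity of $T$ while Menet proved the spaceability of $FHC(T)$.

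Once both ingredients are in place, Theorem \ref{hypercyclic-mdl}(b) applies directly and yields the maximal dense-lineability of $HC(T)$ in case (a) and of $FHC(T)$ in cases (b) and (c). The main obstacle, as indicated above, is locating and correctly invoking the spaceability results in each regime; the convergence ingredient, by contrast, is handled uniformly and explicitly through the exponential eigenvectors associated to the symbol $\Phi$.
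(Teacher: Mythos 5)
Your proposal is correct in outline and follows the same skeleton as the paper's proof: both reduce the proposition to the two hypotheses of Theorem \ref{hypercyclic-mdl} (spaceability of $HC(T)$, resp.\ $FHC(T)$, plus pointwise convergence of $(T^n)$ on a dense set), and both outsource spaceability to the literature. Where you genuinely differ is in the convergence ingredient. The paper obtains it indirectly: by \cite{BoG0} and \cite{BoG}, every non-scalar convolution operator satisfies the Frequent Hypercyclicity Criterion, one clause of which yields a dense set $D$ on which $\sum_{n\ge 1} T^n x$ converges unconditionally, hence $T^n x \to 0$ on $D$. You obtain it directly from the symbol: writing $T=\Phi(D_1,\dots,D_N)$ (the convolution representation, cf.\ \cite{GoS}), using $T^n e_w=\Phi(w)^n e_w$ and the density of ${\rm span}\,\{e_w:\,|\Phi(w)|<1\}$. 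Your route is more elementary and self-contained (it is exactly the Godefroy--Shapiro density argument and avoids the FHC Criterion), while the paper's route is shorter given that the Bonilla--Grosse-Erdmann machinery is being cited anyway for spaceability.

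Two repairs are needed. First, the non-emptiness of $V=\{w:\,|\Phi(w)|<1\}$ does not follow from the open mapping theorem alone (an open image could in principle avoid the unit disc); argue instead that if $|\Phi|\ge 1$ everywhere then $1/\Phi$ is entire and bounded, hence constant by Liouville, contradicting that $\Phi$ is non-constant. Second, your spaceability attributions are imprecise at exactly the delicate point. For $N>1$, and for $N=1$ with $T$ not of the form $P(D)$, the spaceability of $FHC(T)$ (hence of $HC(T)\supset FHC(T)$) is Corollary 2 of Bonilla--Grosse-Erdmann \cite{BoG}, not a result of Menet; and the one case that a citation of ``Petersson and Shkarin'' does not settle is precisely $N=1$, $T=P(D)$ with $P$ a nonconstant polynomial (Shkarin treated only $D$ itself), for which the spaceability of $HC(P(D))$ is Menet's theorem \cite{Men1} --- this is the reference the paper relies on for part (a) in that case. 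With these fixes your argument goes through and yields the proposition.
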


\begin{proof}
In Corollary 2 of \cite{BoG} it is shown that if $N > 1$ (if $N=1$, resp.) then any non-scalar convolution operator
(any non-scalar convolution operator that is not $P(D)$ for any polynomial $P$, resp.) $T \in L(H(\C^N))$ satisfies
that $FHC(T)$ is spaceable. Since an operator is of convolution if and only if it commutes with translations
(see e.g.~\cite{GoS}) and since $HC(T) \supset FHC(T)$, we get spaceability for all sets in (a), (b), (c). Indeed,
the only case to consider in order to complete this claim is the spaceability of
$HC(P(D))$ whenever $N=1$ and $P$ is a nonconstant polynomial.
But this has been recently proved by Menet \cite{Men1}. According to
Theorem \ref{hypercyclic-mdl}, to conclude the proof it is enough to show that, for any operator $T$ as
in the statement of the theorem, $(T^n)$ converges pointwise on some dense subset of $X := H(\C^N)$.
Bonilla and Grosse-Erdmann (see \cite{BoG0} and \cite{BoG}) have proved that such a $T$
satisfies the so-called Frequent Hypercyclicity Criterion,
one of whose items is the existence of a dense subset $D \subset X$ such that $\sum_{n \ge 1} T^n x$
converges unconditionally for every $x \in D$. Clearly, this implies $T^n x \to 0$ for all $x \in D$, and we are done.
\end{proof}

A second example is provided by composition operators. Suppose that $G \subset \C$ is a domain and that
$\varphi :G \to G$ is a holomorphic self-mapping. Then $\varphi$ generates the composition operator
$C_\varphi : f \in H(G) \mapsto f \circ \varphi \in H(G)$. B\`es \cite[Theorem 1]{Bes2} has proved that if $\varphi$
is one-to-one and has no fixed point in $G$ then, for every nonconstant polynomial $P$,
the set \,$FHC(P(C_\varphi ))$ is spaceable. Since in \cite{Bes2} it is shown that every such $P(C_\varphi )$
satisfies the Frequent Hypercyclic Criterion, we get $P(C_\varphi )^n \to 0$ on a dense set and, by Theorem \ref{hypercyclic-mdl},
the set \,$FHC(P(C_\varphi ))$ is maximal dense-lineable. Finally, similar arguments allow us to assert
(under appropriate conditions) maximal
dense-lineability for $HC(B_w)$, where $B_w:(x_n) \in X \mapsto (w_nx_{n+1}) \in X$ is the backward shift
with weight sequence $(w_n)$ acting on a K\"othe sequence space $X = \lambda^p(A)$ or $c_0(A)$ $(1 \le p < \infty )$,
where $A = (a_{j,k})_{j,k \ge 1}$ is a matrix such that $a_{j,k} > 0$ and $a_{j,k} \le a_{j+1,k}$ for any $j,k \ge 1$
(see \cite{Men1} for conditions guaranteeing spaceability of $HC(B_w)$ in these spaces).

\subsection{Functions of bounded variation.}

In Section 3 we considered the space $CBV[0,1]$ of functions $f:[0,1] \to \R$ which are continuous and of bounded variation,
endowed with the norm $\|f\| := |f(0)|+{\rm Var}\,(f)$. Recall the decomposition $CBV[0,1] = AC[0,1] \oplus S[0,1]$.
Observe that the latter norm is strictly finer that the maximum norm. In fact, the Banach space $(CBV[0,1],\| \cdot \|)$
is nonseparable (see e.g.~\cite{Ada}; see also a nice proof in \cite[Section 1]{BBF}). Recall that a function $f \in CBV[0,1]$ is said
to be {\it strongly singular} whenever $f \in S[0,1]$ (that is, $f'=0$ a.e.~on $[0,1]$) and $f$ is nonconstant on any subinterval of $[0,1]$.
In particular, every strongly singular function is not absolutely continuous in any subinterval of $[0,1]$. The set of these
functions will be denoted by $SS[0,1]$.

\vskip .15cm

Recently, Jim\'enez-Rodr\'{\i}guez \cite{Jim} has shown that $c_0$ is isometrically isomorphic to a
subspace of continuous functions $[0,1] \to \R$ all of whose nonzero members are non-Lipschitz and
have a.e.~null derivative, so improving a result due to Jim\'enez {\it et al.}~\cite{JMS-a} asserting
the $\mathfrak c$-lineability of this family of functions. In particular, this family is spaceable in $C[0,1]$.
Notice that eve\-ry member of $SS[0,1]$ belongs to the described family. In \cite{BBF}, Balcerzak
{\it el al.} have demonstrated the spaceability of $SS[0,1]$ in $CBV[0,1]$ (in fact, a nonseparable closed
subspace is found in $SS[0,1] \cup \{0\}$). This improves the result of spaceability of $CBV[0,1] \setminus AC[0,1]$,
see Section 3. In \cite[Theorem 10]{BBF} it is proved an assertion containing the maximal dense-lineability of
$SS[0,1]$ in $C[0,1]$. In particular, the family $\cal A$ of functions $f \in CBV[0,1]$ being not absolutely continuous
in any subinterval of $[0,1]$ is maximal dense-lineable in $C[0,1]$. This result can also be deduced from the
mentioned
spaceability of $SS[0,1]$ (which implies the spaceability of $\cal A$, and so the $\mathfrak c$-lineability of $\cal A$)
together with the density of the set $B$ of polynomials in $C[0,1]$: just observe that ${\cal A} + B \subset {\cal A}$ and
apply Theorem \ref{maximal dense-lineable}.

\begin{remark}
{\rm In \cite[Proposition 3.3]{Ber} it is asserted the dense-lineability {\it in} $CBV[0,1]$ of
the family of functions $f \in CBV[0,1]$
which are differentiable on no interval in $[0,1]$, while in \cite[Theorem 4.2]{BeO} it is established the (stronger)
assertion of maximal dense-lineability of $\cal A$ {\it in} $CBV[0,1]$. Unfortunately, both proofs were based on the density of the set of
polynomials in $CBV[0,1]$, which is {\it false.} Consequently, the mentioned assertions are not proved (nor disproved, as
far as we know) up to date. We apologize for this.}
\end{remark}

\subsection{Riemann-integrable functions on unbounded intervals.}

Let $I \subset \R$ be an unbounded interval. Consider the Lebesgue space $L^1(I)$,
the Banach space $B(I)$ of all bounded functions $I \to \R$ (endowed with the supremum norm), and the
vector space $R(I)$ of all Riemann-integrable functions on $I$. On the one hand,
Garc\'ia-Pacheco, Mart\'in and Seoane \cite{GMS} es\-ta\-bli\-shed in 2009 the spaceability in $B(I)$ of the set of
all continuous bounded functions on $I$ which are not Riemann-integrable, as well as
the spaceability in \,$L^1(I)$ of \,$L^1(I) \setminus R(I)$ (see also \cite{GGMS}, \cite{GKP1} and
\cite[Section 2.4]{BPS}; the last result has been recently improved in \cite{GKP2}, as
mentioned in Theorem \ref{GlabKaufmannPellegrini} above: take $\Omega = I$, $p=1$). On the other hand, the existence of
Riemann-integrable functions on a given unbounded interval being not Lebesgue-integrable on it is well known:
consider the classical example $f(x) = \dis{\sin x \over x}$ on $(0,+\infty )$. In fact, in \cite{GMS} it is proved
the lineability of $R(I) \setminus L^1(I)$. In this context, an arising natural question is whether
this lineability can be enriched within some appropriate topological structure.
Theorem \ref{Riemann} below provides a po\-si\-ti\-ve answer, but we need a preliminary lemma. If $I \subset \R$
is an unbounded interval, we denote by \,$C_0(I)$ \,the space of all continuous
functions \,$f:I \to \R$ \,such that \,$\lim_{x \to \infty} f(x) = 0$. For the sake of
simplicity, we will only consider the case $I = [0,+\infty)$, the remaining ones being analogue.

\begin{lemma} \label{Riemann-lemma}
Let \,$I = [0,+\infty )$. Then the expression
$$\|f\| := \sup_{x \ge 0} |f(x)|  + \sup_{x \ge 0} \left| \int_0^x f(t) \, dt \right|$$
defines a norm on the space \,$C_0(I) \cap R(I)$ \,which makes it a separable Banach space.
The set \,$B$ of continuous functions \,$[0,+\infty ) \to \R$ with bounded support is a dense
vector subspace of this space.
\end{lemma}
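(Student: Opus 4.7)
The plan is to verify the four assertions in order: that $\|\cdot\|$ is a well-defined norm, that $(C_0(I) \cap R(I), \|\cdot\|)$ is complete, that $B$ is a dense subspace, and that separability follows once density of $B$ is known. First, for any $f \in C_0(I) \cap R(I)$, the continuity of $F(x):=\int_0^x f(t)\,dt$ together with the existence of the improper integral $F(\infty) := \lim_{x\to\infty}\int_0^x f$ (which is what $f \in R(I)$ means) guarantees that $\sup_{x\ge 0}|F(x)|$ is finite; since $f \in C_0$ is bounded, $\sup_{x\ge 0}|f(x)|$ is also finite. The norm axioms (subadditivity, positive homogeneity, and $\|f\|=0 \Rightarrow f=0$) then reduce to the corresponding properties of suprema, the last coming from the $\sup_{x\ge 0}|f(x)|$ term.

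For completeness, I would take a $\|\cdot\|$-Cauchy sequence $(f_n)$. Then $(f_n)$ is Cauchy in the uniform norm, so it converges uniformly on $I$ to some continuous $f$; since uniform limits preserve the property of vanishing at infinity, $f \in C_0(I)$. Similarly the antiderivatives $F_n(x)=\int_0^x f_n(t)\,dt$ converge uniformly on $I$ to some continuous $G$. Uniform convergence of $f_n \to f$ on each $[0,b]$ gives $G(x) = \int_0^x f(t)\,dt$, while the existence of $\lim_{x\to\infty} F_n(x)$ for each $n$ combined with uniform convergence forces $\lim_{x\to\infty} G(x)$ to exist, so $f \in R(I)$. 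Then $\|f_n - f\| \to 0$ by construction.

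The density of $B$ is the key technical step. Pick a continuous cutoff $\phi_N$ with $\phi_N = 1$ on $[0,N]$, $\phi_N = 0$ on $[N+1,\infty)$ and $\phi_N$ affine in between, and set $g_N := f\cdot\phi_N \in B$. For the $\sup|f-g_N|$ part, note that $f-g_N$ is supported in $[N,\infty)$ and bounded there by $\sup_{t\ge N}|f(t)|$, which tends to $0$ since $f\in C_0(I)$. For the integral part, split at $N$ and $N+1$: for $x \le N$ the integrand vanishes; for $N \le x \le N+1$ the integral is bounded by $\sup_{t\ge N}|f(t)|$; and for $x \ge N+1$ one writes
\[
\int_0^x (f-g_N) = \int_N^{N+1} f(1-\phi_N) + \bigl(F(x)-F(N+1)\bigr),
\]
where the first term tends to $0$ and the second is controlled by $|F(x)-F(\infty)|+|F(\infty)-F(N+1)|$, both small for $N$ large because $F$ has a finite limit at infinity. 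The uniformity in $x$ is the main delicate point, but the decomposition above takes care of it.

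For separability, set $B_N := \{f \in B : f \equiv 0 \text{ on } [N,\infty)\}$, so $B = \bigcup_{N\in\N} B_N$. On $B_N$ the estimate $|F(x)| \le N\|f\|_\infty$ yields $\|f\| \le (1+N)\|f\|_\infty$, and trivially $\|f\|_\infty \le \|f\|$, so $\|\cdot\|$ and $\|\cdot\|_\infty$ are equivalent on $B_N$. Since $B_N$ embeds isometrically (in the sup norm) into $C[0,N]$ via restriction, which is separable, each $B_N$ admits a countable $\|\cdot\|$-dense subset $D_N$; then $\bigcup_N D_N$ is a countable $\|\cdot\|$-dense subset of $B$, and density of $B$ transfers separability to the whole space. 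The main obstacle is the uniform control in the density proof; the remaining parts are essentially bookkeeping.
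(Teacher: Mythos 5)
Your proposal is correct, and its skeleton (norm axioms, completeness via a Cauchy sequence, density of truncations, separability through a countable dense subset of $B$) matches the paper's. Two steps are argued differently, though. For completeness, the paper fixes $\ve>0$, passes to the limit in the estimate $|\int_0^x(f_m-f_n)|<\ve/3$, and then invokes Cauchy's criterion for improper Riemann integrals (transferring the tail estimate from some $f_N$ to $f$) to get $f\in R(I)$; you instead observe that the antiderivatives $F_n(x)=\int_0^x f_n$ are uniformly Cauchy on $I$, hence converge uniformly to $G(x)=\int_0^x f$, and conclude that $G$ has a limit at infinity by the standard interchange-of-limits (Moore--Osgood) argument -- an equivalent but slightly more structural formulation of the same fact. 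For density your multiplicative cutoff $f\phi_N$ plays the role of the paper's truncation $f_a$ (which is instead affine-linear on $[a,a+1]$ down to $0$); the resulting estimates, including the uniform control of $\sup_{x\ge N+1}|F(x)-F(N+1)|$ via the existence of $F(\infty)$, are the same in substance. For separability the paper uses Weierstrass approximation to show that truncated polynomials with rational coefficients are dense, whereas you argue abstractly that on $B_N=\{f\in B:\ f\equiv 0 \hbox{ on } [N,\infty)\}$ the norm $\|\cdot\|$ is equivalent to the sup norm and $B_N$ embeds isometrically into the separable space $C[0,N]$, so a countable dense set in each $B_N$ suffices; this avoids Weierstrass at the cost of being non-explicit, and both arguments are valid.
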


\begin{proof}
The linearity of the integral together with the fact that $\sup_{x \ge 0} |f(x)|$ is a norm on $C_0(I)$
yields that $\| \cdot \|$ is a norm on $X := C_0(I) \cap R(I)$. Let us prove that $(X, \| \cdot \|)$ is
complete. If $(f_n)$ is a $\| \cdot \|$-Cauchy sequence in $X$ then it is, trivially, a Cauchy sequence
in the Banach space $C_0(I)$ endowed with the supremum norm. Hence there is $f \in C_0(I)$ such that
$f_n \to f$ uniformly on $I$. We need to show that $f \in R(I)$ and $f_n \to f$ for $\| \cdot \|$.
To this end, fix $\ve > 0$. There is $N \in \N$ such that $\|f_m-f_n\| < \ve /3$ for all $m \ge n \ge N$.
Then \,$|\int_0^x (f_m(t)-f_n(t)) \,dt| < \ve /3$ \,for all $x > 0$ and all $m \ge n \ge N$. In particular,
setting $n = N$ and letting $m \to \infty$ one gets by invoking uniform convergence on $[0,x]$ that
\,$|\int_0^x (f(t)-f_N(t)) \,dt| \le \ve /3$ \,for all $x > 0$. Since $f_N \in R(I)$ there is $a > 0$ such that
\,$|\int_b^c f_N(t) \,dt| < \ve /3$ \,for all $c > b > a$. It follows from the triangle inequality that
\begin{equation*}
\begin{split}
\left|\int_b^c f(t) \,dt\right| \le &\left|\int_b^c f_N(t) \,dt\right| + \left|\int_0^b (f(t)-f_N(t)) \,dt\right| + \\
                         &\left|\int_0^c (f(t)-f_N(t)) \,dt\right| < {\ve \over 3} + {\ve \over 3} + {\ve \over 3} = \ve ,
\end{split}
\end{equation*}
and Cauchy's criterion for improper Riemann integrals guarantees that $f \in R(I)$.
Now, we have $|\int_0^x (f(t)-f_n(t)) \,dt | \le \ve /3$ for all $n \ge N$ and all $x > 0$, and $N$ can be chosen
so that $\sup_{x>0} |f(x) - f_n(x)| < \ve /2$ for all $n \ge N$. Therefore $\|f_n -f\| < \ve$ if $n \ge N$,
which shows that $f_n \mathop{\longrightarrow}\limits_{n}^{\| \cdot \|} f$.
Hence \,$(X,\| \cdot \|)$ \,is complete.

\vskip .15cm

Next, consider the set $B$ defined in the statement of this lemma.
It is clear that $B$ is a vector subspace of $X$. Fix a function $f \in X$
as well as an $\ve > 0$. Then there is $a > 0$ with $|f(x)| < \ve /6$ $(x > a)$ and
$|\int_b^c f(t) \, dt| < \ve /6$ for all $c > b > a$. Define $f_a:[0,+\infty ) \to \R$ as $f_a = f$ on $[0,a]$,
$f_a = 0$ on $(a+1,+\infty )$, and $f_a$ affine-linear on $[a,a+1]$ with $f_a(a+1)=0$.
It follows that $f_a \in B$ and $\|f-f_a\| \le \sup_{a < x <a+1} |f(x)-f_a(x)| + \sup_{x \ge a+1} |f(x)| + |\int_a^{a+1} f(t) \,dt| +
|\int_a^{a+1} f_a(t) \,dt | + \sup_{x>a+1} |\int_{a+1}^x f(t) \,dt| \le 5 \cdot {\ve \over 6} < \ve$.
Thus $B$ is dense in $X$.
Finally, by using the Weierstrass polynomial approximation theorem it is not difficult to realize that
the countable set $\{f_a: \, a \in \N$ and $f$ is a polynomial with rational coefficients$\}$
is dense in $X$, so yielding the separability of $X$.
\end{proof}

\begin{theorem} \label{Riemann}
Let \,$I = [0,+\infty )$. Then the set
$$C_0(I) \cap R(I) \setminus \bigcup_{0<p<\infty} L^p(I)$$
is spaceable and maximal dense-lineable in \,$(C_0(I) \cap R(I),\| \cdot \|)$.
\end{theorem}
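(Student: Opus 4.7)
The plan is to derive spaceability from Theorem \ref{Kitson-Timoney} and then deduce maximal dense-lineability from Theorem \ref{maximal dense-lineable}(b), working throughout in the separable Banach space $X := (C_0(I) \cap R(I), \|\cdot\|)$ furnished by Lemma \ref{Riemann-lemma}.

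For the spaceability step, for each $n \in \N$ I would set $Z_n := L^n(I) \cap X$, endowed with the norm $\|f\|_{Z_n} := \|f\|_X + \|f\|_n$. The space $(Z_n, \|\cdot\|_{Z_n})$ is Banach: any Cauchy sequence converges in $X$ (uniformly, in particular) and in $L^n$, and extracting a common subsequence forces the two limits to agree pointwise almost everywhere, hence everywhere by continuity of the $X$-limit. Let $T_n : Z_n \hookrightarrow X$ denote the continuous inclusion. Since every $f \in C_0(I)$ is bounded, one has $L^p(I) \cap C_0(I) \subset L^q(I) \cap C_0(I)$ whenever $0 < p \le q$, so $Y := \mathrm{span}\,\bigcup_n T_n(Z_n) = \bigcup_{0 < p < \infty} L^p(I) \cap X$ is already a vector subspace, and $X \setminus Y$ is precisely the target set. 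It remains to show that $Y$ is not closed in $X$: by Lemma \ref{Riemann-lemma}, $Y$ is dense in $X$ because it contains the subspace of continuous functions with bounded support, while $Y \ne X$ is witnessed by $f_0(x) := \sin(x)/\log(x)$ on $[2,+\infty)$, extended continuously to $0$ at $x = 0$. Indeed, $f_0 \in C_0(I)$ since $1/\log x \to 0$; Dirichlet's test gives convergence of $\int_0^{\infty} f_0$, so $f_0 \in R(I)$; and the period-by-period lower bound $\int_{2k\pi}^{2k\pi + 2\pi} |\sin x / \log x|^p\, dx \ge c_p / (\log(2k\pi + 2\pi))^p$, combined with $\sum_k (\log k)^{-p} = +\infty$, forces $f_0 \notin L^p(I)$ for every $p \in (0, \infty)$. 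Theorem \ref{Kitson-Timoney} then furnishes a closed infinite-dimensional subspace $M \subset (X \setminus Y) \cup \{0\}$.

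For the maximal dense-lineability step, I would apply Theorem \ref{maximal dense-lineable}(b) to $A := X \setminus Y$ and $B :=$ the space of continuous functions on $I$ with bounded support. Since $X$ is an infinite-dimensional separable Banach space, $\dim X = \mathfrak{c}$; the space $M$ produced above is itself a separable Banach space, hence of algebraic dimension $\mathfrak{c}$, showing that $A$ is $\mathfrak{c}$-lineable. By Lemma \ref{Riemann-lemma}, $B$ is a dense vector subspace of $X$, hence dense-lineable. The inclusion $B \subset L^1(I) \cap X \subset Y$ yields $A \cap B = \emptyset$, and if $f \in A$ and $g \in B$ then $f + g \in Y$ would give $f = (f+g) - g \in Y$ (as $Y$ is a vector subspace containing $B$), a contradiction; hence $A + B \subset A$. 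Theorem \ref{maximal dense-lineable}(b) with $\alpha = \mathfrak{c} = \dim X$ then delivers a dense vector subspace of $X$ of dimension $\mathfrak{c}$ meeting $Y$ only at $0$.

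I expect the main obstacle to be the verification that $Y$ is not closed, which reduces to producing an explicit element of $X$ lying outside every $L^p(I)$; the candidate $f_0(x) = \sin(x)/\log(x)$ handles this via Dirichlet's test (for Riemann integrability) and a period-by-period lower bound (for failure of $L^p$-integrability). The remaining items---the Banach structure of each $Z_n$, the identification $Y = \bigcup_{p \in (0, \infty)} L^p(I) \cap X$, and the disjointness and absorption properties of $A$ and $B$---follow routinely from $L^p \cap C_0 \subset L^q \cap C_0$ for $p \le q$ and the density assertion in Lemma \ref{Riemann-lemma}. It is also worth noting that Theorem \ref{BerOrd} seems not to apply directly here: the norm $\|\cdot\|$ fails the uniform lower bound $\|f+g\| \ge C\|f\|$ on pairs with disjoint supports (interlaced rectangles of alternating sign produce large cancellations in the integral term), which is why the Kitson--Timoney route is preferable.
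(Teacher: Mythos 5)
Your proposal is correct and follows essentially the same route as the paper: Kitson--Timoney with $Z_n = C_0(I)\cap R(I)\cap L^n(I)$ and the inclusions for spaceability, then Theorem \ref{maximal dense-lineable} with $B$ the continuous functions of bounded support for maximal dense-lineability. The only difference is the witness for $Y \ne X$ --- you use $\sin(x)/\log(x)$ with Dirichlet's test and a period-by-period lower bound, while the paper uses a piecewise-affine function with spikes of height $\pm 1/\log(1+n)$ --- and both work.
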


\begin{proof}
Set $X = (C_0(I) \cap R(I),\| \cdot \|)$ and $A = C_0(I) \cap R(I) \setminus \bigcup_{0<p<\infty} L^p(I)$,
and consider the set $B$ in Lemma \ref{Riemann-lemma}. Then $X$ is metrizable, separable and, plainly,
$A \cap B = \emptyset$ and $A$ is stronger than $B$. By Lemma \ref{Riemann-lemma}, $B$ is dense-lineable.
If we proved the spaceability of $A$ then we would obtain that $A$ is maximal lineable (because $X$ is separable),
so it would follow from Theorem \ref{maximal dense-lineable} that $A$ is maximal dense-lineable. Therefore
it suffices to demonstrate that $A$ is spaceable.

\vskip .15cm

To this end, we will try to apply Theorem \ref{Kitson-Timoney}. Set $Y = C_0(I) \cap R(I) \cap \bigcup_{0<p<\infty} L^p(I)$,
so that $A = X \setminus Y$. Since $C_0(I) \cap L^p(I) \subset C_0(I) \cap L^q(I)$
whenever $q \ge p$, we get
\begin{equation*}
\begin{split}
Y &= C_0(I) \cap R(I) \cap \bigcup_{n=1}^\infty L^n(I) \\
  &= {\rm span} \big(C_0(I) \cap R(I) \cap \bigcup_{n=1}^\infty L^n(I) \big)
= {\rm span} \big( \bigcup_{n=1}^\infty T_n(Z_n) \big) ,
\end{split}
\end{equation*}
where $Z_n = C_0(I) \cap R(I) \cap L^n(I)$ and $T_n$ denotes the inclusion $Z_n \hookrightarrow X$.
It is plain that $T_n$ is (linear and) continuous if each $Z_n$ is endowed with the norm
$\|f\| = \sup_{x \ge 0} |f(x)|  + \sup_{x \ge 0} \big| \int_0^x f(t) \, dt \big| + \|f\|_n$.
Moreover, an approach similar to that given in the proof of the preceding lemma shows that
each $Z_n$ is a Banach space under the latter norm. Finally, $Y$ is not closed in $X$.
Indeed, $Y$ contains the set $B$ of Lemma \ref{Riemann-lemma}, so $Y$ is dense in $X$.
But $Y \ne X$, because the function $\varphi :I \to \R$ defined as
$$
\varphi (x) = \left\{
\begin{array}{cl}
0 & \mbox{if } x \in \N_0\\
{1 \over \log (1+n)} & \mbox{if } x = 2n - {1 \over 2} \,\, (n \ge 1)\\
-{1 \over \log (1+n)} & \mbox{if } x = 2n + {1 \over 2} \,\, (n \ge 1)\\
\hbox{affine-linear} & \mbox{otherwise},
\end{array}
\right.
$$
is in $X$ but not in $Y$: each series $\sum_{n \ge 1} 1/\log^p (1+n)$ $(p > 0)$ diverges, $\varphi (x) \to 0$ and
$|\int_0^x \varphi | \le 1/\log (1+[x/2]) \to 0$ as $x \to +\infty$ ($[x]$ denotes the integer part of $x$).
Consequently, Theorem \ref{Kitson-Timoney} applies and $A = X \setminus Y$ is spaceable, as required.
\end{proof}

\subsection{The ``failure'' of the Lebesgue dominated convergence theorem.}

In this subsection we keep inside the setting of integrable functions, but focussing on {\it sequences}
of these functions. Results about interchanging of limits and integrals are well known, the most famous of them
being probably the Lebesgue dominated convergence theorem: if $(\Omega ,{\cal M},\mu )$ is a measure space,
$f_k:\Omega \to \R$ $(k \ge 1)$ are (Lebesgue) integrable functions, $f_k \to f$ a.e.~and $\sup_k |f_k|$ is
integrable, then ($f$ is integrable and) $\|f_k - f\|_1 \to 0$ (hence $\lim_{k \to \infty} \int_\Omega f_k \,d\mu =
\int_\Omega f \,d\mu$). Relaxing some of the hypotheses may drive to the failure of the conclusion. For instance,
for the Lebesgue measure on $\R$, we have that $f_k(x):= {k \over k^2+x^2} \to 0 =:f(x)$ for all $x \ne 0$, each $f_k$
is integrable with $\|f_k\|_1 = \pi$ for all $k$ (so $\sup_k \|f_k\|_1 = \pi < +\infty$) but $f_k \not\to f$ in $\| \cdot \|_1$.
By topologizing appropriately an adequate vector space, it will be shown that this
phenomenon is lineable in a strong sense, see Theorem \ref{dominated-convergence} below.
As in the example, our measure will be the Lebesgue measure on $\R$.

\vskip .15cm

For this, we consider the vector space $(\R^\R)^\N$ of sequences $(f_k)_{k \ge 1}$ of functions $\R \to \R$, as well
as the subspace of it given by
\begin{equation*}
\begin{split}
CBL_s := \{(f_k) \in (\R^\R)^\N: \,\, &\hbox{each } f_k \hbox{ is continuous, bounded and integrable,} \\
                                      &\|f_k\|_\infty \mathop{\longrightarrow}\limits_{k \to \infty} 0 \,
                                      \,\, \hbox{ and } \,\sup_k \|f_k\|_1 < +\infty\} .
\end{split}
\end{equation*}
It is a standard exercise to prove that $CBL_s$ becomes a Banach space when endowed with the norm
$\| (f_k) \| = \sup_k \|f_k\|_\infty + \sup_k \|f_k\|_1$. This space is, however, {\it not} \,separable, see Remark \ref{RemarkCBL}.3 below.
As usual, we have denoted $\|f\|_\infty = \sup_{x \in \R} |f(x)|$ for each $f:\R \to \R$.
In particular, $(f_k) \in CBL_s$ implies $f_k \to 0$ uniformly on $\R$. Next, consider the subset $\cal F$ of $CBL_s$
of sequences for which the dominated convergence theorem ``fails'', that is, the family
$$
{\cal F} := \{(f_k) \in CBL_s: \, \|f_k\|_1 \not\longrightarrow 0 \, \hbox{ as } \, k \to \infty\}.
$$
\begin{theorem}\label{dominated-convergence}
The set \,${\cal F}$ is spaceable  in \,$CBL_s$.
\end{theorem}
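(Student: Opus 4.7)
The plan is to realise $\mathcal{F}$ as the complement of the (closed) vector subspace
\[
Y := \{(f_k) \in CBL_s : \|f_k\|_1 \to 0\}
\]
inside the Banach space $CBL_s$, and apply Theorem \ref{BerOrd}. I would take $\Omega = \N$, let $Z$ be the Banach space $C_b(\R)$ of continuous bounded real functions with the supremum norm, let $X = CBL_s$ viewed as a space of $Z$-valued functions on $\N$ via $f \leftrightarrow (k \mapsto f_k)$, put $S = Y$, and use the trivial set-function $\mathcal{S}(A) = A$. Under this identification, $\sigma((f_k)) = \{k \in \N : f_k \not\equiv 0\}$, and a routine argument shows $CBL_s$ is a Banach, hence an F-, space.

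Conditions (i)--(iv) should all be easy. For (i), norm convergence in $CBL_s$ directly forces $\|f^{(n)}_k - f_k\|_\infty \to 0$ for every fixed $k$, which is convergence in $Z$, so no subsequence is even needed. For (ii), if $f, g \in X$ have disjoint $\N$-supports, then at each index $k$ at most one of $f_k, g_k$ is non-zero, giving $\|f_k + g_k\|_\infty \geq \|f_k\|_\infty$ and $\|f_k + g_k\|_1 \geq \|f_k\|_1$ for every $k$, hence $\|f + g\| \geq \|f\|$, so $C = 1$ works. Condition (iii) is obvious since $Y$ is a linear subspace, and (iv) is immediate because the same coordinatewise inequality $\|f_k\|_1 \leq \|f_k + g_k\|_1$ shows that $f + g \in Y$ forces $f \in Y$ whenever the supports are disjoint.

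The only step requiring real attention is (v): producing pairwise $\sigma$-disjoint witnesses inside $\mathcal{F}$. I would partition $\N$ into infinitely many infinite subsets $N_1, N_2, \ldots$, enumerate $N_n = \{k^{(n)}_1 < k^{(n)}_2 < \cdots\}$, fix a continuous, compactly supported tent function $\phi : \R \to \R$ with $\|\phi\|_\infty = \|\phi\|_1 = 1$, and define
\[
f^{(n)}_{k^{(n)}_j}(x) = \tfrac{1}{j}\,\phi(x/j) \quad (j \geq 1), \qquad f^{(n)}_k = 0 \text{ if } k \notin N_n.
\]
The dilation gives $\|f^{(n)}_{k^{(n)}_j}\|_\infty = 1/j \to 0$ while $\|f^{(n)}_{k^{(n)}_j}\|_1 = 1$, so $f^{(n)} \in CBL_s$ but its $L^1$-norms do not tend to $0$, placing $f^{(n)} \in \mathcal{F}$; moreover $\sigma(f^{(n)}) = N_n$, and these are pairwise disjoint. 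Theorem \ref{BerOrd} then yields the spaceability of $\mathcal{F}$. I do not anticipate any genuine obstacle; the only delicate point is the simultaneous engineering of the sup-norm decay and the $L^1$-norm non-decay in the witnesses of (v), which is exactly what the scaling $\tfrac{1}{j}\phi(\cdot/j)$ is designed to deliver.
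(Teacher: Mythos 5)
Your proof is correct, but it follows a genuinely different route from the paper's. The paper proves this theorem with Wilansky's criterion quoted at the start of Section 3: it notes that $Y := \{(f_k) \in CBL_s : \, \|f_k\|_1 \to 0\}$ is a \emph{closed} subspace of the Banach space $CBL_s$ and then exhibits, much as you do in your step (v), a family of sequences in ${\cal F}$ whose coordinate functions have pairwise disjoint supports (tents of height $1/k$, or alternatively $f_{n,k}(x) = k^n/(k^{2n}+x^2)$), which forces $Y$ to have infinite codimension; spaceability of ${\cal F} = CBL_s \setminus Y$ then follows. You instead invoke Theorem \ref{BerOrd} with $\Omega = \N$, $Z = C_b(\R )$, ${\cal S}(A) = A$ and $S = Y$, and your verifications check out: norm convergence in $CBL_s$ gives coordinatewise convergence in $Z$ (so (i) holds without passing to a subsequence), disjointness of the index supports gives the inequalities needed for (ii) and (iv) with $C=1$, (iii) is just linearity of $Y$, and the dilated tents $\tfrac1j \phi (\cdot /j)$ placed on disjoint index sets $N_n$ do lie in $CBL_s$ (sup norms $1/j \to 0$, $L^1$ norms identically $1$) and in ${\cal F}$, with pairwise disjoint supports, so (v) holds. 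As for what each approach buys: the Wilansky route is shorter once one proves that $Y$ is closed (a standard but genuine argument), whereas your route bypasses closedness entirely --- note the closedness you mention in passing is never used by Theorem \ref{BerOrd} --- replacing it with the support axioms (i)--(iv), which here are immediate because coordinates living on disjoint index sets do not interact in the norm $\sup_k \| \cdot \|_\infty + \sup_k \| \cdot \|_1$; your argument would also transfer verbatim to an F-space setting where Wilansky's Banach-space statement does not directly apply. The combinatorial heart of both proofs is the same: disjointly supported witnesses whose $L^1$-norms stay bounded away from $0$ while their sup-norms decay.
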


\begin{proof}
We apply Wilansky's criterion given at the
beginning of Section 3: take $X = CBL_s$ and
$Y = \{(f_k) \in CBL_s: \, \|f_k\|_1 \mathop{\longrightarrow}\limits_{k \to \infty} 0\}$, so that ${\cal F} = X \setminus Y$.
Note that $Y$ is a vector subspace, and a standard argument yields that $Y$ is closed.
It is enough to exhibit a linearly independent sequence $\{\Phi_n = (f_{n,k})_{k \ge 1}: \, n \ge 1\} \subset X \setminus Y$.
With this aim, select infinitely many disjoint sequences $\{p(n,1) < p(n,2) < \cdots < p(n,k) < \cdots \}$ $(n=1,2, \dots )$
of natural numbers and define $f_{n,k}:\R \to \R$ as
$$
f_{n,k} (x) = \left\{
\begin{array}{ll}
\hskip -8pt (2/k)(x-p(n,k)) & \mbox{if } p(n,j) \le x < p(n,j) + {1\over 2} \,\,\, (1 \le j \le k)\\
\hskip -8pt (2/k)(p(n,k) + 1 - x) & \mbox{if } p(n,j) + {1 \over 2} \le x < p(n,j) + 1 \,\, (1 \le j \le k)\\
\hskip -8pt \, 0 & \mbox{otherwise}
\end{array}
\right.
$$
These functions satisfy $\|f_{n,k}\|_\infty = {1 \over k}$ and $\|f_{n,k}\|_1 = 1$, and their
supports are mutually disjoint. Hence the family $\{\Phi_n\}_{n \ge 1}$ is in $\cal F$ and is linearly independent, as required.
\end{proof}

\begin{remarks} \label{RemarkCBL}
{\rm 1. An alternative way of constructing the sequence $(\Phi_n)$ in the last proof is defining $f_{n,k}(x) := {k^n \over k^{2n} + x^2}$.

\noindent 2. Since $\cal F$ is spaceable, it is $\mathfrak{c}$-lineable. Moreover, we have ${\rm dim}\,(CBL_s) \le {\rm card} \,(CBL_s)
\le {\rm card} \, (C(\R )^\N ) \le {\rm card}\, ((\R^\N)^\N) = {\rm card}\, (\R ) = \mathfrak{c}$. Hence \break
${\rm dim}\,(CBL_s) = \mathfrak{c}$ and
$\cal F$ is {\it maximal lineable.} Another way to prove this is the following.
Let $Y$ as in the proof of Theorem \ref{dominated-convergence}.
Similarly to the proof of Lemma \ref{Riemann-lemma}, define for each $f:\R \to \R$ and each $a>0$ the function
$f^a:\R \to \R$ as $f^a = f$ on $[-a,a]$,
$f^a = 0$ for $|x|>a$, and $f^a$ affine-linear on $[-a,-a-1] \cup [a,a+1]$ with $f^a(a+1)=0=f^a(-a-1)$.
Then it is a standard exercise to check that the set $\{((f_1)^N, \dots ,(f_k)^N, 0,0,0, \dots ) \in (\R^\R)^\N:
\, f_1, \dots ,f_k$ are polynomials with {\it rational} coefficients and $k,N \in \N\}$
is a countable dense subset of $Y$. That is, $Y$ is a separable closed vector subspace
of the non-separable F-space $CBL_s$. By Remark \ref{Remark-nonseparable}, $\cal F$ is maximal lineable.

\noindent 3. We have not been able to demonstrate the (maximal) dense-lineability of $\cal F$; nevertheless, our conjecture is ``yes''.
Notice that not even the mere dense-lineability can be deduced from
Theorem \ref{simple dense-lineability}, because $CBL_s$ is not separable. Let us provide a simple proof of this fact.
Consider the mapping $T:(a_k)_k \in \ell_\infty \mapsto (f_k)_k \in CBL_s$, where
$$
f_k (x) = \left\{
\begin{array}{ll}
(2a_k/k)(x-j+1) & \mbox{if } j-1 \le x < j + {1\over 2} \,\,\,\, (1 \le j \le k)\\
(2a_k/k)(j-x) & \mbox{if } j + {1\over 2} \le x < j  \,\,\,\, (1 \le j \le k)\\
\,0 & \mbox{otherwise}
\end{array}
\right.
$$
Then $(1/2) \|(a_k)_k\|_{\ell_\infty} = (1/2)\sup_{k \ge 1} |a_k| \le \sup_{k \ge 1} |a_k/k| +
(1/2)\sup_{k \ge 1} |a_k| = \|T(a_k)_k\| \le 2 \sup_{k \ge 1} |a_k| = 2 \|(a_k)_k\|_{\ell_\infty}$. Hence $T$ is an isomorphism
between the nonseparable space $\ell_\infty$ and $T(\ell_\infty )$. Therefore $T(\ell_\infty )$ (and so $CBL_s$) is not separable.}
\end{remarks}

\subsection{Entire functions of fast growth and generalized Dirichlet spaces.}

We want to do here a new incursion into the complex plane. Let us consider the space ${\cal E} = H(\C )$ of entire functions, equipped with the
compact-open topology. Let $\varphi :[0,+\infty ) \to (0,+\infty )$ be an increasing function.
A simple application of the Weierstrass interpolation theorem (see e.g.~\cite[Chap.~15]{RudA}) yields the existence of
an entire function growing faster than $\varphi$, that is, such that $\varphi$ belongs to the set
$${\cal E}_\varphi := \left\{f \in {\cal E}: \, \limsup_{r \to +\infty} {\max \{|f(z)|: \, |z|=r\} \over \varphi (r)} = +\infty \right\} .$$
In fact, the dense-lineability of ${\cal E}_\varphi$ has already
been established, even with several additional properties (boundedness on large sets, vanishing on large sets as $z \to \infty$,
universality in the sense of Birkhoff, action of certain ope\-ra\-tors, etc), see for instance \cite{Arm,BerA0,BerA00,BCL2,Bon,Cal}.
As Theorem \ref{Growth-Dirichlet} below shows, ${\cal E}_\varphi$ enjoys stronger lineability properties.

\vskip .15cm

Next, we turn our attention to the disc $\D$ and consider the so-called {\it weighted Dirichlet spaces} given by
$$
{\cal S}_{\nu} = \left\{f(z) = \sum_{n=0}^\infty a_n z^n \in H(\D ): \,
\sum_{n=0}^\infty |a_n|^2 (n+1)^{2\nu} < +\infty \right\},
$$
where $\nu \in \R$. For instance, if $\nu = 0,-1/2,1/2$, then ${\cal S}_{\nu}$ is,
respectively, the classical Hardy space $H^2(\D )$, the Bergman
space $A^2(\D )$, and the Dirichlet space ${\cal D}$.
Each ${\cal S}_\nu$ becomes a Hilbert space under the inner product
$\langle \sum_{n=0}^{\infty}a_{n}z^{n},
\sum_{n=0}^{\infty}b_{n}z^{n} \rangle =
\sum_{n=0}^{\infty}a_{n}\overline {b_{n}}(n+1)^{2 \nu}$, see \cite{CoMc}. The corresponding norm is $\|f\|_\nu = (\sum_{n=0}^\infty |a_n|^2 (n+1)^{2\nu})^{1/2}$.
Observe that $S_\al \supsetneq S_\beta$ if $\beta > \al$. Then it is natural to ask what is the algebraic size of
$S_{\nu ,\, {\rm strict}} := S_\nu \setminus \bigcup_{a > \nu} S_a$.

\begin{theorem} \label{Growth-Dirichlet}
\begin{enumerate}
\item [\rm (a)] For every increasing function \,$\varphi :[0,+\infty ) \to (0,+\infty )$, the set \,${\cal S}_\varphi$ is maximal dense-lineable
                and spaceable in \,$\cal E$.
\item [\rm (b)] For every $\nu \in \R$, the set \,$S_{\nu ,\,{\rm strict}}$ is maximal dense-lineable and spaceable in \,$S_{\nu}$.
\end{enumerate}
\end{theorem}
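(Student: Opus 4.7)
The plan is to treat both parts by the same two-step strategy: first deduce spaceability from the Kitson--Timoney Theorem~\ref{Kitson-Timoney}, and then upgrade it to maximal dense-lineability by applying Theorem~\ref{maximal dense-lineable}(c), with the space of polynomials playing the role of the auxiliary dense-lineable set $B$. In both settings the ambient space $X$ (namely ${\cal E}$ or $S_\nu$) is a separable Fr\'echet space of Hamel dimension ${\mathfrak c}$, so every infinite-dimensional closed vector subspace automatically has dimension ${\mathfrak c} = {\rm dim}\,(X)$; hence spaceability immediately yields maximal lineability, and the only remaining hypotheses of Theorem~\ref{maximal dense-lineable}(c) are that $A$ is stronger than $B$ and disjoint from it.

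For part (a), since ${\cal E}_\varphi \supset {\cal E}_{\varphi^*}$ whenever $\varphi^* \ge \varphi$, we may replace $\varphi$ by $\varphi^*(r) := \max\{\varphi(r),\,e^r\}$ and assume that $\varphi(r)/r^n \to \infty$ for every $n \in \N$; this makes $B := \{\hbox{polynomials}\}$ disjoint from $A := {\cal E}_\varphi$, and the bound $M(r,f+p) \ge M(r,f) - M(r,p)$ (with $M(r,f) := \max_{|z|=r} |f(z)|$) immediately gives $A + B \subset A$. Consider the weighted Banach space
$$Y := \{f \in H(\C ) : \, \|f\|_\varphi := \sup_{r \ge 0} M(r,f)/\varphi (r) < \infty\};$$
completeness of $(Y,\|\cdot\|_\varphi)$ follows routinely from Montel's theorem, and the inclusion $(Y,\|\cdot\|_\varphi) \hookrightarrow {\cal E}$ is continuous by the maximum principle. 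Since $Y$ contains all polynomials it is dense in ${\cal E}$, while $Y \ne {\cal E}$ because ${\cal E}_\varphi$ is nonempty by the classical Weierstrass interpolation theorem; hence $Y$ is not closed in ${\cal E}$, and Theorem~\ref{Kitson-Timoney} (applied with $Z_1 = Y$ and $T_1$ the inclusion) yields the spaceability of ${\cal E} \setminus Y = {\cal E}_\varphi$. Theorem~\ref{maximal dense-lineable}(c) then delivers the maximal dense-lineability.

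For part (b), set $A := S_{\nu ,\, {\rm strict}}$ and $B := \{\hbox{polynomials}\}$; polynomials lie in every $S_a$, whence $A \cap B = \emptyset$, and if $f \in A$, $p \in B$ then $f+p \in A$ (otherwise $f+p \in S_a$ for some $a > \nu$ would give $f = (f+p)-p \in S_a$, a contradiction), so $A + B \subset A$. For spaceability, apply Theorem~\ref{Kitson-Timoney} with the Hilbert spaces $Z_n := S_{\nu + 1/n}$ and the continuous inclusions $T_n : Z_n \hookrightarrow S_\nu$; since the family $(S_a)_{a > \nu}$ is decreasing in $a$, the linear span of $\bigcup_n T_n(Z_n)$ equals $Y := \bigcup_{a > \nu} S_a$, which is dense in $S_\nu$ (it contains all polynomials) but proper: an explicit witness is
$$g(z) \,=\, \sum_{n=1}^\infty \frac{z^n}{(n+1)^{\nu + 1/2}\, \log (n+2)},$$
since $\sum_n |a_n|^2 (n+1)^{2\nu} = \sum_n 1/((n+1)\log^2(n+2)) < \infty$ while $\sum_n |a_n|^2(n+1)^{2a}$ diverges for every $a > \nu$ by the integral test. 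Hence $Y$ is not closed, Theorem~\ref{Kitson-Timoney} yields the spaceability of $S_{\nu ,\, {\rm strict}} = S_\nu \setminus Y$, and Theorem~\ref{maximal dense-lineable}(c) completes the argument.

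The only technical points requiring any care --- both quite mild --- are the passage $\varphi \leadsto \varphi^*$ in (a), forced upon us because, if $\varphi$ is bounded, polynomials already belong to ${\cal E}_\varphi$ and spoil the disjointness $A \cap B = \emptyset$ needed by our dense-lineability criterion; and the explicit exhibition above of an element of $S_{\nu ,\, {\rm strict}}$ witnessing that $Y$ is not closed in (b). Everything else is a direct combination of Theorems~\ref{maximal dense-lineable} and \ref{Kitson-Timoney}.
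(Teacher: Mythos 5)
Your proof is correct and follows essentially the same route as the paper: Theorem~\ref{Kitson-Timoney} applied to a weighted Banach space of entire functions (resp.\ to the inclusions $S_{\nu+1/n}\hookrightarrow S_\nu$) gives spaceability, and Theorem~\ref{maximal dense-lineable} with $B$ the polynomials upgrades this to maximal dense-lineability. The only cosmetic differences are that the paper replaces $\varphi$ by $e^{2r}+\varphi(r)$ rather than $\max\{\varphi(r),e^r\}$, and your explicit witness in (b) (starting the sum at $n=1$ with $\log(n+2)$) in fact avoids a small glitch in the paper's formula, whose $k=0$ term divides by $\log 1=0$.
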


\begin{proof}
(a) The Fr\'echet space $\cal E$ is metrizable and separable with ${\rm dim} \, ({\cal E}) = \mathfrak{c}$.
Denote $M(f,r) := \max \{|f(z)|: \, |z|=r\} = \max \{|f(z)|: \, |z| \le r\}$ for $f \in {\cal E}$, $r > 0$.
For each $\varphi$ as in the hypothesis, consider the auxiliary function $\psi (r) := e^{2r} + \varphi (r)$.
Since, obviously, ${\cal E}_\psi \subset {\cal E}_\varphi$, it is enough to prove the required lineability properties for ${\cal E}_\psi$.
Note that \,${\cal S}_\psi = {\cal E} \setminus Y$, where
$$
Y := \{f \in {\cal E}: \, \|f\| < \ve\} \quad \hbox{and} \quad \|f\| := \sup_{r > 0} M(f,r)/\psi (r).
$$
With the help of the inequality $M(f,N) \le \psi (N) \|f\|$
$(N=1,2,...)$ it is easy to see that $(Y,\| \cdot \|)$ is a Banach space such that the inclusion $j:Y \hookrightarrow {\cal E}$
is continuous. Given a polynomial $P$, there is a constant $C > 0$ with $M(P,r) \le Ce^r$ for all $r>0$.
It follows that
$$
\|P\| = \sup_{r > 0} {M(P,r) \over \psi (r)} \le \sup_{r > 0} {M(P,r) \over e^{2r}} \le \sup_{r > 0} {C \over e^r} = C < +\infty .
$$
Therefore $\{$polynomials$\} \subset Y$, so $Y$ is dense in $\cal E$.
Thus, $Y$ is not closed in $\cal E$ because $Y \ne {\cal E}$. Indeed, the Weierstrass interpolation theorem furnishes
a function $f \in {\cal E}$ with $f(n) = n \, \psi (n)$ for all $n \ge 1$, and plainly $f \notin Y$. Theorem \ref{Kitson-Timoney}
applies with $X = {\cal E}$, $Z_n = Y$ and $T_n = j$ for all $n \ge 1$, so yielding the spaceability of ${\cal E}_\psi$.
In particular, ${\cal E}_\psi$ is $\mathfrak{c}$-lineable. Moreover, $B := \{$polynomials$\}$ is a dense vector subspace of $\cal E$
with ${\cal E}_\psi + B \subset {\cal E}_\psi$. To see this note that, given $f \in {\cal E}_\psi$ and a polynomial $P$ as
before, one has
\begin{equation*}
\begin{split}
\sup_{r > 0} {M(f+P,r) \over \psi (r)} &\ge \sup_{r > 0} {M(f,r)-M(P,r) \over \psi (r)} \ge \sup_{r > 0} {M(f,r)- Ce^r \over \psi (r)} \\
&\ge \sup_{r > 0} {M(f,r)\over \psi (r)}  - \sup_{r > 0} {Ce^r \over e^{2r}} \ge \sup_{r > 0} {M(f,r) \over \psi (r)} - C = +\infty .
\end{split}
\end{equation*}
Consequently,
Theorem \ref{maximal dense-lineable} entails that ${\cal E}_\psi$ is maximal dense-lineable.

\vskip .15cm

\noindent (b) A similar scheme will be used here. It is evident that the polynomials form a dense vector subspace $B$ of the separable Banach space $S_\nu$ satisfying $S_{\nu ,\,{\rm strict}} + B \subset S_{\nu ,\,{\rm strict}}$. Since the spaceability of $S_{\nu ,\,{\rm strict}}$ implies its
maximal lineability, and then Theorem \ref{maximal dense-lineable} entails its maximal dense-lineability, it is enough to show spaceability,
for which Theorem \ref{Kitson-Timoney} is invoked again: first observe that
$S_{\nu ,\,{\rm strict}} = S_\nu \setminus \bigcup_{n \ge 1} S_{\nu +{1 \over n}} = S_\nu \setminus {\rm span} \, \big(\bigcup_{n \ge 1} S_{\nu +{1 \over n}} \big)$; then take $X = {\cal E}$,
$Z_n = (S_{\nu +{1 \over n}},\| \cdot \|_{\nu +{1 \over n}})$ and $T_n =$ the inclusion
$S_{\nu +{1 \over n}} \hookrightarrow S_\nu$ $(n \ge 1)$. Since, clearly,
each polynomial is in $Y := \bigcup_{a > \nu} S_a$, we will be done as soon as we exhibit a function $f \in S_\nu \setminus Y$ (because this would imply that $Y$ is not closed). To this end, we define
$$f(z) = \sum_{k=0}^\infty {z^k \over (k+1)^{\nu + {1 \over 2}} \cdot \log (k+1)}.$$
The proof is finished.
\end{proof}

\subsection{Peano curves.}

Lineability properties of families of functions $\varphi :\R \to \R$ that are surjective in very strong senses
(for instance, satisfying $\varphi (I) = \R$ for every interval $I$, and even with stronger conditions)
have recently studied by several authors (see \cite{AGS,ArS,BPS,Gam,GMSS,GRS}).
However, all of these functions are nowhere continuous. It is then natural to ask about {\it continuous} surjections.
As Albuquerque suggests in \cite{Alb}, one can adopt an even more general point of view and ask about
continuous surjections $\R^M \to \R^N$ $(M,N \in \N )$. Following \cite{Alb}, we denote
$$
{\cal S}_{M,N} = \{f:\R^M \longrightarrow \R^N : \,f \hbox{ \ is continuous and surjective}\}.
$$

\vskip .15cm

In 1890 G.~Peano surprised the mathematical world by constructing a {\it filling space curve}, that is,
a surjective continuous map $f:[0,1] \to [0,1]^2$. From this it is not difficult to construct a
surjective continuous function $\R \to \R^2$ as an extension of $f$. This extension together with an
inductive procedure is used in \cite{Alb} to show that ${\cal S}_{M,N} \ne \emptyset$ for every pair $(M,N)$.
Finally, by employing appropriate compositions, it is proved in \cite{Alb} that {\it each family \,${\cal S}_{M,N}$
\,is $\mathfrak c$-lineable.}

\vskip .15cm

We will improve here this result by adding topological properties. For this, we
consider the separable Fr\'echet space $C(\R^M, \R^N)$ of all continuous functions $\R^M \to \R^N$ under the compact-open topology.
By the Hahn--Mazurkiewicz theorem (see for instance \cite{HoY}), for every metrizable compact connected locally connected
topological space $X$ there is a continuous surjective mapping $[0,1] \to X$. In particular, if $I_N$ denotes the
$N$-cube $I_N=[0,1]^N$, there exists a continuous mapping $\varphi :[0,1] \to I_N$ with $\varphi ([0,1])=I_N$.
Therefore, the mapping
$$\Phi : (x_1, \dots ,x_M) \in S_0 \mapsto \varphi (x_1) \in I_N \eqno (3)$$
is continuous and satisfies $\Phi (S_0) = I_N$, where $S_0$ denotes the ``strip''
$S_0 = \{(x_1,\dots ,x_M) \in \R^M: \, 0 \le x_1 \le 1\} = [0,1] \times \R^{M-1}$,
meaning $S_0 = [0,1]$ if $M=1$.

\vskip .15cm

With the following theorem we conclude this paper. But before stating it, let us introduce a new family
that is smaller than ${\cal S}_{M,N}$. We denote
$$
{\cal S}_{M,N,\infty} = \{f \in C(\R^M, \R^N): \,f^{-1}(\{y\}) \hbox{ \ is } \hbox{unbounded for every } y \in \R^N\}.
$$
\begin{theorem} \label{Peano}
For each pair $(M,N)$ of natural numbers, the set \,${\cal S}_{M,N,\infty}$
\,{\rm (}hence the set \,${\cal S}_{M,N}${\rm )}
is maximal dense-lineable and spaceable in \,$C(\R^M, \R^N)$.
\end{theorem}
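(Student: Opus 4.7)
The plan is to establish spaceability first, and then to bootstrap maximal dense-lineability from it via Theorem \ref{maximal dense-lineable}. Since $C(\R^M,\R^N)$ is a separable Fr\'echet space of dimension $\mathfrak{c}$, every infinite-dimensional closed subspace has dimension $\mathfrak{c}$ by Baire's theorem, so it will suffice to exhibit a closed $\mathfrak{c}$-dimensional subspace contained in ${\cal S}_{M,N,\infty}\cup\{0\}$.

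For the construction, apply the Hahn--Mazurkiewicz theorem to fix, for each $k\ge 1$, a continuous surjection $\psi_k:[0,1]\to [-k,k]^N$ with $\psi_k(0)=\psi_k(1)=0$ (the boundary condition is easily arranged by pre- and post-composing a Peano map with affine paths back to the origin and rescaling). Partition $\N$ into pairwise disjoint infinite sets $E_1,E_2,\ldots$ and define $\tilde f_n:\R\to\R^N$ by $\tilde f_n(t)=\psi_k(t-k)$ for $t\in[k,k+1]$ with $k\in E_n$, and $\tilde f_n(t)=0$ otherwise; then set $f_n(x_1,\ldots,x_M):=\tilde f_n(x_1)$. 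Continuity of $f_n$ holds thanks to the vanishing of $\psi_k$ at the endpoints, and the supports $\sigma(f_n)\subset\bigcup_{k\in E_n}[k,k+1]\times\R^{M-1}$ are pairwise disjoint. Each $f_n$ belongs to ${\cal S}_{M,N,\infty}$: for $y\neq 0$, infinitely many $k\in E_n$ satisfy $k\ge\|y\|_\infty$, whence $\psi_k^{-1}(\{y\})\neq\emptyset$ and one obtains points of $f_n^{-1}(\{y\})$ with $x_1$ arbitrarily large; and $f_n^{-1}(\{0\})$ contains the unbounded set $(-\infty,1)\times\R^{M-1}$.

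The central step is to identify $M:=\overline{\mathrm{span}}\{f_n:n\ge 1\}$ with the set of all pointwise sums $F=\sum_{n\ge 1}c_nf_n$ for arbitrary $(c_n)\in\R^\N$. Such a sum is well-defined because at most one term is nonzero at each point, it is continuous thanks to the boundary-vanishing condition on the $\psi_k$'s, and its partial sums converge in the compact-open topology since only finitely many blocks meet any fixed compact set. Conversely, every $F\in M$ is a compact-open limit of finite linear combinations, and its restriction to $[k,k+1]\times\R^{M-1}$ with $k\in E_n$ must take the form $c_n\psi_k(x_1-k)$ for a unique scalar $c_n$ not depending on $k$: the non-vanishing of $\psi_k$ forces the sequence of coefficients of $f_n$ in the approximating combinations to converge, and the same limit must work on every block associated to $f_n$. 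Hence $M$ is linearly isomorphic to $\R^\N$ and $\dim(M)=\mathfrak{c}$. If $F\in M\setminus\{0\}$ and $c_N\neq 0$, then on $[k,k+1]\times\R^{M-1}$ with $k\in E_N$ the image of $F$ contains $c_N\psi_k([0,1])\supset c_N[-k,k]^N$; as $k\in E_N$ tends to infinity, $F$ surjects onto $\R^N$ and every preimage $F^{-1}(\{y\})$ with $y\neq 0$ contains points with $x_1$ arbitrarily large, while $F^{-1}(\{0\})\supset(-\infty,1)\times\R^{M-1}$ is unbounded. Thus $M\setminus\{0\}\subset{\cal S}_{M,N,\infty}$, which yields both spaceability and maximal lineability.

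To upgrade to maximal dense-lineability, set $A:={\cal S}_{M,N,\infty}$ and let $B$ denote the vector space of continuous functions $\R^M\to\R^N$ with compact support. Then $B$ is a dense vector subspace of $C(\R^M,\R^N)$ (e.g.~by multiplication by a sequence of cutoffs), $A\cap B=\emptyset$ (for every $g\in B$ and every $y\neq 0$, $g^{-1}(\{y\})\subset\sigma(g)$ is bounded), and $A+B\subset A$ because $(f+g)^{-1}(\{y\})\supset f^{-1}(\{y\})\setminus\sigma(g)$ stays unbounded whenever $f^{-1}(\{y\})$ is unbounded and $\sigma(g)$ is compact. Theorem \ref{maximal dense-lineable}(c), applied in the metrizable Fr\'echet space $C(\R^M,\R^N)$, then upgrades the maximal lineability of $A$ to maximal dense-lineability. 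The main technical obstacle is precisely the verification in the previous paragraph that the entire closed span $M$, and not merely the finite linear combinations of the $f_n$, remains inside ${\cal S}_{M,N,\infty}\cup\{0\}$; the disjoint-support architecture combined with the boundary-vanishing blocks $\psi_k$ is what forces every nonzero element of $M$ to decompose canonically as a sum $\sum c_n f_n$ for which some $c_N\neq 0$, and this in turn produces surjectivity together with unbounded fibres.
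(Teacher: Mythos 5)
Your proof is correct, and it genuinely diverges from the paper's in the spaceability half. The dense-lineability half is essentially identical to the paper's: there too one takes $B$ to be the continuous functions with bounded support, checks ${\cal S}_{M,N,\infty}+B\subset {\cal S}_{M,N,\infty}$ via the same fibre argument $(f+g)^{-1}(\{y\})\supset f^{-1}(\{y\})\setminus\sigma(g)$, and invokes Theorem \ref{maximal dense-lineable}. For spaceability, however, the paper builds a comparable family of block functions (Tietze extensions of Peano maps on strips $p(n,m,j)+S_0$, vanishing on the strip boundaries and covering translated unit cubes $a_j+I_N$) and then feeds them into the general criterion of Theorem \ref{BerOrd}: Nikolskii's theorem makes $(f_n)$ a basic sequence, and hypotheses (i)--(v) --- in particular the delicate condition (iv), that $f+g\in S$ with $\overline{\sigma(f)}\cap\sigma(g)=\emptyset$ forces $f\in S$, whose verification needs a separate boundary argument for the fibre over $0$ --- are what transfer membership in ${\cal S}_{M,N,\infty}$ from the generators to every nonzero element of the closed span. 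You instead exploit the local finiteness of your block decomposition (only finitely many blocks meet a given compact set, and each block vanishes on its boundary) to compute the closed span exactly as $\{\sum_n c_nf_n:\,(c_n)\in\R^{\N}\}$, and then read off surjectivity and unbounded fibres directly from any single nonzero coefficient. This is more elementary and self-contained: it avoids the basic-sequence machinery entirely and yields dimension $\mathfrak{c}$ by exhibiting an isomorphic copy of $\R^{\N}$ rather than by Baire. What the paper's route buys is reusability --- Theorem \ref{BerOrd} with the closure operator ${\cal S}(A)=\overline{A}$ is a general-purpose tool whose condition (iv) does abstractly the work that your explicit series representation does concretely here. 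One cosmetic remark: you overload the letters $M$ and $N$ (already the dimensions of domain and range) to denote the closed span and the index of a nonzero coefficient; rename these to avoid confusion.
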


\begin{proof}
We make use of the well-known fact that the set \,$\cal P$ \,of functions $P=(P_1,\dots ,P_N):\R^M \to \R^N$ whose components
$P_1, \dots ,P_N$ are polynomials of $M$ variables is dense in \,$C(\R^M, \R^N)$. Fix $k \in \N$ and $P=(P_1,\dots ,P_N)$
as before. By Tietze's extension theorem (alternatively, a direct construction is not difficult) we obtain (and fix)
continuous functions $P_1[k],\dots ,P_N[k]:\R^M \to \R$ such that $P_j[k]=P_j$ on $B_k := \{(x_1, \dots ,x_M): \, x_1^2+\cdots+x_M^2 \le k\}$
and $P_j[k] = 0$ on $\R^M \setminus B_{k+1}^0$. Let denote \,$P[k] = (P_1[k], \dots ,P_N[k])$. Since each compact set $K \subset \R^M$ is
contained in some $B_k$ and the topology of \,$C(\R^M, \R^N)$ \,is that of uniform converge on compacta,
we have that the set \,${\cal P}_0 := \{P[k]: \, P \in {\cal P}, \, k \in \N\}$ \,is dense in \,$C(\R^M, \R^N)$.

\vskip .15cm

Suppose that we have already proved the spaceability of \,${\cal S}_{M,N,\infty}$. Then this set is $\mathfrak c$-lineable
because \,$C(\R^M, \R^N)$ \,is a separable infinite-dimensional F-space. Consider the set $B$ of continuous functions
$f:\R^M \to \R^N$ with bounded support $\sigma (f)$ (see (1)). On the one hand, $B$ is a dense vector subspace of \,$C(\R^M, \R^N)$.
On the other hand, ${\cal S}_{M,N,\infty} + B \subset {\cal S}_{M,N,\infty}$: indeed, if $f^{-1}(\{y\})$
is unbounded and $g \in B$ then $(f+g)^{-1}(\{y\}) \supset f^{-1}(\{y\}) \setminus \sigma (g)$, and the last set is still unbounded because
$\sigma (g)$ is bounded.
An application of Theorem \ref{maximal dense-lineable} yields the maximal dense-lineability of \,${\cal S}_{M,N,\infty}$.

\vskip .15cm

Consequently, our only task is to show the spaceability of \,${\cal S}_{M,N,\infty}$.
For this, we will use Theorem \ref{BerOrd} with $\Omega = \R^M$, ${\cal S}(A) = \ovl{A}$
(i.e.~${\cal S}(A)$ is the closure of $A$ in $\R^M$, so that ${\cal S}(\sigma (h)) = \ovl{\sigma (h)}$, the {\it topological support}
of a function $h:\R^M \to \R^N$), $X = C(\R^M, \R^N)$, $\K = \R$, $Z = \R^N$,
$S = C(\R^M, \R^N) \setminus {\cal S}_{M,N,\infty}=
\{f \in C(\R^M, \R^N): \,f^{-1}(\{y\})$ is bounded for some $y \in \R^N\}$
(we agree that $\emptyset$ is bounded), and
$$
\|f\| = \sum_{k=1}^\infty {1 \over 2^k} {\sup_{x \in B_k}\|f(x)\|_2  \over 1 + \sup_{x \in B_k} \|f(x)\|_2},
$$
where $\| \cdot \|_2$ denotes Euclidean norm in $\R^N$. Let us check conditions (i) to (v)
in Theorem \ref{BerOrd}:
\begin{enumerate}
\item[$\bullet$] (i) holds because uniform convergence on compacta implies pointwise convergence.
\item[$\bullet$] (ii) is true (with $C=1$) since the map $t \in [0,+\infty ) \mapsto {x \over 1+x} \in [0,+\infty )$
                  is increasing and $\|f(x) + g(x)\|_2  \ge \|f(x)\|_2$ for
                  all $x \in \R^N$ whenever $\sigma (f) \cap \sigma (g) = \emptyset$. Here $\sigma (h)$ denotes the support of $h$ as defined in (1).
\item[$\bullet$] (iii) is satisfied because $\al f = 0$ if $\al = 0$ and, provided that $\al \ne 0$, then $(\al f)^{-1}(\{y\}) = f^{-1}(\{\al^{-1} y\})$ for all $y \in \R^N$.
\item[$\bullet$] Assume that $f,g \in X$ and $\ovl{\sigma (f)} \cap \sigma (g) = \emptyset$. Then, in particular,
$\sigma (f) \cap \sigma (g) = \emptyset$, from which it follows that
$(f+g)^{-1}(\{y\}) = f^{-1}(\{y\}) \cup g^{-1}(\{y\})$ for all $y \in \R^N \setminus \{0\}$. Suppose that $f+g \in S$.
Then either there is $y \in \R^N \setminus \{0\}$ such that $(f+g)^{-1}(\{y\})$ is bounded, or $(f+g)^{-1}(\{y\})$ is unbounded
for all $y \ne 0$ but $(f+g)^{-1}(\{0\})$ is bounded.
In the first case, the last set identity forces $f^{-1}(\{y\})$ to be bounded, so $f \in S$.
Assume now that $(f+g)^{-1}(\{y\})$ is unbounded for all $y \ne 0$ but $(f+g)^{-1}(\{0\})$ is bounded.
We can suppose that $f^{-1}(\{y\})$ is unbounded for all $y \ne 0$ (otherwise, $f \in S$ and we would be done).
Therefore $\sigma (f)$ is unbounded. Let us prove that $f^{-1}(\{0\})$ is bounded (in which case $f \in S$).
By way of contradiction, assume that $f^{-1}(\{0\})$ is unbounded. Then $\partial f^{-1}(\{0\})$ is also unbounded
[indeed, if $\partial f^{-1}(\{0\})$ is bounded then there is $\al > 0$ such that $f(x) \ne 0$ for all $x$ with
$\|x\|_2 > \al$ due to the unboundedness of $\sigma (f)$ and the closedness of $f^{-1}(\{0\})$;
hence $f^{-1}(\{0\})$ would be bounded, which is absurd]. Now, we have:
$\partial f^{-1}(\{0\}) = \partial (\R^M \setminus f^{-1}(\{0\})) = \partial \sigma (f) \subset \ovl{\sigma (f)}
\subset \R^M \setminus \sigma (g) = g^{-1}(\{0\})$. We derive that if $x \in \partial f^{-1}(\{0\})$ then (since $f^{-1}(\{0\})$
is closed) $f(x) = 0 = g(x)$, so $(f+g)(x) = 0$. Therefore
$\partial f^{-1}(\{0\}) \subset  (f+g)^{-1}(\{0\})$, so $(f+g)^{-1}(\{0\})$ is also unbounded, which contradicts our assumption.
This yields (iv).
\item[$\bullet$] The idea underlying the proof of (v) is to construct continuous functions by shifting and scaling appropriately the function $\Phi$ given in (3).
Firstly, it is plain that there is sequence of points $(a_j) \subset \R^N$ satisfying $\R^N = \bigcup_{j \ge 1} (a_j+I_N)$.
For each $k \in \N_0$ and each $a \in \R^N$ we consider the mapping
$$\Phi_{k,a}:\left(\{k,k+1\} \cup \left[k+{1 \over 3},k+{2 \over 3}\right] \right) \times \R^{M-1} \to \R^N$$
given by $\Phi_{k,a} = 0$ on $\{k,k+1\} \times \R^{M-1} = \partial (k+S_0)$ and $\Phi_{k,a}(x_1, \dots ,x_M) = a + \varphi (3(x_1-k)-1)$
if $(x_1, \dots ,x_M) \in [k+{1 \over 3},k+{2 \over 3}] \times \R^{M-1}$. Tietze's extension theorem comes in our help to provide a continuous extension
$\Phi_{k,a}:k+S_0 \to \R^N$ (observe that Tietze's theorem can be applied to each component of $\Phi_{k,a}$). Note that $\Phi_{k,a} (k+S_0) \supset a + I_N$ for all $k \ge 0$. Since ${\rm card}\,(\N^3) = {\rm card}\,(\N )$, we can select $\N^2$-many pairwise disjoint
sequences $\{p(n,m,1) < p(n,m,2) < \cdots < p(n,m,j) < \cdots\}$ $(n,m \in \N )$ of natural numbers. For each $n \in \N$, define $f_n:\R^M \to \R^N$ by
$$
f_n(x) = \left\{
\begin{array}{cl}
\Phi_{p(n,m,j),a_j}(x) & \mbox{if } x \in p(n,m,j)+S_0 \,\,\, (m,j \in \N )\\
0 & \mbox{otherwise},
\end{array}
\right.
$$
Since $f_n = 0$ on each boundary $\partial (p(n,m,j)+S_0)$, we have that each $f_n$ is well defined and continuous.
Furthermore, for every $n \in \N$ and every $y \in \R^N \big( = \bigcup_{j \ge 1} (a_j+I_N) \big)$,
the set $f_n^{-1}(\{y\})$ possesses at least one point in every set
$\bigcup_{j \ge 1} (p(n,m,j)+S_0)$ $(m=1,2, \dots )$, so $f_n^{-1}(\{y\})$ is unbounded and $f_n \in {\cal S}_{M,N,\infty}$.
Finally, the supports of the functions $f_n$ $(n=1,2, \dots )$ satisfy $\ovl{\sigma (f_k)} \cap \sigma (f_n) = \emptyset$
for all $k \ne n$, because
$\sigma (f_n) \subset \bigcup_{m,j \ge 1}(p(n,m,j)+S_0^0)$ \,and the numbers $p(n,m,j)$ are pairwise different.
\end{enumerate}
This had to be shown.
\end{proof}

Theorem \ref{Peano} is best possible in terms of dimension because, as noticed in \cite{Alb},
there is no surjective continuous function $\R \to \R^\N$ (see \cite{Mun}), $\R^\N$ being the
space of real sequences endowed with the product topology.

\vskip .25cm

\noindent {\bf Acknowledgements.} The authors have been partially supported by the Plan
Andaluz de Investigaci\'on de la Junta de Andaluc\'{\i}a FQM-127
Grant P08-FQM-03543 and by MEC Grant MTM2012-34847-C02-01.

\bigskip

{\footnotesize  

} 


\begin{thebibliography}{ZZZ}
\bibitem{Ada} {C.R.~Adams}, {The space of functions of bounded variation and certain ge\-ne\-ral spaces},
{Trans. Amer. Math. Soc.} {40} (1936) 421--438.

\bibitem{AkM} {I.~Akbarbaglu, S.~Maghsoudi}, {Large structures in certain subsets of
Orlicz spaces}, {Linear Algebra Appl.} {438} (2013) 4363--4373.

\bibitem{Alb} {N.G.~Albuquerque}, {Maximal lineability of the set of continuous surjections},
Preprint 2013, available at arXiv:1304.3834v1 [math.FA].

\bibitem{Arm} {D.H.~Armitage}, {Entire functions decaying rapidly on strips},
{Quaestiones Math.} {23} (2000) 417--424.

\bibitem{AGM} {R.~Aron, D.~Garc\'{\i}a, M.~Maestre}, {Linearity in
non-linear problems}, {Rev. R. Acad. Cien. Ser. A Mat.} {95} (2001) 7--12.

\bibitem{AGPS} {R.~Aron, F.J.~Garc\'{\i}a-Pacheco,
D.~P\'erez-Garc\'{\i}a {and}
J.B.~Seoane-Sep\'ulveda}, {On dense-lineability of sets of
functions on $\R$}, {Topology} {48} (2009) 149--156.

\bibitem{AGS} {R.M.~Aron, V.I.~Gurariy,
J.B.~Seoane-Sep\'ulveda}, {Lineability and
spa\-cea\-bi\-li\-ty of sets of functions on $\R$},
{Proc. Amer. Math. Soc.} {133} (2005) 795--803.

\bibitem{APS} {R.~Aron, F.J.~Garc\'{\i}a-Pacheco,
J.B.~Seoane-Sep\'ulveda}, {Algebrability of the set of nonconvergent Fourier series},
{Studia Math.} {175} (2006) 83--90.

\bibitem{ArS} {R.~Aron, J.B.~Seoane-Sep\'ulveda}, {Algebrability of the set of
everywhere surjective functions on $\C$}, {Bull. Belg. Math. Soc. Simon Stevin}
{14} (2007) 25--31.    

\bibitem{BBF} {M.~Balcerzak, A.~Bartoszewicz, M.~Filipczak}, {Nonseparable spaceability
and strong algebrability of sets of continuous singular functions}, {J. Math. Anal. Appl.}
{407} (2013) 263--269.            

\bibitem{BBG} {T.~Banakh, A.~Bartoszewicz, S.~Glab}, {Algebraic and topological pro\-per\-ties of
some sets in $\ell_1$}, {Colloq.~Math.} {129} (2012) 75--85.

\bibitem{Bay1} {F.~Bayart}, {Topological and algebraic genericity of
divergence and univer\-sa\-li\-ty}, {Studia Math.} {167} (2005)
161--181.

\bibitem{Bay2} {F.~Bayart}, {Linearity of sets of strange functions},
{Michigan Math. J.} {53} (2005) 291--303.

\bibitem{BayG} {F. Bayart, S. Grivaux}, {Frequently hypercyclic operators},
{Trans. Amer. Math. Soc.} {358} (2006) 5083--5117.

\bibitem{BaM} {F.~Bayart, E.~Matheron}, {Dynamics of Linear Operators},
Cambridge Tracts in Ma\-the\-ma\-tics, Cambridge University Press, 2009.

\bibitem{BerA0} {L. Bernal-Gonz\'alez}, A lot of ``counterexamples'' to
Liouville's theorem, {J. Math. Anal. Appl.} {201} (1996) 1002--1009.

\bibitem{BerA00} {L. Bernal-Gonz\'alez}, Small entire functions with extremely fast
growth, {J. Math. Anal. Appl.} {207} (1997) 541--548.

\bibitem{BerA} {L. Bernal-Gonz\'alez}, Densely hereditarily hypercyclic sequences
and large hypercyclic ma\-ni\-folds, {Proc. Amer. Math. Soc.} {127} (1999) 3279--3285.

\bibitem{BerU} {L. Bernal-Gonz\'alez}, {Universal images of universal elements},
{Studia Math.} {138} (2000) 241--250.

\bibitem{BerS} {L. Bernal-Gonz\'alez}, {Linear Kierst--Szpilrajn theorems}, {Studia
Math.} {166} (2005) 55--69.

\bibitem{BerB} {L. Bernal-Gonz\'alez}, {Linear structure of weighted holomorphic
non-extendibility}, {Bull. Austral. Math. Soc.} {73} (2006) 335--344.

\bibitem{Ber} {L. Bernal-Gonz\'alez}, {Dense-lineability
in spaces of continuous functions}, {Proc. Amer. Math. Soc.}
{136} (2008) 3163--3169.

\bibitem{Ber1} {L. Bernal-Gonz\'alez}, {Lineability of sets of nowhere analytic functions},
{J. Math. Anal. Appl} {340} (2008) 1284--1295.

\bibitem{Ber2} {L. Bernal-Gonz\'alez}, {Algebraic genericity of strict
order integrability}, {Studia Math.} {199} (2010) 279--293.

\bibitem{Ber3} {L. Bernal-Gonz\'alez}, {Lineability of universal divergence of Fourier series},
{Integr. Equ. Oper. Theory} {74} (2012) 271--279.

\bibitem{BeB} {L. Bernal-Gonz\'alez, A. Bonilla}, {Families of strongly annular functions: linear structure},
{Rev. Mat. Complut.} {26} (2013) 283--297.

\bibitem{BCL} {L. Bernal-Gonz\'alez, M.C. Calder\'on-Moreno, W. Luh}, {Large linear ma\-ni\-folds of non-continuable
boundary-regular holomorphic functions}, {J. Math. Anal. Appl.} {341} (2008) 337--345.

\bibitem{BCL2} {L. Bernal-Gonz\'alez, M.C. Calder\'on-Moreno, W. Luh},
{Dense-lineability of sets of Birkhoff-universal
functions with rapid decay}, {J. Math. Anal. Appl.} {363} (2010) 327--335.

\bibitem{BCP} {L. Bernal-Gonz\'alez, M.C. Calder\'on-Moreno, J.A. Pra\-do-Bassas},
{Cyclicity of coefficient multipliers: Linear structure}, {Acta Math. Hungar.}
{114} (2007) 287--300.


\bibitem{BeO} {L. Bernal-Gonz\'alez, M. Ord\'o\~{n}ez Cabrera},
{Spaceability of strict order integrability},
{J. Math. Anal. Appl.} {385} (2012) 303--309.

\bibitem{BPS} {L. Bernal-Gonz\'alez, D. Pellegrino, J.B. Seoane-Sep\'ulveda},
{Linear subsets of nonlinear sets in topological vector spaces}, {Bull. Amer. Math. Soc.},
to appear.

\bibitem{Bes} {J. B\`es}, {Invariant manifolds of hypercyclic vectors for the real
scalar case}, {Proc. Amer. Math. Soc.} {127} (1999) 1801--1804.

\bibitem{Bes2} {J. B\`es}, {Dynamics of composition operators with
holomorphic symbol}, {Rev. Real Acad. Cien. Ser. A Mat.}, {to appear}.

\bibitem{BesC} {J. Bès, J.A. Conejero}, {Hypercyclic subspaces in omega}, {J. Math. Anal.
Appl.} 316 (2006) 16--23.

\bibitem{BoP} {J. Bonet, A. Peris}, Hypercyclic operators on
non-normable Fr\'echet spaces. {J. Funct. Anal.}
{159} (1998) 587--596.

\bibitem{Bon} {A.~Bonilla}, {Small entire functions with infinite
growth index}, {J. Math. Anal. Appl.} {267} (2002) 400--404.

\bibitem{BoG0} {A. Bonilla {and} K.G. Grosse-Erdmann}, {On a theorem of
Godefroy and Shapiro}, {Integr. Equ. Oper. Theory}
{56} (2006) 151--162.

\bibitem{BoG} {A. Bonilla, K.G. Grosse-Erdmann}, {Frequently hypercyclic
subspaces}, {Monatshefte Math.} {168} (2012) 305--320.

\bibitem{BCFP} {G.~Botelho, D.~Cariello, V.V.~F\'avaro, D.~Pellegrino},
{Maximal space\-a\-bi\-li\-ty in sequence spaces},
{Linear Algebra Appl.} {437} (2012) 2978--2985.

\bibitem{BCFPS} {G.~Botelho, D.~Cariello, V.V.~F\'avaro, D.~Pellegrino, J.B.~Seoane-Sep\'ulveda},
{Distinguished subspaces of $L_p$ of maximal dimension}, {Studia Math.}, to appear.

\bibitem{BDFP} {G.~Botelho, D.~Diniz, V.V.~F\'avaro, D.~Pellegrino}, {Spaceability
in Banach and quasi-Banach sequence spaces},
{Linear Algebra Appl.} {434} (2011) 1255--1260.

\bibitem{BFPS} {G.~Botelho, V.V.~F\'avaro, D.~Pellegrino, J.B.~Seoane-Sep\'ulveda}, {$L_p [0, 1] \setminus \bigcup_{q>p} L_q [0, 1]$
is spaceable for every $p>0$}, {Linear Algebra Appl.} {436} (2012) 2963--2965.

\bibitem{Bou} {P.S.~Bourdon}, {Invariant manifolds of hypercyclic operators},
{Proc. Amer. Math. Soc.} {118} (1993) 845--847.

\bibitem{Cal} {M.C.~Calderón-Moreno}, Universal functions with small derivatives and extremely fast growth,
{Analysis} 22 (2002) 57--66.

\bibitem{CaS} {D.~Cariello, J.B.~Seoane-Sep\'ulveda}, {Basic sequences and
spaceability in $\ell_p$ spaces}, Preprint 2013, available at arXiv:submit/0755711 [math.FA].

\bibitem{Cat} {F.S. Cater}, {Differentiable, nowhere analytic functions},
{Amer. Math. Monthly} {91} (1984) 618--624.

\bibitem{Chm} {J.~Chmielowski}, {Domains of holomorphy of type $A^k$},
{Proc.~Royal Irish Acad.} {80-A} (1980), 97--101.

\bibitem{CoMc} {C.C.~Cowen, B.D.~MacCluer}, {Composition
operators on spaces of analytic functions}, {CRC Press}, Boca
Raton, 1995.

\bibitem{FGK} {V.~Fonf, V.I.~Gurariy, V.~Kadec}, {An infinite dimensional
subspace of $C[0,1]$, consisting of nowhere differentiable
functions}, {C.R. Acad. Bulgare Sci.} {52} (1999) 13--16.

\bibitem{Gam} {J.L. G\'{a}mez-Merino}, {Large algebraic structures inside the set of surjective functions},
{Bull. Belg. Math. Soc. Simon Stevin} {18} (2011) 297--300. 

\bibitem{GMSS} {J.L. G\'{a}mez-Merino, G.A. Mu\~{n}oz-Fern\'{a}ndez, V.M. S\'{a}nchez, V.M., J.B. Seoane-Sep\'{u}lveda},
{Sierpi\'nski-Zygmund functions and other problems on lineability},
{Proc. Amer. Math. Soc.} {138} (2010)
3863--3876.  

\bibitem{GaMS} {J.L. G\'{a}mez-Merino, G.A. Mu\~{n}oz-Fern\'{a}ndez, J.B. Seoane-Sep\'{u}lveda},
{Lineability and additivity in $\Bbb R^{\Bbb R}$}, {J. Math. Anal. Appl.} {369} (2010) {265--272}.  

\bibitem{GGMS} {D. Garc\'{\i}a, B.C. Grecu, M. Maestre and J.B. Seoane-Sep\'ulveda},
{Infinite dimensional Banach spaces of functions with nonlinear properties},
{Math. Nachr.} {283} (2010) 712--720.

\bibitem{GMS} {F.J.~Garc\'{\i}a-Pacheco, M.~Mart\'{\i}n, J.B.~Seoane-Sep\'ulveda},
{Li\-nea\-bi\-li\-ty, spaceability, and algebrability of certain subsets of function spaces},
{Taiwanese J. Math.} {13} (2009) 1257--1269.

\bibitem{GPS} {F.J.~Garc\'{\i}a-Pacheco, C.~P\'erez-Eslava, J.B.~Seoane-Sep\'ulveda},
{Mo\-du\-lea\-bi\-li\-ty, algebraic structures, and nonlinear properties},
{J. Math. Anal. Appl.} {370} (2010) 159--167.

\bibitem{GRS} {F.J. Garc{\'{\i }}a-Pacheco, F. Rambla-Barreno, J.B. Seoane-Sep{\'u}lveda},
{Q-linear functions, functions with dense graph, and everywhere surjectivity},
{Math. Scand.} {102} (2008) 156--160.  

\bibitem{GKP1} {S.~Glab, P.L.~Kaufmann, L.~Pellegrini}, {Spaceability and algebrability of sets of
nowhere integrable functions}, {Proc. Amer. Math. Soc.} {141} (2013) 2025--2037.

\bibitem{GKP2} {S.~Glab, P.L.~Kaufmann, L.~Pellegrini}, {Large structure made of nowhere
$L^p$ functions}, Preprint 2012, available at arXiv:1207.3818v2 [math.FA].

\bibitem{GoS} {G. Godefroy, J.H. Shapiro}, {Operators with dense, invariant,
cyclic vectors manifolds}, {J. Funct. Anal.} {98} (1991) 229--269.

\bibitem{GrP} {K.G.~Grosse-Erdmann, A.~Peris},
{Linear Chaos}, Springer, New York, 2011.

\bibitem{GuQ} {V.~Gurariy, L.~Quarta}, {On lineability of
sets of continuous functions}, {J.~Math.~Anal.~Appl.} {294} (2004) 62--72.

\bibitem{Her} {D.A. Herrero}, Limits of hypercyclic and supercyclic operators,
{J. Funct. Anal.} {99} (1991) 179--190.

\bibitem{HoY} {J.G.~Hocking, G.S.~Young}, {Topology}, Dover, New York, 1988.


\bibitem{Jim} {P.~Jim\'enez-Rodr\'{\i}guez}, {$c_0$ is isometrically isomorphic to a
subspace of Cantor--Lebesgue functions}, {J.~Math.~Anal.~Appl.} {472} (2013) 567--570.

\bibitem{JMS-a} {P. Jim\'{e}nez-Rodr\'iguez, G.A. Mu\~{n}oz-Fern\'{a}ndez, J.B. Seoane-Sep\'{u}lveda},
{Non-Lipschitz functions with bounded gradient and related problems},
{Linear Algebra Appl.} {437} (2012) 1174--1181.

\bibitem{JMS} {P. Jim\'{e}nez-Rodr\'iguez, G.A. Mu\~{n}oz-Fern\'{a}ndez, J.B. Seoane-Sep\'{u}lveda},
{On Weierstrass' Monsters and lineability}, {Bull. Belg. Math. Soc. Simon Stevin}, {to appear}.

\bibitem{Kah} {J.P. Kahane}, {Baire's category theorem and trigonometric series}, {J. Analyse Math.}
{80} (2000) 143--182.

\bibitem{KaG} {P.K.~Kamtan, M.~Gupta}, {Theory of bases and cones}, Pitman, Boston, 1985.

\bibitem{KiS} {St. Kierst, D. Szpilrajn}, {Sur certaines singularit\'es des fonctions analytiques uniformes},
{Fundamenta Math.} {21} (1933) 276--294.

\bibitem{KitT} {D.~Kitson, M.~Timoney}, {Operator ranges and spaceability},
{J.~Math.~Anal.~Appl.} {378} (2011) 680--686.

\bibitem{MeN} {A. Melas, V. Nestoridis}, {On various types of universal
Taylor series}, {Complex Va\-ria\-bles} {44} (2001) 245--258.

\bibitem{Men1} {Q.~Menet}, {Hypercyclic subspaces and weighted shifts}, {Preprint} 2012, avai\-la\-ble at
arXiv:1208.4963v1 [math.FA].

\bibitem{Men2} {Q.~Menet}, {Hypercyclic subspaces on Fr\'echet spaces without continuous norm}, {Preprint} 2013, avai\-la\-ble at
arXiv:1302.6447v1 [math.DS].

\bibitem{Mon} {A.~Montes-Rodr\'{\i}guez}, {Banach spaces of hypercyclic vectors},
{Michigan Math. J.} {43} (1996) 419--436.

\bibitem{Mul} {J. M\"uller}, {Continuous functions with universally divergent
Fourier series on small subsets of the circle},
{C.R. Acad. Sci. Paris Ser. I} {348} (2010) 1155--1158.

\bibitem{Mun} {J.R. Munkres}, {Topology}, 2nd ed., Prentice Hall, Upper Saddle River, NJ, 2000.

\bibitem{MPPS} {G.A.~Mu\~{n}oz-Fern\'andez, N.~Palmberg,
D.~Puglisi,
J.B.~Seoane-Sep\'ulveda}, {Li\-ne\-a\-bi\-li\-ty in subsets of measure and
function spaces}, {Li\-near Algebra Appl.} {428} (2008)
2805--2812.

\bibitem{Nat1} {T. Natkaniec}, {Almost continuity},
{Real Anal. Exchange} {17} (1991-92) 462--520.

\bibitem{Nat2} {T. Natkaniec}, {New cardinal invariants in real analysis},
{Bull. Polish Acad. Sci. Math.} 44 (1996) 251--256.

\bibitem{Nie} {O.A.~Nielsen}, {An Introduction to Integration
and Measure Theory}, John Wiley and Sons, New York, 1997.

\bibitem{Rom} {J.L.~Romero}, {When is $L^p(\mu )$ contained in $L^q(\mu )$?},
{Amer. Math. Monthly} {90} (1983) 203--206.

\bibitem{RudA} {W. Rudin}, Real and complex analysis, McGraw-Hill Book Co., New York, 1987.

\bibitem{Rud} {W. Rudin}, Functional analysis, 2nd edition, McGraw-Hill, New York, 1991.

\bibitem{Sub} {B.~Subramanian}, {On the inclusion $L^p(\mu ) \subset L^q(\mu )$},
{Amer. Math. Monthly} {85} (1978) 479--481.

\bibitem{Val} {M.~Valdivia}, {The space ${\cal H}(\Omega ,(z_j))$ of holomorphic
functions}, {J. Math. Anal. Appl.} {337} (2008)
 821--839.

\bibitem{Val2} {M. Valdivia}, {Spaces of holomorphic functions in regular domains},
{J. Math. Anal. Appl.} {350} (2009)
 651--662.

\bibitem{Wen} {J. Wengenroth}, {Hypercyclic operators on non-locally convex
spaces}, {Proc. Amer. Math. Soc.} {131} (2003)
 1759--1761.

\bibitem{Wil} {A.~Wilansky}, {SemiFredholm maps in FK spaces}, {Math. Z.} {144}
(1975),
 9--12.
\end{thebibliography}
\end{document}